\newcommand{\map}[1]{\xrightarrow{#1}}
\newcommand{\dlim}{\varinjlim}
\newcommand{\iso}{\cong}
\newcommand{\define}{\stackrel{\mathrm{def}}{=}}
\DeclareMathOperator{\Hom}{Hom}
\DeclareMathOperator{\Aut}{Aut}
\DeclareMathOperator{\GL}{GL}
\DeclareMathOperator{\Sym}{Sym}
\newcommand{\Q}{\mathbb Q}
\newcommand{\Z}{\mathbb Z}
\newcommand{\R}{\mathbb R}
\newcommand{\C}{\mathbb C}
\newcommand{\A}{\mathbb A}
\newcommand{\co}{\mathcal O}
\newcommand{\chern}{\mathrm{ch}}
\newcommand{\DD}{\mathcal{D}}
\DeclareMathOperator{\mynull}{null}
\begin{document}
\author{Benjamin Howard}
\title[Pullback formulas for arithmetic cycles]{Pullback formulas for arithmetic cycles on orthogonal Shimura varieties}
\date{}
\address{Department of Mathematics\\Boston College\\ 140 Commonwealth Ave. \\Chestnut Hill, MA 02467, USA}
\email{howardbe@bc.edu}

\thanks{This research was supported in part by NSF grants  DMS-2101636 and DMS-1801905.}

\begin{abstract}
On an orthogonal Shimura variety, one has a collection of  arithmetic special cycles in the Gillet-Soul\'e arithmetic Chow group.
We describe how these cycles behave under pullback to an embedded orthogonal Shimura variety of lower dimension.
The bulk of the paper is devoted to cases in which the special cycles intersect the embedded Shimura variety improperly, which requires that we analyze  logarithmic  expansions of  Green currents on the deformation to the normal bundle of the embedding.
\end{abstract}

\maketitle
%\setcounter{section}{-1}
%\setcounter{tocdepth}{1}
%\tableofcontents

\theoremstyle{plain}
\newtheorem{theorem}{Theorem}[subsection]
\newtheorem{bigtheorem}{Theorem}[section]
\newtheorem{proposition}[theorem]{Proposition}
\newtheorem{lemma}[theorem]{Lemma}
\newtheorem{corollary}[theorem]{Corollary}
\newtheorem{problem}[theorem]{Problem}

\theoremstyle{definition}
\newtheorem{definition}[theorem]{Definition}
\newtheorem{hypothesis}[theorem]{Hypothesis}

\theoremstyle{remark}
\newtheorem{remark}[theorem]{Remark}
\newtheorem{example}[theorem]{\mathcal{L}xample}
\newtheorem{question}[theorem]{Question}

\numberwithin{equation}{subsection}

\numberwithin{equation}{subsection}
\renewcommand{\thebigtheorem}{\Alph{bigtheorem}}

%%%%%%%%%%%%%%%%%%%%%%%%%%%%%%%%

\section{Introduction}

%%%%%%%%%%%%%%%%%%%%%%%%%%%%%%%%

%\subsection{Overview}

On an orthogonal Shimura variety $M$, one has a systematic supply of special cycles coming from embeddings of smaller orthogonal Shimura varieties.  These cycles are the subject of a series of conjectures of Kudla \cite{kudlaMSRI}, whose central theme is that they should be geometric analogues of the coefficients of  Siegel  theta functions.
%For example, it is known by work of   Bruinier and Raum \cite{BwR} that these cycles are the coefficients of a Siegel modular form  valued in the Chow group of $M$.

In order to do arithmetic intersection theory with these cycles, one must endow them with Green currents. 
 One  construction of such Green currents was done by Garcia-Sankaran \cite{GS},  using ideas of Bismut \cite{Bis} and Bismut-Gillet-Soul\'e \cite{BiGSa}.
 The special cycles endowed with these currents define  arithmetic special cycles in the Gillet-Soul\'e arithmetic Chow group of $M$.  The goal of this paper is to show that these arithmetic special cycles behave nicely under pullbacks via embeddings $M_0 \to M$ of smaller orthogonal Shimura varieties, in the sense that the pullback of an  arithmetic special  cycle on $M$ is a prescribed linear combination of arithmetic special  cycles on $M_0$.

When an arithmetic  cycle intersects $M_0$ properly, its pullback to  can be defined in a naive way, and is easy to compute directly from the definitions.  Unfortunately, the intersections that arise in our setting are very rarely proper.  
For improper intersections, Gillet and Soul\'e define pullbacks using the moving lemma, which is poorly suited to any kind of explicit calculation.  One doesn't have any natural choice of rationally trivial cycle with which to move the special cycle, and even if one did, replacing an arithmetic special cycle by a rationally equivalent one would destroy all the nice properties of the special cycle and Green current that one started with.

Our approach to treating improper intersections is to use the work of Hu \cite{Hu}, which gives an alternate definition of arithmetic pullbacks based on Fulton's deformation to the normal cone approach to intersection theory.  
One can always specialize a cycle on $M$ to a cycle on the normal cone to $M_0 \subset M$.
As $M_0$ is smooth, the normal cone is canonically identified with  (the total space of) the normal bundle $N_{M_0/M} \to M_0$.  
Hu shows that there is an analogous specialization of Green currents.  
The core of this paper is the calculation of the specializations   of  Garcia-Sankaran Green currents to $N_{M_0/M}$, or at least the calculation of enough of them to deduce  the pullback formula.

\subsection{Statement of the main result}

Fix a  quadratic space $V$ of dimension $n+2 \ge 3$ over  a totally real number field $F$.  Assume that $V$ has signature $(n,2)$ at one embedding $\sigma : F \to \R$, but  is positive definite at every other  embedding.

From $V$ one can construct a Shimura datum $(G,\DD)$ in which $G$ is the restriction of scalars  to $\Q$ of either  $\mathrm{SO}(V)$ or $\mathrm{GSpin}(V)$, and $\DD$ is a hermitian symmetric domain of dimension $n$.
Fixing a sufficiently small compact open subgroup $K\subset G(\A_f)$, we obtain a quasi-projective Shimura variety $M$  over the reflex field $\sigma(F) \subset \C$ with complex points
\[
M(\C) = G(\Q) \backslash \DD\times G(\A_f) / K.
\]
For the rest of the introduction we assume that $V$ is anisotropic, so that $M$ is projective.

Fix a positive integer $d\le n+1$.
Given the data of a nonsingular symmetric matrix $T \in \Sym_d(F)$ and a $K$-fixed $\Z$-valued Schwartz function 
\[
\varphi \in S(\widehat{V}^d),
\]
one can define a special cycle $Z(T,\varphi)$ on $M$ of codimension $d$,   as in \cite{kudlaMSRI}.
After fixing a positive definite $v\in \mathrm{Sym}_d(\R)$, 
Garcia-Sankaran \cite{GS} construct a smooth form $\mathfrak{g}^\circ(T,v,\varphi)$  of type $(d-1,d-1)$ on the complex fiber of $M \smallsetminus Z(T,\varphi)$.  This form is locally integrable on $M(\C)$, and its associated current satisfies the Green equation 
\[
dd^c [ \mathfrak{g}^\circ(T,v,\varphi) ] + \delta_{Z(T,\varphi)} = [ \omega^\circ(T,v,\varphi) ] 
\]
for a smooth form $\omega^\circ(T,v,\varphi)$.  In particular, it  determines an arithmetic cycle class
\begin{equation}\label{intro cycle}
\widehat{C}_M(T,v,\varphi) =
( Z(T,\varphi) , \mathfrak{g}^\circ(T,v,\varphi) ) \in \widehat{\mathrm{CH}}^d (M) 
\end{equation}
in the Gillet-Soul\'e arithmetic Chow group.

Because our main focus is on the Green currents,  in this paper we work exclusively with the arithmetic Chow group of the canonical model over the reflex field.  No integral models will appear.  

Following \cite{GS}, in \S \ref{ss:canonical cycle construction}  we extend the definition of \eqref{intro cycle} to all $T\in \Sym_d(F)$, including the case $\det(T)=0$.
For the purposes of this introduction, we  say only that this extension makes use of a special hermitian line bundle
\[
\widehat{\omega} \in \widehat{\mathrm{Pic}}(M) \iso  \widehat{\mathrm{CH}}^1(M) .
\]
For example, in the degenerate case of the zero matrix $0_d\in \Sym_d(F)$, the definition is
\[
\widehat{C}(0_d ,v,\varphi) =   \varphi(0)\cdot   \big(
 \underbrace {\,  \widehat{\omega}^{-1} \cdots \widehat{\omega}^{-1} } _{d} 
 +   \big( 0 , -\log(\det(v)) \cdot  \Omega^{d-1} \big) \big)
\]
where the $\cdots$ on the right hand side is iterated arithmetic intersection, and $\Omega^{d-1}$ is the $d-1$ exterior power of the Chern form of $\widehat{\omega}^{-1}$.

Now suppose that our quadratic space is presented as an orthogonal direct sum $V=V_0\oplus W$ with $W$ totally positive definite and $\dim(V_0)\ge 3$.  In particular, $V_0$ has signature $(n_0,2)$ at the real embedding $\sigma:F\to \R$, and is positive definite at all other embeddings.    As such, it has its own Shimura datum $(G_0,\DD_0)$, and a choice of compact open $K_0 \subset G_0(\A_f)$ determines a Shimura variety $M_0$ over $\sigma(F) \subset \C$  with its own family of arithmetic special cycles
\[
\widehat{C}_{M_0}(T,v,\varphi_0)  \in \widehat{\mathrm{CH}}^d (M_0) .
\]

The inclusion $V_0 \to V$ induces an embedding of Shimura data $(G_0,\DD_0) \to (G,\DD)$.
We  choose $K_0$ and $K$ in such a way that $K_0 \subset G_0(\A_f) \cap K$, and so that the   induced  
$
i_0 : M_0 \to M
$
 is a closed immersion.
Our main result, stated in the text as Corollary \ref{cor:pullback}, is the following pullback formula for arithmetic special cycles.

\begin{bigtheorem}\label{BigThmA}
Fix a  $K$-fixed Schwartz function  
\[
\varphi =  \varphi_{0} \otimes \psi \in S( \widehat{V}_0^d ) \otimes S(\widehat{W}^d) \subset  S( \widehat{V}^d)
\]
with $\varphi_0$ fixed by $K_0$,  and both $\varphi_0$ and $\psi$ valued in $\Z$.
The pullback 
\[
i_0^* : \widehat{\mathrm{CH}}^d(M) \to \widehat{\mathrm{CH}}^d( M_0) 
\]
satisfies
\[
i_0^* \widehat{C}_M(T,v,\varphi) =  
 \sum_{ \substack{ T_0 \in \Sym_d( F ) \\ y\in W^d  \\  T_0+T(y) = T   }  }   \psi(y)   \cdot    \widehat{C}_{M_0}(T_0,v,\varphi_{0})
\]
for all $T\in \Sym_d(F)$ and all  positive definite $v\in \Sym_d(\R)$.
Here $T(y) \in \Sym_d(F)$ is the moment matrix of the tuple $y\in W^d$,  as in \eqref{moment matrix}.
 \end{bigtheorem}

\begin{remark}
If one forgets Green currents and works with the usual Chow groups of $M$ and $M_0$,  the above pullback formula appears  \cite{kudla21}.
\end{remark}

\begin{remark}
The constructions of Green currents in \cite{GS} are carried out on the Shimura varieties for special orthogonal groups of signature $((n,2) , (n+2,0), \ldots, (n+2,0) )$ as above, and also on  the Shimura varieties for unitary groups of signature $(( n,1) , ( n+1,0) , \ldots, (n+1,0) )$.  
There is no difficulty at all in proving the analogue of Theorem \ref{BigThmA} also in the unitary case, using exactly the same argument.  We have  restricted attention to the orthogonal case only for concreteness, and to avoid excessively cluttering the exposition. 
\end{remark}

We have not attempted to exhaust the methods, which can presumably be pushed farther.
For example, one would like a similar statement for noncompact Shimura varieties and integral models, 
as well as a formula expressing the intersection  of two arithmetic special cycles as a linear combination of other arithmetic special cycles.  There should  be similar results for the Shimura varieties associated to quadratic spaces with signature $(n,2)$ at several archimedean places.  
One could also try to prove similar formulas for other Green currents, such as those of Funke-Hofmann \cite{FH}.
The author hopes to address some of these questions in future work.

%%%%%%%%%%%%%%%%%%%%%%%%%%

\subsection{Outline of the paper}

%%%%%%%%%%%%%%%%%%%%%%%%%%%

In \S \ref{s:arithmetic intersection theory} we recall what we need from Hu's thesis \cite{Hu}.
Suppose  $X_0 \to X$ is a closed immersion of complex manifolds.  
If $G$ is a Green current for an analytic cycle $Z$ on $X$,  one would like to construct a Green current  $\sigma_{X_0/X}(G)$ for the specialization $\sigma_{X_0/X}(Z)$ of $Z$ to the normal bundle $N_{X_0/X}$.

To see how this works, denote by  $\tilde{X}$ the deformation to the normal bundle $N_{X_0/X}$.  It comes with a  holomorphic function 
 $\tau : \tilde{X} \to \C$ whose fiber over $t\in \C$ we denote by $\tilde{X}_t$.  
 The fiber at $t=0$ is $\tilde{X}_0 = N_{X_0/X}$. 
 If $t\neq 0$ there is a canonical identification $X\iso \tilde{X}_t$, and hence a closed immersion $j_t : X \iso \tilde{X}_t \hookrightarrow \tilde{X}$.  In this way we obtain a current
 $j_{t*}G$ on $\tilde{X}$.  Hu's idea is to look for  a logarithmic expansion
\[
j_{t*}G = \sum_{i \ge 0} G_i(t) \cdot (\log |t| )^i 
\]
whose coefficients  $G_i(t)$ are currents on $\tilde{X}$  with the property that each function  $t \mapsto G_i(t)$ extends continuously to $t=0$, and define $\sigma_{X_0/X}(G)$ in terms of the current $G_0(0)$.
In this generality such a logarithmic expansion need not exist.  If it does exist the logarithmic expansion will not be unique, but  $ \sigma_{X_0/X} (G)$  is independent of the choice.

In Theorems \ref{thm:hu} and \ref{thm:arithmetic specialization} we quote two results of Hu.
The first  guarantees the existence of logarithmic expansions (and hence specializations to the normal bundle) for certain currents on $X$.  The second shows that if $X$ is a projective variety over a number field, one can use the specialization of cycles and Green currents to define a morphism from the arithmetic Chow group of $X$ to the arithmetic Chow group of $N_{X_0/X}$.  This morphism of arithmetic Chow groups  agrees with  the one induced by  pullback through the composition $N_{X_0/X} \to X_0 \to X$.

Now suppose  $L$ a hermitian line bundle on $X$.
In  \S \ref{s:green} we recall from  \cite{GS} a construction that takes a tuple $s=(s_1,\ldots, s_d)$ of global sections of  $L^\vee$  and produces a Green current $\mathfrak{g}^\circ (s)$ for the analytic cycle  $Z(s)$ defined by $s_1=\cdots = s_d=0$.
The central problem is to understand the specialization $\sigma_{X_0/X}(\mathfrak{g}^\circ (s) )$.
 For our applications it is enough to assume that $s=(p,q)$ is the concatenation of tuples $p=(p_1,\ldots, p_k)$ and $q=(q_1,\ldots, q_\ell)$, arranged so that the cycle $Z(p)$ meets $X_0$ properly, while $X_0 \subset Z(q)$.

In essence, our  strategy is to show that the star product formula
 \begin{equation}\label{intro star}
 \mathfrak{g}^\circ(s) = \mathfrak{g}^\circ(p) \star \mathfrak{g}^\circ(q_1) \star\cdots \star \mathfrak{g}^\circ(q_\ell)
 \end{equation}
 of Garcia-Sankaran implies the analogous star product formula
 \begin{equation}\label{intro star 2}
 \sigma_{X_0/X}( \mathfrak{g}^\circ(s))  = \sigma_{X_0/X}(  \mathfrak{g}^\circ(p) )  \star \sigma_{X_0/X}( \mathfrak{g}^\circ (q_1)  ) \star \cdots \star  \sigma_{X_0/X}( \mathfrak{g}^\circ (q_\ell) )
 \end{equation}
for specializations,  and then compute each specialization on the right individually.
As $Z(p)$ intersects $X_0$ properly, the specialization $\sigma_{X_0/X}(  \mathfrak{g}^\circ(p) ) $ is easy to compute.
To compute the specialization of $\mathfrak{g}^\circ(q_i)$ one must  do more work, but the idea is imitate the construction of the current  with $X$ replaced by the deformation to the normal bundle $\tilde{X}$, and use the resulting current on $\tilde{X}$ to find an 
explicit logarithmic expansion for  $ \mathfrak{g} ^\circ(q_i)$ 

It is not obvious to the author that Hu's specialization to the normal bundle is compatible with $\star$ products in general;
that is to say, deducing \eqref{intro star 2} from \eqref{intro star} seems to require using particular properties of the Green currents $\mathfrak{g}^\circ(s)$.
\emph{After} passing to  arithmetic Chow groups the compatibility of specialization with star products follows from Theorem  \ref{thm:arithmetic specialization}, but on the level of arithmetic cycles (that is, before passing to their rational equivalence classes) things are more complicated.   When we apply the calculations  described above to the case of orthogonal Shimura varieties, the complex manifold $X$ is not the Shimura variety $M(\C)$, but rather the hermitian symmetric domain $\DD$ that uniformizes it.  As $\DD$ does not have an arithmetic Chow group in any useful sense, our calculations must be carried out before passing to rational equivalence classes of arithmetic cycles.

To prove the compatibility of specializations with star products we need to show that the Green currents in question admit logarithmic expansions of an especially nice form; this is essentially Lemma \ref{lem:nice expansions}, which is the core of the proof of Proposition \ref{prop:special star}.   While Hu's proof of Theorem  \ref{thm:arithmetic specialization} provides a general construction of logarithmic expansions, the expansions one gets from this method are quite unpleasant.
For example,  if one starts with a current $G$ that is represented by a locally integrable form, the currents $G_i(t)$ produced by Hu's construction will typically not have this form (Hu's construction of logarithmic expansions uses an inductive process, and each step of the induction introduces $\delta$-currents that are not represented by smooth forms). 
It is  essential to our method that we find logarithmic expansions for $\mathfrak{g}^\circ(s)$ that are better behaved than  those obtained by tracing through Hu's proof of  Theorem  \ref{thm:arithmetic specialization}.

We emphasize that all the  calculations in \S \ref{s:green} are carried out in the setting of an arbitrary complex manifold $X$, and don't involve orthogonal Shimura varieties (or any Shimura varieties) at all.

Finally, in \S \ref{s:ortho} we define the precise arithmetic cycle classes $\widehat{C}(T,v,\varphi)$ appearing in Theorem \ref{BigThmA}, and apply the general constructions of the preceding sections to the case of orthogonal Shimura varieties.
The strategy for proving Theorem \ref{BigThmA} is  to use the explicit calculation of specializations of cycles and Green currents to show that the arithmetic cycles appearing in the equality of that theorem become equal after pullback via the bundle map $N_{M_0/M} \to M_0$.  By Proposition \ref{prop:injective normal}, they must have been equal before the pullback as well.

Something like  specializations to the normal bundle of Green currents  were  computed in \cite{AGHMP}, but for Bruinier's  \cite{Br02} Green functions for  special divisors.
When working only with arithmetic divisors,  the situation is much simpler, and one doesn't really need specialization to the normal bundle at all.
The codimension one arithmetic Chow group can be identified with the arithmetic Picard group, and pullback then agrees with the  naive notion of pullback of hermitian line bundles.  
One can compute arithmetic pullbacks (even in cases of improper intersection) more directly using this interpretation.  
This is the approach taken in \cite{BHY}, which is the unitary Shimura variety analogue of \cite{AGHMP}. 
 To compute pullbacks for higher codimension arithmetic Chow groups, the author knows of no method other than the specialization to the normal bundle approach developed here.

As a final remark, we note  that the proof of Theorem 4.13 of \cite{BiGSb}, whose statement involves pullbacks of arithmetic cycle classes via closed immersions,  also makes use of the deformation to the normal bundle.  
The connection between the methods used in \cite{BiGSb} and the logarithmic expansions of \cite{Hu} are not obvious  to the author.

%%%%%%%%%%%%%%%%%%%%%%%%%%%

\subsection{Acknowledgements} 

%%%%%%%%%%%%%%%%%%%%%%%%%%%%

The author thanks Jan Bruinier, Luis Garcia, Jos\'e Burgos Gil,  and Siddarth Sankaran for helpful discussions, some of which took place during the AIM workshop  \emph{Arithmetic intersection theory on Shimura varieties}, January 4-8, 2021.
The author thanks AIM for facilitating these conversations.  The author also thanks the anonymous referees for helpful comments and corrections.

%%%%%%%%%%%%%%%%%%%%%%%%%%%%%%%%

\section{Arithmetic specialization to the normal bundle}
\label{s:arithmetic intersection theory}

%%%%%%%%%%%%%%%%%%%%%%%%%%%%%%%%

In this section we recall some results from Hu's thesis \cite{Hu},  and restate them in the precise form they will be needed later.

%%%%%%%%%%%%%%%%%%%%%%%%%%%%%%%%%

\subsection{Logarithmic differential forms}
%\label{ss:forms and currents}

%%%%%%%%%%%%%%%%%%%%%%%%%%%%%%%%%%

We recall some definitions and results from \cite{BurgosGreen}.
Let $X$ be a complex manifold of dimension $n=\dim(X)$.

\begin{definition}
If  $Z\subset X$ is any analytic subset (i.e., a reduced closed analytic subspace), 
a \emph{resolution of singularities} 
\begin{equation}\label{resolution def}
r:(X',Z') \to (X,Z)
\end{equation}
 is a complex manifold $X'$ together with a proper surjection $r:X' \to X$,  such that  $Z'=r^{-1}(Z)$  is a divisor with normal crossings, and  $r$ restricts to an isomorphism  $X'\smallsetminus Z' \iso X \smallsetminus Z$.
\end{definition}

\begin{remark}
A  resolution of singularities  always exists by Theorem 3.3.5  of \cite{Kollar}, extended to analytic spaces as in \S 3.4.4 of \emph{loc.~cit.}.   See also \cite{Wlod}.  For quasi-projective varieties, this is  Hironaka's theorem.
\end{remark}

Denote by 
\[
E^\bullet_X=\bigoplus_{k\ge 0} E^k_X
\]
 the graded $\C$-algebra of smooth  differential forms on $X$.
 For  $\phi \in E^\bullet_X$, let 
\[
\phi_{[k]}\in E^k_X
\]
be its component in degree $k$.  
Let ${}_cE^\bullet_X \subset E^\bullet_X$ be the graded subspace of compactly supported forms, and 
denote by  $D_X^k$  be the space  of currents dual  to ${}_cE_X^{2n-k}$.
There is a  canonical injection  $E_X^k \to D_X^k$,    denoted  $g\mapsto [g]$,  defined by 
\begin{equation}\label{currents}
[g] (\phi) = \int_X g\wedge \phi.
\end{equation}
When no confusion can arise, we sometimes omit the brackets, and write $g$ both for the form and its associated  current.

Given a divisor with normal crossings $Z \subset X$, let 
\begin{equation}\label{divisor logforms}
E^\bullet_X(\log Z ) \subset E^\bullet_{X\smallsetminus Z}
\end{equation}
be the graded subalgebra of forms with logarithmic growth along $Z$ as in \S 1.2 of  \cite{BurgosGreen}:
in local coordinates on  $X$ such that  $Z$ is given  by the equation $z_1 \cdots z_m=0$, the forms of logarithmic growth are generated, as an algebra over the ring of   smooth forms,  by 
\[
\log |z_i| , \frac{dz_i}{z_i} , \frac{d \bar{z}_i}{\bar{z}_i } \quad \mbox{for } 1\le i \le m.
\]

Now let $Z\subset X$ be any analytic subset of codimension $d>0$.
A  choice of resolution of singularities \eqref{resolution def} determines a subspace  
\begin{equation}\label{logforms}
E^\bullet_X(\log Z ) \subset E^\bullet_{X\smallsetminus Z},
\end{equation}
consisting of those forms whose pullback to $E^\bullet_{X' \smallsetminus Z'}$  has logarithmic growth along the normal crossing divisor $Z'$.
Although the notation does not indicate it, this subspace genuinely depends on the choice of resolution of singularities.

Denote by 
\[
E^\bullet_X(\mynull Z) \subset E^\bullet_X
\]
the graded subspace of forms whose pullback to (the smooth locus of)  $Z$ vanishes, and let ${}_cE^\bullet_X(\mynull Z)$ be the graded subspace of compactly supported such forms. The inclusion 
$
{}_cE^\bullet_X(\mynull Z) \subset {}_c E^\bullet_X
$
induces  a  canonical surjection
\[
D^\bullet_{X} \to D^\bullet_{X/Z},
\]
where  $D^k_{X/Z}$ is the space of currents dual to ${}_cE^{2n-k}_X(\mynull Z)$.

\begin{proposition}[Burgos]
 For any $g\in E^\bullet_X(\log Z )$ and  $\phi \in E_X^\bullet(\mynull Z)$, 
the form $g\wedge \phi$ is locally integrable on $X$.  The integral   \eqref{currents}  defines an   injection
\[
E^\bullet_X(\log Z ) \map{ g\mapsto [g] }  D_{X/Z}^\bullet
\]
satisfying $\partial [g] = [\partial g]$, and similarly for $\overline{\partial}$.
 \end{proposition}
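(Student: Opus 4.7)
The plan is to reduce everything to a local computation on a resolution of singularities $r : (X', Z') \to (X,Z)$. Since $r$ is proper and restricts to an isomorphism on $X' \smallsetminus Z'$, local integrability of $g \wedge \phi$ on $X$ follows from local integrability of $r^*(g \wedge \phi)$ on $X'$, where $Z'$ is a normal crossings divisor. In local coordinates $z_1, \ldots, z_n$ with $Z'$ cut out by $z_1 \cdots z_m = 0$, the pullback $r^* g$ is by definition a sum of monomials in $\log|z_i|$, $dz_i/z_i$, $d\bar{z}_i/\bar{z}_i$ with smooth coefficients, while $r^*\phi$ is a smooth form whose restriction to each component $\{z_i=0\}$ of $Z'$ vanishes.

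Integrability is then checked monomial-by-monomial. A smooth form vanishing on $\{z_i=0\}$ lies in the graded ideal generated by $z_i$, $\bar{z}_i$, $dz_i$, $d\bar{z}_i$, so $r^*\phi$ sits in the intersection of these ideals for $i \le m$. The dangerous $1/|z_i|^2$ singularity coming from a product $(dz_i/z_i)\wedge(d\bar{z}_i/\bar{z}_i)$ in $r^*g$ is either absorbed by a $z_i$ or $\bar{z}_i$ factor from $r^*\phi$ (leaving the locally integrable singularity $1/|z_i|$) or else annihilated by antisymmetry when it meets a $dz_i$ or $d\bar{z}_i$ factor from $r^*\phi$; the purely logarithmic factors $\log|z_i|$ are manifestly integrable. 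Pushing forward by the proper map $r$ gives local integrability of $g \wedge \phi$ on $X$.

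Once local integrability is in hand, the pairing $\phi \mapsto \int_X g \wedge \phi$ defines a continuous linear functional on $_cE^\bullet_X(\mynull Z)$, hence a current $[g] \in D^\bullet_{X/Z}$. Injectivity is immediate: any $\phi \in {}_cE^\bullet_{X \smallsetminus Z}$ extends by zero to an element of ${}_cE^\bullet_X(\mynull Z)$, so $[g] = 0$ in $D^\bullet_{X/Z}$ forces $g$ to vanish as a smooth form on $X \smallsetminus Z$.

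Finally, the identity $\partial [g] = [\partial g]$ (and its $\overline{\partial}$ analogue) amounts to showing $\int_X \partial g \wedge \phi = (-1)^{\deg g + 1} \int_X g \wedge \partial \phi$ for every compactly supported test form $\phi \in {}_cE^\bullet_X(\mynull Z)$. Writing $\partial(g \wedge \phi) = \partial g \wedge \phi + (-1)^{\deg g} g \wedge \partial \phi$ and integrating over the complement of a shrinking tubular neighborhood $U_\epsilon$ of $Z$, Stokes' theorem reduces the identity to the vanishing of boundary integrals over $\partial U_\epsilon$ as $\epsilon \to 0$. After pulling back to $X'$, these boundary integrals localize on the real hypersurfaces $\{|z_i| = \epsilon\}$, where each logarithmic factor $\log|z_i|$ contributes only a $\log \epsilon$ divergence and each $dz_i/z_i$ contributes a factor scaling like $1/\epsilon$; the vanishing of $r^*\phi$ along $Z'$ supplies at least one compensating factor of $|z_i|$, leaving enough decay to kill the boundary term in the limit. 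This final limit estimate is the main technical obstacle; it is the content of Burgos' argument in \cite{BurgosGreen}, to which we refer for the careful verification.
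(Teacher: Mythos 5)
Your outline is correct and matches the argument behind the result the paper cites: the paper's proof of this proposition consists entirely of the reference to Corollaries 3.7 and 3.8 of \cite{BurgosGreen}, and your reduction to a normal crossings situation via the resolution, the monomial-by-monomial cancellation of the $|z_i|^{-2}$ singularities against the vanishing of $\phi$ along $Z$, and the Stokes/boundary-term analysis for $\partial [g] = [\partial g]$ is precisely Burgos's strategy. Since you explicitly defer the final limit estimate to \cite{BurgosGreen}, nothing further is needed.
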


\begin{proof}
See Corollaries 3.7 and 3.8 in \cite{BurgosGreen}.  
\end{proof}

\begin{remark}\label{rem:log currents}
 If $k < 2d$ then any form in $E^{2n-k}_X$ has trivial pullback to $Z$, and hence 
$
 D^k_{X/Z}  = D^k_{X} .
$
In particular, we obtain an injection 
\[
E^k_X(\log Z ) \map{ g\mapsto [g] }  D_{X}^k.
\]
For  $g\in E^k_X(\log Z)$ with $k<2  d-1$ we have  $\partial [g]=[\partial g]$ in $D^{k+1}_X$, and similarly for $\overline{\partial}$.
\end{remark}

\begin{definition}\label{def:burgos forms}
Suppose that $U$ is a smooth quasi-projective complex variety.
By a \emph{smooth compactification} of $U$  we mean a smooth projective variety $U^*$, a divisor with normal crossings $\partial U^* \subset U^*$, and an isomorphism $i :U \iso U^* \smallsetminus \partial U^*$.  The  smooth compactifications of $U$  form a cofiltered category in a natural way, allowing us to form the graded subalgebra
\[
E^\bullet_{\log} (U) = \dlim_{ ( U^* , \partial U^* , i) } E^\bullet_{U^*} ( \log \partial U^*) \subset
E^\bullet_U
\]
of \emph{forms with logarithmic singularities along $\infty$}; see Definition 1.2 of \cite{BurgosChow} and the discussion surrounding it.
\end{definition}

\begin{remark}\label{rem:log to log}
Of  special interest is the case in which $X$ is a smooth quasi-projective complex variety, $Z\subset X$ is a closed subvariety of codimension $d$, and we take $U=X\smallsetminus Z$.  In this case, for any 
\[
g \in E^k_{\log} ( X \smallsetminus Z)
\]
there is a resolution of singularities \eqref{resolution def} such that 
$
g \in E^k_X ( \log Z).
$
When $k < 2 d$,  Remark \ref{rem:log currents} therefore provides us with an injection 
\[
E^k_{\log}( X\smallsetminus  Z ) \map{ g\mapsto [g] }  D_{X}^k.
\]
\end{remark}

In the usual way, the complex structure on $X$ induces bigradings 
\[
E_X^k = \bigoplus_{p+q=k} E_X^{p,q} \quad \mbox{and}\quad  D_X^k = \bigoplus_{p+q=k} D_X^{p,q},
\]
and similarly for the  other spaces of forms and currents appearing above.

%%%%%%%%%%%%%%%%%%%%%%%%%%%%%%%%%

\subsection{Specialization to the normal bundle}
\label{ss:hu}

%%%%%%%%%%%%%%%%%%%%%%%%%%%%%%%%%%

For a closed immersion of schemes  $X_0 \subset X$ one has the normal cone  $C_{X_0/X} \to X_0$.
If $X_0 \subset X$ is a regular immersion, the normal cone agrees with  (the total space of) the normal bundle $N_{X_0/X}\to X_0$.  
These constructions, as well as  the deformation to the normal cone, generalize in an obvious way to a closed immersion of complex analytic spaces;  the necessary technical details are in \cite{AM}.

Now suppose that $X_0 \subset X$ is a closed immersion of complex manifolds.
Denote by $\tilde{X}$ the deformation to   $N_{X_0/X}=C_{X_0/X}$.
By  construction, it comes with morphisms
\begin{equation}\label{deformation diagram}
\xymatrix{
{X} & { \tilde{X} } \ar[r]^\tau \ar[l]_\pi  & {\C}  
}
\end{equation}
%\begin{equation}\label{deformation diagram}
%\xymatrix{
%& { \tilde{X} } \ar[dr]^\tau \ar[dl]_\pi \\
%{ X } & & {\C}
%}
%\end{equation}
such that $\pi$ identifies every fiber $\tilde{X}_t = \tau^{-1}(t)$  with
\[
\tilde{X}_t \iso \begin{cases}
X & \mbox{if }t\neq 0 \\
N_{X_0/X}  & \mbox{if }t=0.
\end{cases}
\]
When $t\neq 0$ we denote by 
\begin{equation}\label{j inc}
j_t : X \iso \tilde{X}_t \hookrightarrow \tilde{X}
\end{equation}
 the  inclusion, and similarly for 
 $
 j_0 : N_{X_0/X} \iso \tilde{X}_0 \hookrightarrow \tilde{X}.
 $

Let $Z \subset X$ be any  equidimensional analytic subset, endowed with its reduced complex analytic structure.  
The \emph{strict transform}
\begin{equation}\label{strict transform}
\tilde{Z} \subset \tilde{X}
\end{equation}
of $Z$   is defined as the deformation to the normal cone $C_{ (Z\times_X X_0) / Z} $.  It  is again reduced (although  $Z\times_X X_0$ and the normal cone  $C_{ (Z\times_X X_0) / Z} $ need not be), and can be  characterized as  the union of all irreducible components of $\pi^{-1}(Z)$  not contained in $N_{X_0/X}$.
 Equivalently, \eqref{strict transform} is the  closure of  $\pi^{-1}(Z)  \smallsetminus \tilde{X}_0$ in $\tilde{X}$.

By an \emph{analytic cycle} on a complex manifold we mean a locally finite formal $\Z$-linear combination of  irreducible analytic subsets, all of the same codimension.  
Being reduced and equidimensional, we may view $\tilde{Z}$  as an analytic cycle on $\tilde{X}$  and form, for every $t\in \C$,  the analytic cycle
\begin{equation}\label{draper intersection}
\tilde{Z}_t = \tilde{Z} \cdot \tilde{X}_t
\end{equation}
on $\tilde{X}$ supported on the fiber $\tilde{X}_t$.
Here the proper intersection of analytic cycles on the right  is taken in the sense of Draper \cite{Draper}.
Of special interest is the analytic cycle \eqref{draper intersection} at $t=0$.

\begin{definition}\label{def:cycle specialization}
The analytic cycle 
\begin{equation}\label{cycle specialization}
\sigma_{X_0/X}(Z) \define \tilde{Z}_0 
\end{equation}
 on $N_{X_0/X}=\tilde{X}_0$ is the \emph{specialization of $Z$ to the normal bundle $N_{X_0/X}$}.    
\end{definition}

Having defined $\tilde{Z}$,  $\tilde{Z}_t$, and $\sigma_{X_0/X}(Z)$ for a reduced analytic subset $Z\subset X$, extend the definitions linearly to all analytic cycles $Z$ on $X$.

\begin{remark}\label{rem:fiber integrals}
If   $t\neq 0$ then $\tilde{Z}_t$  is simply the pushforward of   $Z$ under the inclusion  \eqref{j inc}.
 The cycle  $\tilde{Z}_0$, which may be nonreduced,  is then uniquely determined by the continuity at $t=0$ of the function
\begin{equation}\label{king integrals}
t\mapsto   \delta_{\tilde{Z}_t}( \phi) \define \int_{ \tilde{Z}_t } \phi ,
\end{equation}
for every  $\phi \in {}_cE_{\tilde{X}}^{2 \dim(Z)}$.   Moreover, if we temporarily denote by $I_\phi$ the continuous compactly supported function on $\C$ defined by  \eqref{king integrals}, one has the Fubini-style integration formula
\[
\int_{ \tilde{Z} }  \phi \wedge \tau^* \omega  = \int_\C  I_\phi \wedge \omega 
\]
 for any smooth $2$-form  $\omega$ on $\C$.

The continuity of \eqref{king integrals} and the Fubini formula  \eqref{king integrals} are due to 
 King \cite{King}.    More precisely, Theorem 3.3.2 of \emph{loc.~cit.} constructs  a family of analytic cycles $t \mapsto \tilde{Z}_t$ on $\tilde{X}$ for which these properties hold;
 the equality of this family of cycles  with \eqref{draper intersection} is then a consequence of the results of \S 4.1 of \cite{King}, especially Proposition 4.1.6 and the remarks that follow it.
\end{remark}

  \begin{remark}
  In the case where $X$ and $Z$ are the complex analytic spaces associated to finite type schemes over $\C$, Draper's  analytic intersection \eqref{draper intersection} agrees with the proper intersection of cycles in the algebraic sense of  \cite{Fulton} or \cite{SouleBook}, and 
the cycle \eqref{cycle specialization} agrees with the specialization to the normal cone in the  sense \cite{Fulton}.
  \end{remark}

\begin{definition}\label{def:log expansion}
Fix a  $G \in D_X^k$, and note that every $t\in \C \smallsetminus \{0\}$ determines a  current
\[
 j_{t*} G  \in  D^{k+2}_{\tilde{X}  }.
\]
We say that $G$ \emph{admits a logarithmic expansion along $X_0$} if there is a sequence of  functions
$
 G_0, G_1,G_2,\ldots : \C  \to D^{k+2}_{ \tilde{X} }  
$
 with the following properties.
\begin{enumerate}
\item
For all $t\in \C \smallsetminus \{0\}$ we have 
 \[
 j_{t*} G    =    \sum_{i \ge 0}   G_i  (t)  \cdot   ( \log |t| )^i ,
 \]
and the sum is locally finite:  for every compact subset $K\subset \tilde{X}$  there is an integer $M_K$ such that 
 $
 G_i  (t)(  \phi)=0
 $
 for all  $i > M_K$, all  $t\in \C\smallsetminus \{0\}$, and all 
 $
 \phi \in E_{\tilde{X}}^{ 2n-k } 
 $
  with support contained in $K$.

 \item
 For every $i\ge 0$ and every  $\phi\in {}_cE_{\tilde{X}}^{2n-k}$, the function $ t \mapsto G_i(t)(\phi)$   is  continuous    at $t=0$, and is H\"older continuous at $t=0$ if $i>0$.

\item
Each  $G_i(0)$ lies in the image of 
$
j_{0*} : D^k _{  \tilde{X}_0  }  \to  D^{k+2} _{  \tilde{X}  } .
$
  \end{enumerate}
\end{definition}

The following result  is   slightly weaker  than Theorem 3.2.2 of \cite{Hu};  see  Remark \ref{rem:strong hu} below.   
It provides a general criterion for the existence of logarithmic expansions. 

\begin{theorem}[Hu]\label{thm:hu}
Suppose we have a form 
\[
g \in E_X^k(\log Z )
\]
 in the subspace \eqref{logforms}, for some  equidimensional analytic subset $Z\subset X$ of positive codimension, and some choice of resolution of singularities.
If $g$ is locally integrable on  $X$,  then the associated current    $[g]\in D^k_{X}$   admits a logarithmic expansion along $X_0$.
Moreover, if $X$ is compact there exists a logarithmic expansion with $G_i=0$ for $i\gg 0$.
\end{theorem}

\begin{remark}
Hu works on smooth quasi-projective complex varieties, but the same proof works for complex manifolds.  
The only difference is that in the quasi-projective case one can use the existence of smooth compactifications of $X$ and $\tilde{X}$ to prove the existence of finite (not just locally finite) logarithmic expansions.  See  Remark \ref{rem:strong hu} below.
\end{remark}

\begin{remark}\label{rem:log uniqueness}
Suppose we are given functions $f_0,\ldots, f_m: \C \to \C$ with $f_0$   continuous at $0$, and  $f_1,\ldots, f_m$  H\"older continuous at $0$.
An easy induction on $m$, as in Lemma 3.1.5 of \cite{Hu}, shows that if 
\[
\lim_{t \to 0}    \sum_{i=0}^m f_i (t)   \cdot   ( \log |t| )^i   =0, 
\]
then $f_i(0) =0$ for all $i$.
\end{remark}

The functions $G_i$ in Definition \ref{def:log expansion}, when they exist,  are not uniquely determined.
However, it follows from Remark \ref{rem:log uniqueness}  that the currents
$G_i(0)$ are independent of the choice of logarithmic expansion.  This  allows us to make the following definition.

\begin{definition}\label{def:current specialization}
If $G \in D_{X}^k$ admits a logarithmic expansion along $X_0$,  its   \emph{specialization  to the normal bundle} is the current 
\[
\sigma_{X_0/X}( G)   \in  D^k_{ N_{X_0/X}  } 
\]
on the normal bundle $N_{X_0/X}$ characterized by $j_{0*} \sigma_{X_0/X}( G)   = G_0(0)$.
\end{definition}

As a trivial example, if $g\in E_X^k$ then $\pi^* g$ is a smooth form on $\tilde{X}$, and 
\[
j_{t*}[g]  (\phi) =  \int_{\tilde{X}_t}  \pi^*g \wedge \phi
\]
for all $t \neq 0$.  
By Remark \ref{rem:fiber integrals}, the right hand side is a continuous function of $t\in \C$, 
and we obtain a logarithmic expansion of $[g]$ by setting $G_0(t) =  \pi^*g \wedge \delta_{\tilde{X}_t}$ and $G_i(t)=0$ for $i>0$.  In particular,
\[
\sigma_{X_0/X}(g) = [ j_0^* \pi^*g] = [ \pi_0^* i_0^* g],
\]
where $\pi_0 : N_{X_0/X} \to X_0$ is the bundle map and $i_0 : X_0 \to X$ is the inclusion.

\begin{remark}\label{rem:d specialization}
If $G \in D^k_X$ admits a logarithmic expansion along $X_0$, then so does $\partial G$, and
\[
\partial \sigma_{X_0/X}(G) =  \sigma_{X_0/X}(\partial G).
\]
The same holds with  $\partial$ replaced by $\overline{\partial}$.
 This is a formal consequence of the definitions; see  Theorem 3.1.6 of \cite{Hu}.
\end{remark}

The following proposition connects Definitions \ref{def:cycle specialization} and \ref{def:current specialization}.
The proof is extracted from the second proof of Theorem 3.2.3 in \cite{Hu}.

\begin{proposition}\label{prop:green specialization}
Suppose $X_0 \subset X$ is a closed complex submanifold, $Z$ is a codimension $d$ analytic cycle on $X$, and   $G\in D_X^{d-1,d-1}$ satisfies the Green equation 
\[
dd^c G + \delta_Z = [ \omega ] 
\]
for some  $\omega \in E_X^{d,d}$.
If  $G$  admits a logarithmic expansion along $X_0$, then  its specialization to the normal bundle
satisfies the Green equation
\[
dd^c \sigma_{X_0/X}(G) + \delta_{ \sigma_{X_0/X}(Z) } =  [ \pi_0^* i_0^*\omega  ]  .
\]
Here $\pi_0 :N_{X_0/X} \to X_0$ is the bundle map and $i_0 :X_0 \to X$ is the inclusion.
\end{proposition}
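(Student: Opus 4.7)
The plan is to derive the Green equation on $N_{X_0/X}$ by applying the specialization operator $\sigma_{X_0/X}$ directly to the Green equation for $G$. This requires three inputs: that $\sigma_{X_0/X}$ commutes with $dd^c$, that $\sigma_{X_0/X}$ sends the smooth form $\omega$ to the pullback $[\pi_0^* i_0^* \omega]$, and that $\sigma_{X_0/X}$ sends the current of integration $\delta_Z$ to $\delta_{\sigma_{X_0/X}(Z)}$.

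First I would rewrite the Green equation as $\delta_Z = [\omega] - dd^c G$, viewed as an identity in $D^{d,d}_X$. Since $G$ admits a logarithmic expansion along $X_0$ by hypothesis, Remark \ref{rem:d specialization} (applied to both $\partial$ and $\overline{\partial}$) guarantees that $dd^c G$ does as well, and the trivial example immediately following Definition \ref{def:current specialization} shows that $[\omega]$ admits a logarithmic expansion with $\sigma_{X_0/X}([\omega]) = [\pi_0^* i_0^* \omega]$. Adding these two expansions produces a logarithmic expansion for $\delta_Z$, and consequently
\[
\sigma_{X_0/X}(\delta_Z) = \sigma_{X_0/X}([\omega]) - dd^c \sigma_{X_0/X}(G) = [\pi_0^* i_0^* \omega] - dd^c \sigma_{X_0/X}(G).
\]
Once I know $\sigma_{X_0/X}(\delta_Z) = \delta_{\sigma_{X_0/X}(Z)}$, rearranging gives exactly the stated Green equation on $N_{X_0/X}$.

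The heart of the argument, then, is the identification $\sigma_{X_0/X}(\delta_Z) = \delta_{\sigma_{X_0/X}(Z)}$. The strategy is to exhibit an explicit logarithmic expansion of $\delta_Z$ of the simplest possible form, namely $G_0(t) = \delta_{\tilde{Z}_t}$ and $G_i(t) = 0$ for $i \ge 1$, and then appeal to the uniqueness of the value $G_0(0)$ noted after Remark \ref{rem:log uniqueness}. For $t \neq 0$ we have $j_{t*} \delta_Z = \delta_{\tilde{Z}_t}$ by definition of $\tilde{Z}_t$, so condition (1) of Definition \ref{def:log expansion} is satisfied. Condition (2) reduces to the continuity at $t=0$ of the fiber integral $t \mapsto \int_{\tilde{Z}_t} \phi$ for compactly supported test forms $\phi$, which is exactly the content of Remark \ref{rem:fiber integrals} (applying the results of King/Draper that characterize the family $\tilde{Z}_t$). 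Condition (3) holds because $\tilde{Z}_0$ is supported on $\tilde{X}_0 = N_{X_0/X}$, so $\delta_{\tilde{Z}_0}$ lies in the image of $j_{0*}$. Thus $j_{0*}\sigma_{X_0/X}(\delta_Z) = \delta_{\tilde{Z}_0}$, which by Definition \ref{def:cycle specialization} is $j_{0*}\delta_{\sigma_{X_0/X}(Z)}$, yielding the desired equality.

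The main obstacle is verifying this last identification of $\sigma_{X_0/X}(\delta_Z)$ with the current of integration along the specialized cycle. The subtlety is that $\delta_Z$ is not obviously of the form covered by Theorem \ref{thm:hu}, so rather than running Hu's general construction one must write down by hand the expansion whose only nonzero term is $G_0(t) = \delta_{\tilde{Z}_t}$; the nontrivial input is precisely the continuity of fiber integration. Once this is in place the rest of the proof is a direct application of Remark \ref{rem:d specialization} and the trivial smooth-form computation, and so is essentially mechanical.
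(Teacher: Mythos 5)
Your proposal is correct and follows essentially the same route as the paper: both arguments push the Green equation to $\tilde{X}$ via $j_{t*}$, use Remark \ref{rem:fiber integrals} for the continuity of $\delta_{\tilde{Z}_t}$ and $\pi^*\omega\wedge\delta_{\tilde{X}_t}$ at $t=0$, and invoke Remark \ref{rem:log uniqueness} together with Remark \ref{rem:d specialization} to identify the constant term. Your repackaging of the computation as "specialization is additive, commutes with $dd^c$, and sends $\delta_Z$ to $\delta_{\sigma_{X_0/X}(Z)}$" is only an organizational difference from the paper's single collected expansion.
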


\begin{proof}
When $t\neq 0$, we may push forward the Green equation for $G$  via $j_t : X \to \tilde{X}$. 
This yields the equality
\[
dd^c j_{t*} G  + \delta_{\tilde{Z}_t} = \pi^*\omega \wedge \delta_{ \tilde{X}_t } 
\]
of currents on  $\tilde{X}$.  Replacing $j_{t*}G$  by a logarithmic expansion  results in 
\[
\left( dd^c G_0(t)  + \delta_{\tilde{Z}_t}  -   \pi^*\omega \wedge \delta_{ \tilde{X}_ t}  \right)  + 
 \sum_{i >  0}  dd^c G_i(t) \cdot   ( \log|t| )^i  = 0,
\]
and it follows from Remarks \ref{rem:fiber integrals} and \ref{rem:log uniqueness}     that 
\[
 dd^c G_0(0)  + \delta_{\tilde{Z}_0}  -   \pi^*\omega \wedge \delta_{ \tilde{X}_ 0} =0  .
\]
The claim now follows using
$
 \pi^*\omega \wedge \delta_{ \tilde{X}_ 0} = j_{0*} [ j_0^* \pi^* \omega]   =  j_{0*} [ \pi_0^* i_0^*\omega ] .
$
\end{proof}

\begin{remark}\label{rem:strong hu}
In Hu's version of Theorem \ref{thm:hu} it is assumed that  $X$  is a  quasi-projective variety, and that
$
g \in E^k_{\log}(X\smallsetminus Z) .
$
These extra assumptions are not used in the proof in any essential way. However, the first  guarantees the existence of a smooth compactification of $X$. Using this,  Hu proves a stronger result than what we have stated. 

  After choosing a smooth compactification  $X\subset X^*$, Hu constructs  a smooth compactification $\tilde{X} \subset \tilde{X}^*$ of the deformation to the normal bundle,  a diagram
 \[
\xymatrix{
{ X^* }   & { \tilde{X}^*} \ar[r]^\tau \ar[l]_\pi  & {\C}  
}
\]
extending \eqref{deformation diagram}, and a  finite expansion of currents
 \begin{equation}\label{strong expansion}
 \pi^* g \wedge \delta_{\tilde{X}^*}  = \sum_{i=0}^M  G_i(t) \cdot (\log |t| )^i 
 \end{equation}
in the space  $D^{k+2}_{\tilde{X}^* / \partial \tilde{X}^*}$.   
The inclusion 
$
{}_cE_{\tilde{X}}^\bullet  \to {}_cE_{\tilde{X}^*}^\bullet( \mynull \partial\tilde{X}^* )
$
induces a surjection
\[
D^\bullet_{\tilde{X}^* / \partial \tilde{X}^*} \to D^\bullet_{ \tilde{X}},
\]
and applying this map to  both sides of \eqref{strong expansion} yields a  finite logarithmic expansion of $[g]$.
The refined logarithmic expansion \eqref{strong expansion} contains more information than a logarithmic expansion in our sense.  Using it, Hu is able to construct a smooth compactification 
$N_{X_0/X} \subset N_{X_0/X}^*$ of the normal bundle, and a distinguished lift of $\sigma_{X_0/X}(g)$ under the surjection
\[
D^\bullet_{    N_{X_0/X}^* / \partial   N_{X_0/X}^*  } \to D^\bullet_{ N_{X_0/X}  }.
\]
  Although we will not need such a lift,  the benefits of having one are explained in Remark \ref{rem:strong hu pullback}.
\end{remark}

%%%%%%%%%%%%%%%%%%%%%%%%%%%%%%%%

\subsection{Arithmetic Chow groups}

%%%%%%%%%%%%%%%%%%%%%%%%%%%%%%%%

We  will use the arithmetic Chow groups defined in \S 3 of \cite{Gillet-Soule}, but only in the simple case of varieties over a field  $F$   with a chosen real embedding  $\sigma : F \to \R$.  If we let $c\in \Aut(\C/\R)$ be complex conjugation,
the triple $(F, \{ \sigma\} ,c)$  is an arithmetic ring,  and any
smooth quasi-projective variety $X$ over $F$ is an arithmetic variety over $(F, \{ \sigma\} ,c)$ in the sense of  \emph{loc.~cit.}.

Let $X_\R = X \otimes_{F,\sigma}\R$ be the base change of $X$ to $\R$,  and regard $X(\C) = X_\R(\C)$  as a complex manifold.
Define a real vector space
\[
E_{X}^{d,d} = 
  \{ \omega \in E_{X(\C)}^{d,d}  :  \omega \mbox{ is real and } c^* \omega = (-1)^{d} \omega  \} ,
\]
where now $c : X(\C) \to X(\C)$ is complex conjugation,  and similarly
\[
D_X^{d,d} = 
  \{ G \in D_{X(\C)}^{d,d}  :  G \mbox{ is real and } c^* G = (-1)^{d} G  \} .
\]

A  \emph{codimension $d$ arithmetic cycle} on $X$ is a pair $(Z,G)$ in which $Z$ is a codimension $d$ cycle in the usual sense, and 
\[
G \in \widetilde{D} _X^{d-1,d-1} \define \frac{D_X^{d-1,d-1}}{ \mathrm{Im}( \partial) + \mathrm{Im}(\overline{\partial})  },
\]
satisfies the Green equation
$
dd^c G + \delta_{Z(\C)} = [ \omega ] 
$
for some $\omega \in E_X^{d,d}$.
 Denote by $\widehat{Z}^d(X)$  the abelian group of all  such pairs. The arithmetic Chow group is the quotient
\[
\widehat{\mathrm{CH}}^d(X)  = \widehat{Z}^d(X) / (\mbox{rational equivalence}) .
\]

Now assume that $X$ is projective, and that $X_0 \subset X$ is a smooth closed subvariety.
 Let $(Z,G)$ be any codimension $d$ arithmetic cycle, and set $U=X\smallsetminus Z$.
 Recalling Definition \ref{def:burgos forms}, define a real vector space
\[
E_{\log}^{d,d} (U)  =   \{ g \in  E_{\log}^{d,d}  (U(\C) ) :  g \mbox{ is real and } F_\infty^* g = (-1)^{d} g  \} .
\]
By Remark \ref{rem:log to log} there is a canonical map 
\[
E_{\log}^{d-1,d-1} (U) \map{ g \mapsto [g] } D_X^{d-1,d-1},
\]
and Theorem 4.4 of \cite{BurgosGreen} implies the existence of a unique lift of $G$ to 
\[
g \in \widetilde{E}_{\log}^{d-1,d-1} ( U)  \define
 \frac{E_{\log}^{d-1,d-1} ( U) }{\mathrm{Im}( \partial) + \mathrm{Im}(\overline{\partial}) } .
\]
 Theorem \ref{thm:hu} therefore  implies that the current $G=[g]$ admits a logarithmic expansion along $X_0$.
 Combining this with Remark \ref{rem:d specialization} and Proposition \ref{prop:green specialization}, we obtain an 
arithmetic cycle 
\[
 (\sigma_{X_0/X}(Z) , \sigma_{X_0/X}( G )) \in \widehat{Z}^d(N_{X_0/X})  .
\] 
 This defines a homomorphism
\begin{equation}\label{arithmetic specialization}
\widehat{Z}^d(X) \to \widehat{Z}^d(N_{X_0/X}).
\end{equation}

The following is slightly weaker than what is proved in \S 4.1 of \cite{Hu}; see Remark \ref{rem:strong hu pullback} below.

\begin{theorem}[Hu]\label{thm:arithmetic specialization}
Still assuming that $X$ is projective, the homomorphism  \eqref{arithmetic specialization} descends to 
\[
\widehat{\mathrm{CH}}^d(X) \to \widehat{\mathrm{CH}}^d(N_{X_0/X}) ,
\]
and this map agrees with the composition
\[
\widehat{\mathrm{CH}}^d(X)  
 \map{i_0^* }   \widehat{\mathrm{CH}}^d(X_0) \map{\pi_0^*}  \widehat{\mathrm{CH}}^d(N_{X_0/X}).
\]
Here $i_0 :X_0 \to X$ is the inclusion,  $\pi_0 :N_{X_0/X} \to X_0$ is the bundle map, and $i_0^*$ and $\pi_0^*$ are the induced pullbacks on  arithmetic Chow groups.
\end{theorem}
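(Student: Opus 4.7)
The plan is to reduce to the case of arithmetic cycles meeting $X_0$ properly, in which case the agreement with $\pi_0^* \circ i_0^*$ can be verified by direct computation, and then to handle rational equivalence via the Gillet-Soul\'e moving lemma.

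First I would treat the proper intersection case.  Suppose $(Z,[g]) \in \widehat{Z}^d(X)$ with $Z$ meeting $X_0$ properly, and $g \in E^{d-1,d-1}_{\log}(U)$ with $U = X \smallsetminus Z$.  On the algebraic side, the standard theory of deformation to the normal cone (as in \cite{Fulton}) identifies $\sigma_{X_0/X}(Z)$ with $\pi_0^* i_0^* Z$ as cycles on $N_{X_0/X}$, because transversality of $Z$ and $X_0$ forces every component of $\pi^{-1}(Z) \cap \tilde{X}_0$ to be the preimage of a component of $Z \cap X_0$ with the correct multiplicity.  On the Green current side, properness of the intersection means that $i_0^* g$ is itself a well-defined form in $E^{d-1,d-1}_{\log}(X_0 \smallsetminus i_0^{-1}(Z))$.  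Then the locally integrable form $\pi^* g$ on $\tilde{X}$ produces a logarithmic expansion of $j_{t*}[g]$ in which, thanks to the transversality, the higher log coefficients vanish and $G_0(t) = [\pi^* g] \wedge \delta_{\tilde{X}_t}$; continuity of fiber integration (Remark \ref{rem:fiber integrals}) gives $\sigma_{X_0/X}([g]) = [j_0^* \pi^* g] = [\pi_0^* i_0^* g]$.  Combining the two, $\sigma_{X_0/X}(Z,[g]) = \pi_0^* i_0^* (Z,[g])$ at the cycle level.

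Next, I would appeal to the Gillet-Soul\'e moving lemma in the form used to define $i_0^*$ on $\widehat{\mathrm{CH}}^d(X)$: every class in $\widehat{\mathrm{CH}}^d(X)$ has a representative $(Z,[g])$ with $Z$ meeting $X_0$ properly.  By the preceding paragraph, the composition $\widehat{Z}^d(X) \xrightarrow{\sigma_{X_0/X}} \widehat{Z}^d(N_{X_0/X}) \to \widehat{\mathrm{CH}}^d(N_{X_0/X})$, restricted to such representatives, factors as $(Z,[g]) \mapsto \pi_0^* i_0^* [Z,[g]]$.  Since $\pi_0^* i_0^*$ is already known to be a well-defined map on arithmetic Chow groups, this shows in particular that any two proper-intersection representatives of the same class produce the same class in $\widehat{\mathrm{CH}}^d(N_{X_0/X})$ after specialization.

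The main obstacle, and the bulk of what I would need to verify carefully, is that the map $\widehat{Z}^d(X) \to \widehat{\mathrm{CH}}^d(N_{X_0/X})$ sending $(Z,[g])$ to the class of $\sigma_{X_0/X}(Z,[g])$ agrees with $\pi_0^* i_0^*$ even on representatives not in proper position, or equivalently that $\sigma_{X_0/X}$ annihilates rational equivalence.  The relations $(0, \partial u + \overline{\partial} v)$ are handled immediately by Remark \ref{rem:d specialization}, which gives $\sigma_{X_0/X}(\partial u + \overline{\partial} v) = \partial \sigma_{X_0/X}(u) + \overline{\partial} \sigma_{X_0/X}(v)$.  The genuinely delicate relations are those of the form $(\mathrm{div}_Y(f), [-\log|f|^2 \delta_{Y(\C)}])$ for a rational function $f$ on a codimension $d-1$ subvariety $Y \subset X$.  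Here I would form the strict transform $\tilde{Y} \subset \tilde{X}$, pull $f$ back to a rational function $\tilde{f}$ on $\tilde{Y}$, and compute the specialization by expanding $\pi^*[-\log|f|^2 \delta_{Y(\C)}]$ in powers of $\log|t|$, arguing that the $t = 0$ constant coefficient is of the rationally trivial form $(\mathrm{div}_{\tilde{Y}_0}(\tilde{f}|_{t=0}), [-\log|\tilde{f}|_{t=0}|^2 \delta_{\tilde{Y}_0(\C)}])$ in $\widehat{Z}^d(N_{X_0/X})$, modulo an additional $\partial + \overline{\partial}$ contribution arising from the non-constant log coefficients.  Once this compatibility with rational equivalence is in place, the induced homomorphism on $\widehat{\mathrm{CH}}^d$ agrees with $\pi_0^* i_0^*$ on the proper-intersection representatives produced by the moving lemma, and hence everywhere.
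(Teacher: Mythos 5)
This theorem is quoted from Hu's thesis; the paper offers no proof of it beyond the citation to \S 4.1 of \cite{Hu} and the discussion in Remark \ref{rem:strong hu pullback}, so there is no in-paper argument to match your proposal against. Judged on its own terms, your architecture --- (i) compute $\sigma_{X_0/X}$ directly on representatives meeting $X_0$ properly and identify it there with $\pi_0^*i_0^*$, (ii) invoke the Gillet--Soul\'e moving lemma, (iii) show $\sigma_{X_0/X}$ kills the defining relations of rational equivalence --- is the standard and correct skeleton, and steps (i) and (ii) are essentially fine (the vanishing of the higher $\log|t|$ coefficients and the identity $\sigma_{X_0/X}(Z)=\pi_0^*i_0^*Z$ in the transverse case are routine local-coordinate and Fulton-style checks).

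The genuine gap is in step (iii), which you correctly identify as ``the bulk of what I would need to verify carefully'' but then only gesture at. The current $[-\log|f|^2]\wedge\delta_{Y(\C)}$ attached to a rational function $f$ on a codimension $d-1$ subvariety $Y$ is \emph{not} a locally integrable form on $X$, so Theorem \ref{thm:hu} does not produce a logarithmic expansion for it; one must instead work on a resolution $Y'\to Y$, where the preimage of $X_0$ is in general no longer a smooth submanifold, so the entire framework of \S \ref{ss:hu} does not apply verbatim. This is precisely the point at which Hu's $\mathcal{D}_{\log}$ machinery and the refined lift \eqref{strong arithmetic cycle specialization} of Remark \ref{rem:strong hu pullback} are needed, and your sketch replaces that construction with the assertion that the expansion exists and that its constant term is $\big(\mathrm{div}_{\tilde{Y}_0}(\tilde f|_{t=0}),[-\log|\tilde f|_{t=0}|^2\,\delta_{\tilde{Y}_0(\C)}]\big)$ up to $\partial+\overline{\partial}$. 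Even granting the expansion, that identification is delicate: $\tilde{Y}_0=\sigma_{X_0/X}(Y)$ may be nonreduced, and $\tilde f$ may vanish identically or have poles along some of its components, so ``$\tilde f|_{t=0}$'' need not be a well-defined nonzero rational function on each component; handling this requires the blow-up/specialization argument for rational equivalence from \cite{Fulton} on the cycle side together with a matching current computation. As written, the proposal is a plausible roadmap rather than a proof of the descent, which is the entire content of the theorem.
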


 \begin{remark}\label{rem:strong hu pullback}
 Assuming only that $X$ is quasi-projective,  there are canonical maps 
 \[
  \widehat{Z}^d(X,\mathcal{D}_{\log}) \to  \widehat{Z}^d(X) 
\quad\mbox{and}\quad
  \widehat{\mathrm{CH}}^d(X,\mathcal{D}_{\log}) \to  \widehat{\mathrm{CH}}^d(X) ,
 \]
 where the domains are the $\mathcal{D}_{\log}$ arithmetic cycles and Chow groups  of \cite{BKK}.  
 These agree with those of \cite{BurgosChow}, and both maps  are isomorphisms if $X$ is projective.
 Hu proves the existence of a distinguished lift of \eqref{arithmetic specialization}  to 
\begin{equation}\label{strong arithmetic cycle specialization}
  \widehat{Z}^d(X,\mathcal{D}_{\log}) \to  \widehat{Z}^d(N_{X_0/X} , \mathcal{D}_{\log})  ,
  \end{equation}
 which then descends to a map on $\mathcal{D}_{\log}$ arithmetic Chow groups.
This descent agrees with the composition 
\[
\widehat{\mathrm{CH}}^d(X, \mathcal{D}_{\log})  
 \map{i_0^*}   \widehat{\mathrm{CH}}^d(X_0, \mathcal{D}_{\log}) \map{\pi_0^*}  \widehat{\mathrm{CH}}^d(N_{X_0/X}, \mathcal{D}_{\log} ).
\]
Even when $X$ is projective, this is stronger than Theorem \ref{thm:arithmetic specialization} (because $N_{X_0/X}$ is not projective).
The construction of the lift \eqref{strong arithmetic cycle specialization} is subtle, but the key ingredient is the lift of $\sigma_{X_0/X}(g)$ mentioned at the end of Remark \ref{rem:strong hu}. 
 \end{remark}

\begin{proposition}\label{prop:injective normal}
The pullback $\pi_0^*$ in Theorem \ref{thm:arithmetic specialization} 
is injective.
\end{proposition}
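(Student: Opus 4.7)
The plan is to exhibit a one-sided inverse to $\pi_0^*$, which will give injectivity directly. The zero section $s_0 : X_0 \hookrightarrow N_{X_0/X}$ is a regular closed immersion of smooth varieties, and it satisfies $\pi_0 \circ s_0 = \mathrm{id}_{X_0}$. If $s_0^* : \widehat{\mathrm{CH}}^d(N_{X_0/X}) \to \widehat{\mathrm{CH}}^d(X_0)$ is defined as a pullback compatible with composition, then functoriality yields
\[
s_0^* \circ \pi_0^* = (\pi_0 \circ s_0)^* = \mathrm{id},
\]
exhibiting $\pi_0^*$ as a split injection.

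The only point to verify is that $s_0^*$ makes sense as a morphism of Gillet--Soul\'e arithmetic Chow groups. This is standard: Gillet--Soul\'e \cite{Gillet-Soule} construct pullbacks along arbitrary morphisms between smooth varieties. At the level of cycles one invokes the moving lemma to reduce to proper intersection with the zero section; at the level of Green currents one then restricts a suitable representative. Equivalently, and more cleanly, one can appeal to the homotopy invariance of arithmetic Chow groups under vector bundle pullback: for any vector bundle $\pi : E \to Y$ the map $\pi^* : \widehat{\mathrm{CH}}^*(Y) \to \widehat{\mathrm{CH}}^*(E)$ is an isomorphism, which applied to $\pi_0$ immediately gives bijectivity, hence the claimed injectivity.

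I do not anticipate any real obstacle. The argument is a standard section-retraction exercise, and the only content is the invocation of Gillet--Soul\'e functoriality (or, equivalently, of homotopy invariance). In particular, none of the logarithmic expansion or specialization machinery from \S\ref{ss:hu} is needed here; the preceding sections are only used to ensure that $\pi_0^*$ arises as the target of the specialization map in Theorem \ref{thm:arithmetic specialization}, which is what makes this injectivity useful for the pullback argument sketched in the introduction.
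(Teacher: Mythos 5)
Your first argument (the section--retraction splitting via the zero section $s_0$) is a legitimate alternative route, but your ``cleaner'' second argument contains a genuine error: Gillet--Soul\'e arithmetic Chow groups are \emph{not} homotopy invariant under vector bundle pullback. For a vector bundle $\pi : E \to Y$ the map $\pi^* : \widehat{\mathrm{CH}}^d(Y) \to \widehat{\mathrm{CH}}^d(E)$ is injective but very far from surjective. The reason is visible in the Gillet--Soul\'e exact sequence
\[
\mathrm{CH}^{d,d-1}(Y) \to \tilde{E}^{d-1,d-1}_{Y} \to \widehat{\mathrm{CH}}^d(Y) \to \mathrm{CH}^d(Y) \to 0 :
\]
the outer terms are homotopy invariant, but the term $\tilde{E}^{d-1,d-1}$ of smooth forms modulo $\mathrm{Im}(\partial)+\mathrm{Im}(\overline{\partial})$ is not --- the total space of $E$ carries many more forms than those pulled back from $Y$, and all of them contribute classes $(0,\eta)$ to $\widehat{\mathrm{CH}}^d(E)$. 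So bijectivity fails, and you cannot conclude anything from ``homotopy invariance.'' (This is in fact why the paper needs the much harder specialization machinery at all: if $\pi_0^*$ were an isomorphism with an explicit inverse, computing $\sigma_{M_0/M}$ and then inverting would be trivial.)

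The paper's actual proof is exactly the diagram chase suggested by the sequence above: it uses the commutative ladder comparing the exact sequences for $X_0$ and $N_{X_0/X}$ (Theorem 3.3.5 of Gillet--Soul\'e), notes that the outer vertical maps $\mathrm{CH}^{d,d-1}(X_0) \to \mathrm{CH}^{d,d-1}(N_{X_0/X})$ and $\mathrm{CH}^d(X_0)\to\mathrm{CH}^d(N_{X_0/X})$ are isomorphisms by homotopy invariance of the \emph{ordinary} (and $K_1$-type) Chow groups (Gillet, Theorem 8.3), that the map on forms is injective, and then runs the four lemma. Your retraction argument, by contrast, requires the existence and functoriality of $s_0^*$ for the zero section --- a non-flat closed immersion into the quasi-projective, non-projective variety $N_{X_0/X}$ --- which in the Gillet--Soul\'e framework means invoking the moving lemma and the restriction of Green currents of logarithmic type. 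That package does exist for smooth quasi-projective varieties over a field, so the splitting $s_0^*\circ\pi_0^* = \mathrm{id}$ does give injectivity, and it even gives the stronger statement that $\pi_0^*$ is split; but it leans on considerably heavier input than the paper's diagram chase, and you should not present it as interchangeable with the (false) homotopy invariance claim.
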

 
\begin{proof}
A similar statement  is found in \cite{BurgosChow}, but for the $\mathcal{D}_{\log}$ arithmetic Chow groups of Remark \ref{rem:strong hu pullback}.
The proof for   Gillet-Soul\'e arithmetic Chow groups is essentially the same:
 By  Theorem 3.3.5 of \cite{Gillet-Soule}  there is  commutative diagram with exact rows
\[
\xymatrix{
{ \mathrm{CH}^{d,d-1}(X_0) } \ar[r] \ar[d] & {   \widetilde{E}^{d-1,d-1}_{X_0}  }   \ar[r] \ar[d]& 
 {   \widehat{\mathrm{CH}}^d(X_0)  } \ar[r] \ar[d]^{\pi_0^*} &   {  \mathrm{CH}^d(X_0)  } \ar[d]  \\
 { \mathrm{CH}^{d,d-1}(N_{X_0/X}) } \ar[r]  & {   \widetilde{E}^{d-1,d-1}_{N_{X_0/X}}  }   \ar[r] & 
 {   \widehat{\mathrm{CH}}^d(N_{X_0/X})  } \ar[r]  &   {  \mathrm{CH}^d(N_{X_0/X})  . }  
}
\]
The first and last vertical arrows are isomorphisms by Theorem 8.3 of \cite{GilletRR}.
 The second vertical arrow is injective, and hence the third is as well.
\end{proof}

%%%%%%%%%%%%%%%%%%%%%%%%%%%%%%%%

\section{Green currents of Garcia-Sankaran}
\label{s:green}

%%%%%%%%%%%%%%%%%%%%%%%%%%%%%%%%

Given a closed immersion of complex manifolds $X_0\subset X$, 
the constructions of Garcia-Sankaran \cite{GS},   Bismut \cite{Bis}, and Bismut-Gillet-Soul\'e \cite{BiGSa}, provide a systematic way to produce Green currents for certain cycles on $X$.
Theorem \ref{thm:hu} can be applied to these currents to prove the existence of logarithmic expansions, but this abstract existence theorem is not sharp enough for our purposes.

The goal of this section is to  construct explicit logarithmic expansions for these currents, and so effectively compute their specializations  to the normal bundle $N_{X_0/X}$.

%%%%%%%%%%%%%%%%%%%%%%%%%%%%%%%%

\subsection{Construction of Green forms}
\label{ss:GS basics}

%%%%%%%%%%%%%%%%%%%%%%%%%%%%%%%%

Let $X$ be a complex manifold, and let $L$ be a holomorphic  line  bundle on $X$. We use the same symbol for both the total space $L\to X$, viewed as a complex manifold fibered over $X$, and for its sheaf of  holomorphic sections.

Let $h(-,-)$ be a hermitian metric on $L$.  If $s$ is any local holomorphic  section of $L$,  abbreviate $h(s) =h(s,s)$.  The \emph{Chern form} of $L$ is the $(1,1)$ form defined locally by 
\[
\chern(L) =   \frac{1}{2\pi i}  \partial \overline{\partial} \log h(s) .
\]
We denote again by $h$ the induced metric on the dual bundle $L^\vee$.  

Fix an integer $1\le d \le \dim(X)$ and a  tuple $s=(s_1,\ldots, s_d)$ with  $s_i\in H^0(X,L^\vee)$, and abbreviate
\[
h(s) = h(s_1) + \cdots + h(s_d).
\]
Denote by 
$
Z(s)   \subset X
$
the (possibly nonreduced) analytic subspace defined by   $s_1=\cdots=s_d=0$.

\begin{definition}
Fix a point $x\in Z(s)$,  trivialize $L$ in a neighborhood of  $x$, and use this to view $s_{1,x},\ldots, s_{d,x} \in \co_{X,x}$ as germs of holomorphic functions at $x$.   We say that $s$ is: 
\begin{itemize}
 \item
 \emph{regular at $x$} if $s_{1,x},\ldots, s_{d,x}\in \co_{X,x}$ is a regular sequence in the sense of commutative algebra;
 \item
 \emph{smooth at $x$} if $s_{1,x},\ldots, s_{d,x}$ are linearly independent in $\mathfrak{m}_{X,x}/ \mathfrak{m}_{X,x}^2$,
 where $\mathfrak{m}_{X,x} \subset \co_{X,x}$ is the maximal ideal.
 \end{itemize}
The tuple $s=(s_1,\ldots, s_d)$ is \emph{regular} or \emph{smooth}  if it has this property at every point   of $Z(s)$.
\end{definition}

\begin{remark}\label{rem:CM2}
Regularity of $s$ at $x$ is equivalent to all irreducible components of $Z(s)$ passing through $x$ having codimension   $d$ in $X$,  and both  are equivalent to $\co_{Z(s),x}$ being Cohen-Macaulay of dimension $\dim(X)-d$.
\end{remark}

\begin{remark}\label{rem:CM3}
Smoothness of $s$  at $x$ is equivalent to  $Z(s)$  being nonsingular (that is, a complex manifold) of  codimension  $d$  in some open neighborhood of $x$, as both  are equivalent to $\co_{Z(s),x}$ being regular of dimension $\dim(X)-d$.
\end{remark}

\begin{remark}\label{rem:smooth intersection}
If $s=(s_1,\ldots, s_d)$ is smooth, we have the equality of cycles
\[
Z(s) = Z(s_1)\cdots Z(s_d)
\]
on $X$, where the intersection on the right is  the proper analytic intersection of Draper \cite{Draper}.
In other words, in the smooth case the intersection
$
\mathrm{div}(s_1)\cdots \mathrm{div}(s_d)
$
 in Draper's sense is (of course) simply the reduced analytic subspace defined by $s_1= \cdots =  s_d=0$.
\end{remark}

\begin{remark}\label{rem:CM1}
If $s=(s_1,\ldots, s_d)$ is regular or smooth at a point $x$, the same is true of all tuples obtained by reordering the components of $s$, and of all tuples $(s_1,\ldots, s_r)$ with  $1 \le r \le d$.
\end{remark}

The claims of Remarks  \ref{rem:CM2},  \ref{rem:CM3}, and \ref{rem:CM1}, all follow from basic properties of regular sequences  and complex analytic spaces, as found in \cite{Matsumura} and \cite{Fischer}.  
Similarly, it is elementary to check that 
regularity of $s$ is equivalent to the corresponding  morphism of  vector bundles 
$
s:L^{\oplus d} \to \co_X
$
 being regular in the sense of \S 2.1.1 of \cite{GS}.  Therefore, if  $s$ is regular, the constructions  (2.5) and (2.12) of \emph{loc.~cit.} define forms
\[
\varphi^\circ(s) \in E^\bullet_X \quad \mbox{and}\quad   \nu^\circ(s) \in E^\bullet_X.
\]
  Both  have trivial components in odd degree, and their components in even degree $2p$ have type $(p,p)$.   Abbreviate
 \[
 \omega^\circ(s) =  (-2\pi i)^{-d} \cdot  \varphi^\circ(s)_{[2d]} \in E^{d,d}_X.
 \]

 We will not recall the detailed construction of the forms above, as we only need the degree $2d$ component of $\varphi^\circ(s)$ and the degree $2d-2$ component of $\nu^\circ(s)$.
 Explicit formulas for these can be found in  \cite{GS} and \cite{Garcia}.  
If $d=1$  then 
\begin{align}
\varphi^\circ(s)_{[2]} 
& =  2 \pi i  e^{-2\pi h(s)}  \left(     \chern(L)   -i     \frac{ \partial h(s) \wedge \overline{\partial} h(s) }{ h(s) }    \right)  \label{phi2}
\end{align}
and 
\begin{equation}\label{nu2}
\nu^\circ(s)_{[0]}  =    e^{-2\pi h(s)}.
\end{equation}
  If $d>1$ then
\begin{equation}\label{phid}
\varphi^\circ(s_1,\ldots,s_d)_{[2d]} = \varphi^\circ(s_1)_{[2]}  \wedge \cdots \wedge  \varphi^\circ(s_d)_{[2]} 
\end{equation}
and  
\begin{equation}\label{nud}
\nu^\circ(s_1,\ldots,s_d)_{[2d-2]} 
= \sum_{j=1}^d \nu^\circ(s_i)_{[0]} \wedge  \varphi^\circ(s_1,\ldots,\widehat{s}_j ,\ldots,s_d)_{[2d-2]}. 
\end{equation}
Strictly speaking, the above formulas are given in  \cite{GS} and \cite{Garcia} only for specific hermitian line bundles on hermitian symmetric domains associated to orthogonal and unitary groups, but the derivations of these formulas hold verbatim in our more general setting.

As explained in \cite{GS}, results of Bismut \cite{Bis} and Bismut-Gillet-Soul\'e \cite{BiGSa} can be used to produce Green currents for the cycles $Z(s) \subset X$ defined above.  
We need a slight strengthening of those results.

\begin{proposition}\label{prop:general green}
If $s$ is regular, the integral
\begin{equation}\label{green def}
\mathfrak{g}^\circ( s) = \left(\frac{-1}{2\pi i} \right)^{d-1} \int_1^\infty  \nu^\circ \big( \sqrt{u} \cdot s \big)_{[2d-2]}  \, \frac{du}{u}
\end{equation}
defines a smooth form on $X\smallsetminus Z(s)$ with
\begin{equation}\label{green singularities}
\mathfrak{g}^\circ(s) = 
 \frac{  a(s) }{ h(s)^{d-1}  }+ b(s) \cdot \log( h(s)) 
\end{equation}
for some $a(s), b(s)  \in E_X^{d-1,d-1}$.
If $s$ is smooth,   then
\begin{equation}\label{green is log}
\mathfrak{g}^\circ(s) \in E_X^{d-1,d-1}( \log Z(s) ) 
\end{equation}
with respect to the resolution of singularities of $(X, Z(s))$ obtained by  blowing up  along $Z(s) \subset X$, and 
the associated current  (Remark \ref{rem:log currents}) satisfies the Green equation
\[
dd^c [\mathfrak{g}^\circ( s)   ] + \delta_{Z(s)} =  [   \omega^\circ(s) ] .
\]
\end{proposition}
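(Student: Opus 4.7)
The plan is to use the explicit formulas \eqref{phi2}--\eqref{nud} to recast $\nu^\circ(\sqrt{u}\, s)_{[2d-2]}$ in closed form, reducing $\mathfrak{g}^\circ(s)$ to a finite linear combination of elementary one-variable integrals whose behavior in $h(s)$ can be analyzed directly. Substituting $s \mapsto \sqrt{u}\, s$ in \eqref{phi2} uses $h(\sqrt u s_i) = u\,h(s_i)$ and the fact that $\alpha_i \define \partial h(s_i) \wedge \overline{\partial} h(s_i)/h(s_i)$ extends smoothly across $s_i = 0$ (in a trivialization with $s_i = z_i$ one has $\alpha_i = dz_i \wedge d\bar{z}_i$ up to lower-order smooth corrections from the metric). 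Combining \eqref{phi2}, \eqref{phid}, and \eqref{nud} then gives
\[
\nu^\circ(\sqrt u\, s)_{[2d-2]} = e^{-2\pi u h(s)} \cdot Q(u), \qquad Q(u) = (2\pi i)^{d-1} \sum_{j=1}^d \bigwedge_{i\neq j}\bigl(\chern(L) - i u\,\alpha_i\bigr),
\]
with $Q(u) = \sum_{k=0}^{d-1} Q_k\, u^k$ a polynomial in $u$ whose coefficients are smooth $(d-1,d-1)$-forms on $X$.

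Claims (1) and (2) are then essentially immediate. On $X \smallsetminus Z(s)$ the exponential decay of $e^{-2\pi u h(s)}$ makes each elementary integral $\int_1^\infty u^{k-1} e^{-2\pi u h(s)}\,du$ converge absolutely and vary smoothly in $h(s) > 0$, proving smoothness of $\mathfrak{g}^\circ(s)$ off $Z(s)$. For $k \ge 1$ the identity
\[
\int_1^\infty u^{k-1} e^{-x u}\, du = \frac{\Gamma(k)}{x^k} - \int_0^1 u^{k-1} e^{-x u}\, du
\]
(with $x = 2\pi h(s)$) isolates a pure $1/h(s)^k$ singularity with smooth coefficient, while the $k=0$ integral is the exponential integral and contributes $-\log h(s)$ plus a smooth function. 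Rewriting each $1/h(s)^k$ as $h(s)^{d-1-k}/h(s)^{d-1}$ assembles the polar part into $a(s)/h(s)^{d-1}$ with $a(s)$ a smooth form, and collects the logarithmic part into $b(s)\log h(s)$ with $b(s)$ a smooth multiple of $Q_0$.

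The main obstacle is claim (3): showing that the apparent pole of order $d-1$ in \eqref{green singularities} becomes merely logarithmic on the blowup $\pi\colon \tilde{X}\to X$ of $Z(s)$, assuming $s$ is smooth. In coordinates with $Z(s) = \{z_1 = \cdots = z_d = 0\}$ and a standard chart $z_1 = w_1$, $z_i = w_1 w_i$ for $2 \le i \le d$, one has $\pi^* h(s) = |w_1|^2 \rho$ with $\rho$ smooth and nonvanishing, so $b(s) \log h(s)$ is visibly log along the exceptional divisor $\{w_1 = 0\}$. The pullback
\[
\pi^*\alpha_i = |w_i|^2\, dw_1 \wedge d\bar w_1 + w_i\bar w_1\, dw_1 \wedge d\bar w_i + w_1 \bar w_i\, dw_i \wedge d\bar w_1 + |w_1|^2\, dw_i \wedge d\bar w_i
\]
carries one term of maximal $|w_1|^2$-weight and three lower-weight terms each containing a $dw_1$ or $d\bar w_1$ factor. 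In the wedge $\bigwedge_{i\neq j} \pi^*\alpha_i$, antisymmetry permits at most one $dw_1$ and at most one $d\bar w_1$ factor to survive; a direct bookkeeping then shows that every nonzero monomial carries enough compensating $|w_1|$-powers so that after dividing by $\pi^* h(s)^{d-1} = |w_1|^{2(d-1)} \rho^{d-1}$ and rewriting $\bar w_1\, dw_1 = |w_1|^2 (dw_1/w_1)$ etc., the result becomes a smooth form times at most one factor $dw_1/w_1$ and at most one $d\bar w_1/\bar w_1$. The pattern is transparent in the model case $d=2$, where $\pi^*(dz_1 \wedge d\bar z_1 + dz_2 \wedge d\bar z_2)/(|z_1|^2 + |z_2|^2)$ unwinds into $|w_1|^{-2}\,dw_1 \wedge d\bar w_1$ plus log-growth corrections involving $dw_1/w_1$ and $d\bar w_1/\bar w_1$; the general case is a similar but more involved accounting of $w_1$-weights.

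For claim (4), the Green equation, I will appeal to the transgression identity of Bismut--Gillet--Soul\'e in the form derived by Garcia--Sankaran \cite{GS}, namely a formula expressing $dd^c \nu^\circ(\sqrt u\, s)_{[2d-2]}$ as $-\frac{d}{du}\varphi^\circ(\sqrt u\, s)_{[2d]}$ modulo exact forms. Integrating from $u=1$ to $u=\infty$ converts $dd^c[\mathfrak{g}^\circ(s)]$ into boundary terms at the two endpoints. The boundary at $u=1$ produces $[\omega^\circ(s)]$ by definition, while outside $Z(s)$ the $u\to\infty$ contribution vanishes by exponential decay, so the remaining piece is a current supported on $Z(s)$; a standard Poincar\'e--Lelong computation using the smoothness of $s$ identifies it as $\delta_{Z(s)}$ with unit multiplicity. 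I expect the wedge-antisymmetry bookkeeping in the proof of claim (3) to be the delicate part of the whole argument.
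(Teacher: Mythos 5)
Your proposal follows essentially the same route as the paper: expand $\nu^\circ(\sqrt{u}\,s)_{[2d-2]}$ as $e^{-2\pi u h(s)}$ times a polynomial in $u$ with smooth form coefficients (the paper's $\eta_k(s)$, via the homogeneity $\eta_k(ts)=|t|^{2k}\eta_k(s)$), evaluate the resulting elementary integrals to isolate the $h(s)^{-k}$ and $\log h(s)$ singularities, verify \eqref{green is log} by a weight count in local coordinates on the blowup, and obtain the Green equation from the Bismut--Gillet--Soul\'e transgression identity as in Proposition 2.2 of \cite{GS}. The only real difference is that you actually carry out the blowup bookkeeping (correctly) where the paper defers it to "a local calculation," so the proposal is correct.
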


\begin{proof}
First assume  $d=1$, so that $s$ is a nonzero section of $L^\vee$.
Plugging  \eqref{nu2} into \eqref{green def} yields
\begin{equation}\label{simplest green}
\mathfrak{g}^\circ( s) 
 =  \int_1^\infty  e^{-2\pi u h(s)}  \, \frac{du}{u}    = E_1( 2\pi h(s) ), 
\end{equation}
where
\begin{equation}\label{Ei}
E_1(x) = \int_1^\infty e^{-xu}  \, \frac{du}{u}
= -\log( x) -\gamma - \sum_{k=1}^\infty \frac{(-x)^k}{k\cdot k!}.
\end{equation}
This gives a more precise version of \eqref{green singularities}, which will be essential later.
 
 Now suppose $d>1$.
 For each $1\le j\le d$  abbreviate
\begin{equation}\label{basic eta}
\eta( s_j )   =  -i   \cdot  \frac{ \partial h(s_j) \wedge \overline{\partial} h(s_j) }{ h( s_j ) }    
\in E_X^{1,1},
\end{equation}
so that  \eqref{phi2} becomes
\[
\varphi^\circ(s_j)_{[2]}  =  2 \pi i  e^{-2\pi h(s_j)}  \left(     \chern(L)    + \eta( s_j ) \right), 
\]
and  \eqref{phid} and \eqref{nud} imply
\begin{align*}
\nu^\circ(s)_{[2d-2]} 
&= 
 (2 \pi i)^{d-1} e^{-2\pi h(s)}  
 \sum_{j=1}^d  \underbrace{ (      \chern(L) +  \eta(s_1 ) ) \wedge \cdots \wedge     (   \chern(L)   +  \eta(s_d) )}_{\mathrm{omit\ } j^\mathrm{th} \mathrm{\ factor}} .
\end{align*}
Expanding out the wedge products in each term, we  rewrite this as
\begin{equation}\label{homogeneous nu}
\nu^\circ(s)_{[2d-2]}  = e^{-2\pi h(s)}   \sum_{k=0}^{d-1}  \eta_k( s )  ,
\end{equation}
in which  each 
$
\eta_k(s) \in E_X^{d-1,d-1}
$
 is (up to multiplication by a constant) the wedge product of $\chern(L)^{d-k-1}$ with a sum of $k$-fold wedges of $\eta(s_1),\ldots, \eta(s_d)$.
For  any $t\in \C$ we have $\eta(t s_j ) = |t|^2 \eta(s_j)$, and hence 
\begin{equation}\label{eta homo}
\eta_k (t s) = |t|^{2k} \eta_k( s ).
\end{equation}

Plugging \eqref{homogeneous nu} into \eqref{green def} results in
\begin{align*}
( -2\pi i )^{d-1}    \mathfrak{g}^\circ( s )  
   & =   
   \sum_{k=0}^{d-1}   \eta_k(   s )  \int_1^\infty     u^k     e^{-2\pi  u  h( s )}       \, \frac{du}{u} .
  \end{align*}
If $k>0$, a calculus exercise  shows that  
\begin{equation}\label{exp integral}
   \int_1^\infty    u^k   e^{-  u  x }       \,  \frac{du}{u}    = 
 \frac{    e^{-x}  \cdot (k-1)!   }{x^k}   \cdot 
   \sum_{i=0}^{k-1}  \frac{    x ^i } {  i!}  .
\end{equation}
Rewriting this as $e^{-x} x^{-k}P_k(x)$  for some polynomial $P_k(x)$, we obtain
\begin{equation}\label{green sing}
( -2\pi i )^{d-1}  \mathfrak{g}^\circ( s )  
  = 
\eta_0(s)  \cdot  E_1( 2\pi h(s) )   +   e^{- 2\pi h(s) }  \sum_{k=1}^{d-1}   \frac{  \eta_k(s)  }{ h(s)^k }  \cdot P_k( h(s) )    .
  \end{equation}
The equality \eqref{green singularities}  follows immediately by putting all terms in the sum over the common denominator $h(s)^{d-1}$, and using \eqref{Ei}.

Assuming now that $s$ is smooth, we establish \eqref{green is log}.
Near any point  $x \in Z(s)$ we may choose an open neighborhood $U$ over which the line bundle $L^\vee$ admits a trivializing section $\sigma$.  Each component of $s=(s_1,\ldots, s_d)$ then has the form 
\[
s_i = z_i \cdot \sigma
\]
for some  holomorphic function $z_i$, and $h(s_i)  = f \cdot  | z_i|^2$ where $f = \| \sigma\|^2$ is a smooth function on $U$ valued in the positive real numbers.  

The smoothness of the section $s$ implies that $z_1,\ldots, z_d$ can be completed to a system of local coordinates $z_1,\ldots, z_{\dim(X)}$ on (a possibly smaller)  $U$.
In these  coordinates the cycle $Z(s)\cap U$ is defined by $z_1=\cdots=z_d =0$.  Moreover, 
\begin{equation}\label{h coords}
h(s)|_U=  f  \cdot( | z_1|^2+\cdots + |z_d|^2 ), 
\end{equation}
and the $(1,1)$-form \eqref{basic eta}  can be expressed as 
\begin{align}
\eta( s_j )|_U   & = |z_j |^2   \wedge\mathrm{smooth}   +z_j \, d \overline{z}_j  \wedge\mathrm{smooth}  \label{eta coords}  \\
& \quad + \overline{z}_j \, d z_j  \wedge\mathrm{smooth} +  d z_j  \wedge d \overline{z}_j  \wedge\mathrm{smooth} , \nonumber
\end{align}
where each ``smooth" is some smooth form of the appropriate bi-degree.

Now consider the pullback of \eqref{green sing} to the  blow-up of $U$ along $Z(s) \cap U$.
This blow-up is  isomorphic to  the  submanifold 
\[
V  \subset U \times \mathbb{P}^{d-1}
\]
defined by $\dot{w}_j z_i = \dot{w}_i z_j$ for all $ 1 \le i,j \le d$, where $\dot{w}_1,\ldots, \dot{w}_d$ are the homogeneous coordinates on $\mathbb{P}^{d-1}$.  It is covered by  open subsets $V_1 ,\ldots, V_d$, with   $V_i  \subset V$  defined by the condition $\dot{w}_i \neq 0$.

For ease of notation, let's work on the open subset $V_1 \subset V$ where $\dot{w}_1\neq 0$, and denote by $\pi_1 : V_1 \to U$ the projection.   On  $V_1$ we have coordinates 
\[
z_1,w_2,\ldots, w_d , z_{d+1},\ldots, z_{\dim(X)},
\]
and the functions $z_2,\ldots, z_d$ are expressed in these coordinates as
\begin{equation}\label{blow-up swap}
z_j = z_1 w_j.
\end{equation}
In particular,  the preimage of $Z(s) \cap U$ under $\pi_1 : V_1 \to U$ is defined by the single equation $z_1=0$.

Plugging \eqref{blow-up swap} into \eqref{h coords} and \eqref{eta coords}, we find that 
\[
 \pi_1^* h(s)   =  \phi \cdot  |z_1|^2  
\]
for $\phi$ a smooth function on $V_1$ valued in the positive real numbers, and
\begin{align*}
\pi_1^* \eta( s_j )   & = |z_1 |^2   \wedge\mathrm{smooth}   +z_1 \, d \overline{z}_1  \wedge\mathrm{smooth}    \\
& \quad + \overline{z}_1 \, d z_1  \wedge\mathrm{smooth} +  d z_1  \wedge d \overline{z}_1  \wedge\mathrm{smooth} .
\end{align*}
Recalling the discussion surrounding \eqref{divisor logforms}, it follows that  the pullback of $\eta(s_j)  h(s)^{-1} $    has logarithmic growth along  $\pi_1^* Z(s) \subset V_1$, for every $1\le j \le d$.

The pullback  to $V_1$ of each   $\eta_k(s)  h(s)^{-k}$ appearing in  \eqref{green sing}   has logarithmic growth along 
$\pi_1^* Z(s)$, because each is a sum of  wedge products of smooth forms and the $\eta(s_j)   h(s)^{-1}$ just analyzed.
Similarly, \eqref{Ei} implies that singularities  of   $\eta_0(s) E_1(2\pi h(s))$ are the same as those of 
$\log h(s)$, and so the pullback of this form also has logarithmic growth along $\pi_1^* Z(s)$.

Of course the same analysis applies on each of the open subsets $V_i \subset V$, proving that the pullback of \eqref{green sing} via the  blowup morphism $V\to U$ has logarithmic singularities along the preimage of $Z(s)\cap U$.   This completes the proof of \eqref{green is log}.

For the  Green equation, see Proposition 2.2 of \cite{GS}.
\end{proof}

%%%%%%%%%%%%%%%%%%%%%%%%%%%%%%%%

\subsection{The star product formula}

%%%%%%%%%%%%%%%%%%%%%%%%%%%%%%%%

Suppose $G_1$ and $G_2$ are currents on $X$ satisfying the Green equations
\[
dd^c G_i + \delta_{Z_i} = [ \omega_i ] 
\]
for analytic cycles $Z_1$ and $Z_2$ of codimensions $d_1$ and $d_2$  intersecting properly.
Suppose also that  $G_2=[g_2]$ is the current defined by a smooth form $g_2$ on $X \smallsetminus Z_2$,  locally integrable on $X$.  
The form $g_2$ is then uniquely determined by $G_2$, and  we define
\[
G_1 \star G_2=  \delta_{Z_1} \wedge G_2 + G_1 \wedge \omega_2 \in D_X^{d_1+d_2-1,d_1+d_2-1},
\]
\emph{provided that} the integral
\[
( \delta_{Z_1} \wedge G_2)(\phi)  = \int_{Z_1}  g_2\wedge \phi 
\]
converges for all $\phi \in {}_c E_X^\bullet$ of the appropriate degree.

\begin{remark}
Note that we understand the star product to be a current on $X$, not an element of the space of currents modulo currents of the form $\partial a+\overline{\partial} b$.  Because of this, the star product is neither commutative nor associative, and in fact it may be that $G_1\star G_2$ is defined while $G_2\star G_1$ is not.
\end{remark}

\begin{remark}\label{rem:star switch}
Keeping the previous remark in mind, we caution the reader that we are using the convention for star products opposite to \cite{SouleBook} and \cite{GS}: 
our $G_1 \star G_2$ is their $G_2 \star G_1$.  
\end{remark}

\begin{remark}
The expression  $G_1\star ( G_2 \star G_3)$ does not make sense, as $G_2\star G_3$ is not represented by a locally integrable form (even if $G_2$ and $G_3$ are).  
We therefore understand 
\begin{align*}
G_1\star G_2 \star G_3 & = (G_1\star G_2) \star G_3 \\
G_1\star G_2 \star G_3 \star G_4 &= ((G_1\star G_2) \star G_3) \star G_4 \\
& \vdots 
\end{align*}
provided that each star product on the right  is defined.
\end{remark}

Fix a smooth tuple $s=(s_1,\ldots, s_d)$ with $s_i \in H^0(X,L^\vee)$.
If we write   $d=k+\ell$ with $k,\ell >0$,  and  express $s=(p,q)$ as the concatenation of the smooth tuples
\[
p=(s_1,\ldots, s_k ) \quad \mbox{and} \quad q = ( s_{k+1}, \ldots, s_d ) ,
\]
then  $Z(s) = Z(p) \times_X Z(q)$ as analytic spaces.

\begin{lemma}\label{lem:stars make sense}
If $G(p) \in D_X^{  k-1,k-1 }$ is any Green current   for $Z(p)$,  the star product $G(p) \star \mathfrak{g}^\circ(q)$ is defined.
\end{lemma}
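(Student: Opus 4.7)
The plan is to verify the two conditions required for the star product $G(p) \star \mathfrak{g}^\circ(q)$ to be defined: (i) the cycles $Z(p)$ and $Z(q)$ intersect properly, and (ii) $\mathfrak{g}^\circ(q)$ is represented by a locally integrable form $g_q$ on $X$ such that the fiber integral
\[
\int_{Z(p)} g_q \wedge \phi
\]
converges for every compactly supported test form $\phi$ of complementary degree. (The other summand $G(p) \wedge \omega^\circ(q)$ appearing in the star product is automatically well defined, as it is the product of a current with the smooth form $\omega^\circ(q)$.)

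By Remark \ref{rem:CM1}, smoothness of $s=(p,q)$ at every point of $Z(s)$ implies that $p$ and $q$ are also smooth. Hence $Z(p)$, $Z(q)$, and $Z(s)$ are complex submanifolds of $X$ of codimensions $k$, $\ell$, and $k+\ell=d$ respectively (Remark \ref{rem:CM3}); in particular $Z(p)$ and $Z(q)$ meet properly along $Z(s)$. Proposition \ref{prop:general green} applied to $q$ provides a form $\mathfrak{g}^\circ(q) \in E^{\ell-1,\ell-1}_X(\log Z(q))$, which is locally integrable on $X$ by Remark \ref{rem:log currents}; this settles (i) and the local integrability portion of (ii).

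It remains to verify convergence of the fiber integral. The crux is that the restriction of $q$ to the submanifold $Z(p)$ is again smooth: in local coordinates $z_1,\ldots,z_n$ near a point of $Z(s)$, smoothness of $s$ allows us to choose $s_1,\ldots,s_d$ as part of a coordinate system, so that $Z(p)=\{z_1=\cdots=z_k=0\}$ and $q|_{Z(p)}$ cuts out $Z(s)$ smoothly inside $Z(p)$. Inspecting the integral \eqref{green def} that defines $\mathfrak{g}^\circ(q)$, together with the formulas \eqref{nu2}--\eqref{nud}, shows that its restriction to $Z(p)\setminus Z(s)$ equals the form $\mathfrak{g}^\circ(q|_{Z(p)})$ constructed from the ambient manifold $Z(p)$ and the smooth tuple $q|_{Z(p)}$. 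Applying Proposition \ref{prop:general green} intrinsically on $Z(p)$ therefore places this restriction in $E^{\ell-1,\ell-1}_{Z(p)}(\log Z(s))$, and Remark \ref{rem:log currents} guarantees that such forms are locally integrable on $Z(p)$. The integral $\int_{Z(p)} \mathfrak{g}^\circ(q)\wedge \phi$ thus converges for every compactly supported $\phi$, completing the verification. The only subtle point in the argument is the identification of $\mathfrak{g}^\circ(q)|_{Z(p)}$ with $\mathfrak{g}^\circ(q|_{Z(p)})$; this is a formal consequence of the pointwise nature of the construction but is the step on which the entire argument pivots.
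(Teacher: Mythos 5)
Your proposal is correct and follows essentially the same route as the paper: the key step in both is the identification of the restriction of $\mathfrak{g}^\circ(q)$ to $Z(p)$ with the Green form $\mathfrak{g}^\circ(q|_{Z(p)})$ built intrinsically on the complex manifold $Z(p)$ from the smooth tuple $q|_{Z(p)}$, whose local integrability is then supplied by Proposition \ref{prop:general green}. You spell out the preliminary checks (proper intersection, smoothness of the restricted tuple) in more detail than the paper does, but the pivotal argument is identical.
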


\begin{proof}
The pullback of $\mathfrak{g}^\circ(q)$ to  $Z(p)$ is  the  form $\mathfrak{g}^\circ(q|_{Z(p)})$  obtained by applying the construction of Proposition \ref{prop:general green} to the smooth $\ell$-tuple 
\[
q|_{ Z(p) } = ( s_{k+1}|_{Z(p) }, \ldots, s_d |_{ Z(p) } )
\]
 of sections of $L^\vee|_{Z(p)}$ on the complex manifold $Z(p)$.  In particular, this pullback is locally integrable on $Z(p)$.
\end{proof}

In particular, the lemma implies that the star product in the following theorem is defined.

\begin{theorem}[Garcia-Sankaran]\label{thm:star}
We have the equality of currents
\[
\mathfrak{g}^\circ(s) 
=   \mathfrak{g}^\circ(p)  \star  \mathfrak{g}^\circ(q) 
 - \partial [ A(p ; q) ] - \overline{\partial} [ B(p ; q) ]
\]
on $X$, where 
\begin{align*}
A(p;q)
&=  \left( \frac{-1}{2\pi i } \right)^{d-1}  \int_{1 <  v <  u <\infty }
\overline{\partial}  \left(   \nu^\circ (\sqrt{u} p )_{[2k-2]}  \right) \wedge \nu^\circ( \sqrt{v}  q)_{[2 \ell -2]}  
\, \frac{du }{u  }\frac{ dv}{ v } \\
B(p;q)
&= \left(  \frac{-1}{2\pi i } \right)^{d-1}  \int_{1 <  v <  u <\infty  }
   \nu^\circ (\sqrt{u} p )_{[2 k -2]}  \wedge  \partial \left( \nu^\circ( \sqrt{v}  q)_{[2 \ell -2]}  \right)
    \,\frac{du }{u  }\frac{ dv}{ v } 
   \end{align*}
 %(recall that $k$ and $\ell$ are the number of components in the tuples $p$ and $q$, respectively) 
 are smooth forms on $X \smallsetminus (Z(p) \cup Z(q) )$, locally integrable on $X$.
Moreover, there is a resolution of singularities of 
\[
Z(p) \cup Z(q) \subset X
\]
 for which
 \begin{equation}\label{messy resolve}
A(p;q) , B(p;q) \in E_X^\bullet ( \log Z(p) \cup Z(q) ) .
\end{equation}
\end{theorem}

\begin{proof}
Except for the final claim, this is Theorem  2.16 of \cite{GS}, modified as per Remark \ref{rem:star switch}.
For those authors $X$ is a particular hermitian symmetric domain, but the same argument works on any complex manifold.

It remains to prove \eqref{messy resolve}.  Construct resolutions of singularities 
\[
(X',D') \to (Y',E') \map{r'}  (X,Z(p))
\]
and
\[
(X'', D'') \to (Y'',E'') \map{r''} (X,Z(q)) 
\]
by  taking $Y'$ and $Y''$ to be the blowups of $X$ along $Z(p)$ and $Z(q)$, respectively.
Then let $X'$ and $X''$ be the blowups of $Y'$ and $Y''$ along the  preimages of $Z(s)=Z(p) \cap Z(q)$ under $r'$ and $r''$.

Now fix a resolution of singularities   $(X^\dagger, D^\dagger)$  of the analytic subspace 
\[
D' \times_X D'' \subset X'\times_X X''.
\]
The natural map $X^\dagger \to X$ is then  a resolution of singularities 
 \[
 (X^\dagger ,  D^\dagger) \to (X,Z(p) \cup Z(q)) ,
 \]
 and we claim that  \eqref{messy resolve} is satisfied for any such  choice.
 The proof will require the following elementary lemma.

\begin{lemma}\label{lem:easy resolve}
The pullback  of  $h(p)/h(s)$  to 
\[
X' \smallsetminus D'  \iso X \smallsetminus Z(p)
\]
 extends  smoothly to $X'$, and the pullback of $h(q)/h(s)$
 \[
 X'' \smallsetminus D'' \iso X \smallsetminus Z(q) 
 \]
  extends  smoothly to $X''$.
In particular,  both pullbacks  to 
\[
X^\dagger \smallsetminus D^\dagger \iso X \smallsetminus (Z(p) \cup Z(q) )
\]
  extend  smoothly to $X^\dagger$.
\end{lemma}

\begin{proof}
The function $h(p)/h(s)$ is smooth on the open complement of $Z(s) \subset X$, so it suffices to analyze its singularities on an open neighborhood  of a point of $Z(s)$.  

 As in the proof of Proposition \ref{prop:general green}, we use the smooth tuple $s=(p,q)$ to choose local  coordinates $z_1,\ldots, z_{\dim(X)}$    in such a way that 
 \begin{align*}
 h(p) & = f \cdot ( | z_1|^2+ \cdots + | z_k|^2 )  \\
  h(q) & = f \cdot ( | z_{k+1} |^2+ \cdots + | z_d |^2 ),
 \end{align*}
where $f$ is a smooth function valued in the positive real numbers. In particular
\begin{equation}\label{blow fraction}
\frac{h(p)}{h(s)}  = \frac{ | z_1|^2+ \cdots + | z_k|^2}{ | z_1|^2+ \cdots + | z_k|^2 + | z_{k+1}|^2 + \cdots + |z_d|^2 }.
\end{equation}

Using the explicit description of  blow-ups in coordinates,  as in the proof of Proposition \ref{prop:general green}, it is easy to see that if one first blows-up along the  cycle $Z(p)$ defined by $z_1=\cdots = z_k=0$, and then blows-up along the preimage of the cycle $Z(s)$ defined by $z_1=\cdots = z_d=0$, the pullback of \eqref{blow fraction} to this double blow-up has no singularities.
This proves the first claim of the lemma.  

The proof of the second is  identical, and the third claim follows from the first two, as the map $X^\dagger \to X$ factors through both $X'$ and $X''$.
\end{proof}

Continuing with the proof of Theorem \ref{thm:star},  abbreviate $\hbar = 2\pi h$, and  expand 
\[
\nu^\circ( p)_{[2k-2]} 
= 
 e^{- \hbar(p)}   \sum_{a=0}^{k-1}    \eta_a(p) 
 \quad \mbox{and} \quad 
 \nu^\circ( q)_{[2\ell-2]} 
= 
 e^{- \hbar(q)}   \sum_{b=0}^{\ell -1}    \eta_b(q) 
\]
as in \eqref{homogeneous nu}.
Plugging this expansion into the definitions of $A(p;q)$ and $B(p;q)$, and noting that $\eta_0(p)$ and $\eta_0(q)$ are closed, we find that 
\begin{align}
(- 2\pi i )^{d-1}  A(p;q)   
 & = 
  \sum_{\substack{   0 < a < k \\ 0 \le b <  \ell   }  }
F_{a,b} ( \hbar(p) , \hbar(q) )  \cdot   \overline{\partial}  \eta_a(p)   \wedge   \eta_b (q)    \nonumber  \\
&\quad    -       \sum_{   \substack{ 0 \le a < k \\ 0 \le b < \ell }   }  
F_{ a+1 , b } ( \hbar(p) , \hbar(q) )  \cdot   \overline{\partial} \hbar(p) \wedge   \eta_a (p)     \wedge   \eta_b (q)   \label{first A}  \\
(- 2\pi i )^{d-1}   B(p;q) 
  & = 
 \sum_{   \substack{ 0 \le a < k \\ 0 < b < \ell }   } 
F_{a,b}   ( \hbar(p) , \hbar(q) )  \cdot   \eta_a(p) \wedge   \partial \eta_b(q)   \nonumber  \\
&\quad   -          \sum_{   \substack{ 0 \le a < k \\ 0 \le b < \ell }   } 
F_{a,b +1 }   ( \hbar(p) , \hbar(q) )  \cdot   \eta_a(p)   \wedge   \partial \hbar(q) \wedge  \eta_b (q)   ,
\label{first B}
\end{align}
in which we have set
\begin{align*}
F_{a , b }(x,y)   & = \int_{ 1<v <  u  <\infty } u^a v^b e^{-ux} e^{- vy} \, \frac{du}{u} \frac{dv}{v}  \\
  &= \int_1^\infty  v^{a+b} e^{- vy} \left(  \int_1^\infty  u^a e^{-uvx}  \, \frac{du}{u}   \right) \frac{dv}{v} .
\end{align*}

If $a,b >0$, then  \eqref{exp integral} applies to the inner integral, leaving 
\begin{equation}\label{I third}
F_{a,b} (x,y) =    \sum_{i=0}^{a-1}    \frac{   (a-1)!   }{x^{a-i} \cdot i! }   
  \int_1^\infty  v^{b+i} e^{- v ( x+y) }
\frac{dv}{v} .
\end{equation}
Applying  \eqref{exp integral} once again  leaves
\begin{equation}\label{FabEval}
F_{a,b} (x,y)  
  =  e^{-x-y }     \sum_{i=0}^{a-1}  
 \frac{ \mbox{poly}(x,y)  }{ x^{a-i} \cdot (x+y)^{b +i }   } ,
\end{equation}
where in each term  $\mbox{poly}(x,y)$ is some polynomial (depending on $i$) in $x$ and $y$  whose exact value is irrelevant to us.
If $a>0$ and $b=0$ one argues in  the same way, except that the integral appearing in the $i=0$ term of  \eqref{I third} is $E_1(x+y)$.  Thus
 \begin{equation}\label{FabEval_a0}
F_{a,0}(x,y)  
 = 
 \frac{  E_1(x+y)  \cdot   (a-1)!   }{x^a  }  
 +   e^{-x-y } 
\sum_{i=1}^{a-1}    \frac{ \mbox{poly}(x,y) }{ x^{a-i} \cdot (x+y)^{ i }   }  . 
\end{equation} 
If  $a=0$ and $b>0$ then, again using \eqref{exp integral},  rewrite $F_{0,b}(x,y)$ as
\begin{equation}\label{pre-F0b}
 \int_1^\infty   \left(  \int_1^\infty v^b  e^{- v ( y + ux ) }  \,  \frac{dv}{v}   \right)   \frac{du}{u}   
 =   \sum_{i=0}^{b -1}      \frac{  (b -1)!  } {  i!}     \int_1^\infty    \frac{    e^{-( y + ux )}    }{( y + ux )^{b-i }  }  \,    \frac{du}{u}   .
\end{equation}
The  integral  on the right can again  be evaluated using elementary methods:  for any $r\ge 1$ we have
\begin{align*} \lefteqn{
   \int_1^\infty    \frac{    e^{-( y + ux )}    }{( y + ux )^r  }  \,    \frac{du}{u}   } \\
   & = 
     \frac{    e^{-y}    }{ y^r   }  E_1(x)    +       \sum_{j=1}^r    \frac{ (-1)^{j} } { (j-1)! }    \frac{ E_1(x+y)  }{ y^{ b-i-j+1 }   }     +  \sum_{j=2}^r       \frac{  e^{-(x+y)} \cdot \mbox{poly}(x,y)   }{ y^{ r-j+1 }  (x+y) ^{j-1} }  .
\end{align*}
Using this, one sees that \eqref{pre-F0b} has the form
\begin{align*}
F_{0,b}(x,y) & = 
  e^{-y} E_1(x)  \cdot  \frac{    \mbox{poly} (y)  }{y^b}      +  E_1(x+y)   \cdot \frac{   \mbox{poly}(y) }{y^b}     \\
  & \quad +      \sum_{j=1}^{b-1 }      
     \frac{  e^{-(x+y)}  \cdot  \mbox{poly}(x,y)  }{ y^{ b - j }  (x+y) ^j } .
\end{align*}

With these explicit formulas for the $F_{a,b}$ in hand,  let us  consider the behavior singularities of  \eqref{first A} after pullback via  $X^\dagger \to X$.

For the first sum of \eqref{first A}, one can use   \eqref{FabEval} and  \eqref{FabEval_a0} to write each term in the form
\begin{align*}\lefteqn{ 
F_{a,b}   \big( \hbar(p) , \hbar(q) \big)  \cdot   \overline{\partial}  \eta_a(p) \wedge \eta_b(q)    } \\
& =     
  \frac{ \overline{\partial}    \eta_a(p)   }{ \hbar(p)^{a}   } 
 \wedge 
 \frac{    \eta_b(q)   }{  \hbar(q) ^{b  }   }  
   \wedge   \left(   \frac{   \hbar(q) ^{b  }  }{  \hbar(s) ^{b }   }   \sum_{i=0}^{a-1}  
\phi_i  \frac{    \hbar(p)^{i}  } {  \hbar(s) ^{ i }  }  \right) \\
& \quad +   E_1( \hbar(s) )  \wedge   \frac{ \overline{\partial}    \eta_a(p)   }{ \hbar(p)^{a}   }   \wedge \psi .
\end{align*}
Here each $\phi_i$ is a smooth function on $X$, and $\psi$ is a smooth form (in fact, $\psi=0$ except when $b=0$).    The singularities of every form appearing here are understood:
\begin{itemize}
\item
The function in parentheses pulls back to a smooth function on $X^\dagger$, by Lemma \ref{lem:easy resolve}.  
\item
By the  analysis of singularities in the proof of Proposition \ref{prop:general green}, 
the pullback of  $\overline{\partial} \eta_a(p)/ \hbar(p)^a$ to the blowup along $Z(p) \subset X$  has logarithmic growth along the preimage of $Z(p)$, hence its pullback  to $X'$ has logarithmic growth along $D'$.
\item
Again by the proof of Proposition \ref{prop:general green}, the pullback of  $\eta_b(q)  / \hbar(q)^b$  to   the blowup along $Z(q) \subset X$  has logarithmic growth along the preimage of $Z(q)$, hence its pullback to  $X''$ has logarithmic growth along $D''$.
\item
By \eqref{Ei},  the function $E_1(\hbar(s))$    differs from $-\log  \hbar(s) $ by a smooth function.  Using the coordinates from the proof of Lemma \ref{lem:easy resolve}, one sees that $-\log  \hbar(s) $ pulls back to a function on $X'$ with logarithmic growth along $D'$, and also to a function on $X''$ with logarithmic growth along $D''$. 
\end{itemize}
 It follows that every term in the first summation in \eqref{first A} pulls back to a form on $X^\dagger$ with logarithmic growth along $D^\dagger$.

For the second sum of \eqref{first A}, one similarly uses  \eqref{FabEval} and  \eqref{FabEval_a0} to write each term as
\begin{align*}\lefteqn{ 
F_{a+1,b}   \big( \hbar(p) , \hbar(q) \big)  \cdot   \overline{\partial}  \hbar(p) \wedge  \eta_a(p) \wedge \eta_b(q)  } \\
& =     
\frac{  \overline{\partial}  \hbar(p)    }{ \hbar(p)    } \wedge   \frac{    \eta_a(p)   }{ \hbar(p)^{a}   } 
 \wedge 
 \frac{    \eta_b(q)   }{  \hbar(q) ^{b  }   }  
   \wedge   \left(   \frac{   \hbar(q) ^{b  }  }{  \hbar(s) ^{b }   }   \sum_{i=0}^{a}  
\phi_i  \frac{    \hbar(p)^{i}  } {  \hbar(s) ^{ i }  }  \right) \\
& \quad +   E_1( \hbar(s) )  \wedge  \frac{  \overline{\partial}  \hbar(p)    }{ \hbar(p)    }  \wedge   \frac{    \eta_{a }(p)   }{ \hbar(p)^{a}   }   \wedge \psi .
\end{align*}
The only new expression appearing here  is $ \overline{\partial}  \hbar(p) / \hbar(p)$.  As in the proof of Proposition \ref{prop:general green}, one can find local coordinates $z_1,\ldots, z_{\dim(X)}$ near a point of $Z(p) \subset X$ such that 
\[
h(p) = f \cdot ( | z_1|^2+\cdots + |z_k|^2) 
\]
for some smooth function $f$.  In these coordinates
\[
\frac{  \overline{\partial}  \hbar(p)    }{ \hbar(p)    }
= \overline{\partial} f + f \wedge \frac{ z_1 d\overline{z}_1+ \cdots  + z_k d\overline{z}_k }{| z_1|^2+\cdots + |z_k|^2} .
\]
The pullback of this form to the blow-up along $Z(p) \subset X$, which is defined by $z_1=\cdots = z_k =0$, has logarithmic growth along the preimage of $Z(p)$, as one immediately sees from the explicit coordinates on the blow-up given in the proof of Proposition \ref{prop:general green}.  Hence the pullback of $ \overline{\partial}  \hbar(p) / \hbar(p)$ to $X'$ has logarithmic growth along $D'$, hence all terms in the second sum in \eqref{first A} pull back to forms on $X^\dagger$ with logarithmic growth along $D^\dagger$.

This proves that \eqref{first A} satisfies \eqref{messy resolve}, and the argument for \eqref{first B} is entirely similar.
\end{proof}

As a special case of Theorem \ref{thm:star},  
\begin{align*}
\mathfrak{g}^\circ(s_1,\ldots, s_d) 
& =  \mathfrak{g}^\circ(s_1,\ldots, s_{d-1})  \star  \mathfrak{g}^\circ(s_d)   \\
& \quad 
 - \partial [ A(s_1,\ldots, s_{d-1} ; s_d) ] - \overline{\partial}[ B(s_1,\ldots, s_{d-1} ; s_d) ] .
\end{align*}
Repeated application of this results in
\begin{equation}\label{star iterate}
 \mathfrak{g}^\circ(s)      = 
 \mathfrak{g}^\circ(s_1)  \star   \cdots \star  \mathfrak{g}^\circ (s_d)  
  - \partial  [   \mathfrak{a}(s) ]  - \overline{\partial} [  \mathfrak{b}(s) ] 
\end{equation}
for locally integrable forms
\begin{align*}
\mathfrak{a}(s) 
 &  = \sum_{r=2}^d      A(s_1,\ldots ,   s_{r-1} ; s_r) \wedge \omega^\circ(s_{r+1}) \wedge \cdots  \wedge \omega^\circ(s_d)   \\
\mathfrak{b}(s) 
& = 
\sum_{r=2}^d    B(s_1,\ldots ,  s_{r-1}; s_r) \wedge \omega^\circ(s_{r+1}) \wedge \cdots  \wedge \omega^\circ(s_d)  .
\end{align*}

\subsection{Explicit logarithmic expansions}
\label{ss:explicit expansions}

%%%%%%%%%%%%%%%%%%%%%%%%%%%%%%%%

We now return to the setting of \S \ref{ss:hu}, so that $X_0\subset X$ is a closed complex submanifold, but now assume that $X_0$  is presented to us in a particular way:
there is a holomorphic vector bundle $N \to X$ of dimension $\dim(X)-\dim(X_0)$ and a section
\[
u \in H^0(X,N)
\]
such that $X_0 \subset X$ is defined (as an analytic space) by the equation $u=0$.

This presentation of $X_0\subset X$ identifies
\begin{equation}\label{normal identification}
N_{X_0/X}  \iso N|_{X_0}.
\end{equation}
Indeed, if we denote by $\mathcal{I} \subset \co_X$  the ideal sheaf of holomorphic functions vanishing along $X_0$, then evaluation at $u$ defines an isomorphism $N^\vee \iso \mathcal{I}$.  
Restricting this to $X_0$ yields  an  isomorphism $ N|_{X_0}^\vee \iso \mathcal{I}/\mathcal{I}^2$ of vector bundles on $X_0$, and the normal bundle to $X_0 \subset X$ is (by definition) the dual of the right hand side.

Viewing points of the total space $N\to X$ as pairs $(x,v_x)$ consisting of a point $x\in X$ and a vector $v_x \in N_x$ in the fiber at $x$,  the deformation to the normal bundle of $X_0 \subset X$ can be  identified with the subset
\[
\widetilde{X}  \subset N  \times \C
\]
 of triples   $(x,v_x,t)$ consisting of a point $(x,v_x) \in N$, and a scalar $t \in \C$ satisfying  $t \cdot v_x=u_x.$
The morphisms 
\[
\xymatrix{
{X} & { \tilde{X} } \ar[r]^\tau \ar[l]_\pi  & {\C}  
}
\]
 of  \eqref{deformation diagram} are given by
 $\pi(x,v_x,t) =x$ and $\tau(x,v_x,t)=t$.
This is essentially McPherson's description of the deformation to the normal bundle, as in Remark 5.1.1 of \cite{Fulton}.

As in \S \ref{ss:GS basics},  fix a line bundle $L \to X$ with a hermitian metric $h$.   Any morphism of holomorphic vector bundles
$
y : N \to L^\vee ,
$
determines a section
\begin{equation}\label{degen section}
q = y(u) \in H^0(X,L^\vee)
\end{equation}
vanishing along $X_0$.
We call this the \emph{degenerating section} determined by $y$.
 Like any vector bundle,  $\pi_0 : N_{X_0/X} \to X_0$ acquires a tautological section 
\begin{equation}\label{v_0 taut}
v_0 \in H^0(   N_{X_0/X} ,   \pi_0^* N_{X_0/X} ) 
\end{equation}
after pullback via its own bundle map.
Setting $L_0=L|_{X_0}$, we may restrict  $y: N \to L^\vee$ to a morphism
 \[
 N_{X_0/X} \stackrel{\eqref{normal identification}}{=}  N|_{X_0}  \map{y} L^\vee|_{X_0} = L_0^\vee
 \]
of vector bundles on $X_0$, and then  pull back by $\pi_0:N_{X_0/X} \to X_0$.  
Applying this pullback to the tautological section \eqref{v_0 taut} defines the
 \emph{specialization to the normal bundle} of  the degenerating section \eqref{degen section},   denoted
\begin{equation}\label{very degen section}
\sigma_{X_0/X}(q) =    (\pi_0^*y) (v_0)  \in  H^0(  N_{X_0/X} ,   \pi_0^* L_0^\vee ).
\end{equation}

The  degenerating section \eqref{degen section} and its specialization \eqref{very degen section}  satisfy  the informal relation
\[
\sigma_{X_0/X}(q)   =  \frac{ \pi^* q}{\tau} \Big|_{\tau =0} ,
\]
which we formulate more precisely as the following lemma.

\begin{lemma}\label{lem:degenerate section spec}
For any $q=y(u)$ as above, there is a unique section
\[
\tilde{q} \in H^0( \tilde{X} , \pi^* L^\vee)
\]
satisfying $\tau \cdot \tilde{q} = \pi^* q$.  The pullback of $\tilde{q}$ to  $N_{X_0/X} =\tilde{X}_0$ is $\sigma_{X_0/X}(q)$.
\end{lemma}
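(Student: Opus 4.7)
The plan is to construct $\tilde{q}$ by mimicking the formula $q=y(u)$ on $\tilde{X}$, using the tautological section of $\pi^*N$ that comes for free from the realization $\tilde{X}\subset N\times\C$. Concretely, the assignment $(x,v_x,t)\mapsto v_x$ gives a global section $\tilde{u}\in H^0(\tilde{X},\pi^*N)$, since the fiber of $\pi^*N$ at $(x,v_x,t)$ is canonically $N_x$. The defining equation of $\tilde{X}$ inside $N\times\C$ is precisely the identity $\tau\cdot\tilde{u}=\pi^*u$ in $H^0(\tilde{X},\pi^*N)$. Applying the $\co_{\tilde{X}}$-linear bundle map $\pi^*y\colon\pi^*N\to\pi^*L^\vee$, I would define
\[
\tilde{q}=(\pi^*y)(\tilde{u})\in H^0(\tilde{X},\pi^*L^\vee),
\]
and the required equality $\tau\cdot\tilde{q}=(\pi^*y)(\tau\cdot\tilde{u})=(\pi^*y)(\pi^*u)=\pi^*q$ is then formal.

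For the uniqueness, I would use that $\tilde{X}$ is a complex manifold and $\tau\colon\tilde{X}\to\C$ is a nonconstant holomorphic function whose zero locus $\tilde{X}_0$ is a proper (Cartier) divisor; therefore $\tau$ is not a zero divisor in the structure sheaf, hence not a zero divisor in the locally free sheaf $\pi^*L^\vee$, so any two lifts of $\pi^*q$ along multiplication by $\tau$ must agree. For the second assertion, observe that the isomorphism $\tilde{X}_0\iso N_{X_0/X}$ sends a point $(x,v_x,0)$ (necessarily with $x\in X_0$, since $0\cdot v_x=u_x$ forces $u_x=0$) to the normal vector $v_x$. Under this identification and the canonical equality $\pi^*N|_{\tilde{X}_0}=\pi_0^*\,i_0^*N=\pi_0^*N_{X_0/X}$, the restriction $\tilde{u}|_{\tilde{X}_0}$ is literally the tautological section $v_0$ that appears in \eqref{very degen section}. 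Restricting $\tilde{q}=(\pi^*y)(\tilde{u})$ to $\tilde{X}_0$ and using the compatibility of pullbacks therefore gives $\tilde{q}|_{\tilde{X}_0}=(\pi_0^*y)(v_0)=\sigma_{X_0/X}(q)$.

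I do not anticipate a genuine obstacle here: the lemma is essentially a coordinate-free reformulation of the McPherson model of the deformation to the normal bundle. The only point requiring any care is bookkeeping the identifications $\pi^*N|_{\tilde{X}_0}=\pi_0^*N_{X_0/X}$ and $\pi^*L^\vee|_{\tilde{X}_0}=\pi_0^*L_0^\vee$ so that $\tilde{u}|_{\tilde{X}_0}$ is recognized as the tautological section $v_0$; once this is in place, both claims reduce to the $\co_{\tilde{X}}$-linearity of $\pi^*y$.
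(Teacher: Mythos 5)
Your construction of $\tilde{q}$ as $(\pi^*y)$ applied to the tautological section of $\pi^*N$ satisfying $\tau\cdot(\text{taut.})=\pi^*u$ is exactly the paper's proof, and your restriction argument matches as well. The only addition is your explicit justification of uniqueness via $\tau$ being a non-zero-divisor, which the paper leaves implicit; the proposal is correct.
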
 

\begin{proof}
There is a tautological section 
$
v  \in H^0( \tilde{X} , \pi^*N)
$
 whose fiber at  a point $(x,v_x,t) \in \widetilde{X}$ is  $v_x$.
This section   satisfies  $\tau \cdot v = \pi^* u$,
and its restriction to  $N_{X_0/X}$  is  \eqref{v_0 taut}.
The image of $v$ under the map
\[
H^0( \tilde{X} , \pi^*N)  \map{\pi^* y} H^0( \tilde{X} , \pi^* L^\vee)
\] 
is a section $\tilde{q}$ with the desired properties.
\end{proof}

Now fix a smooth  tuple $s=(s_1,\ldots, s_d)$ with $s_i \in H^0(X,L^\vee)$, and assume 
$s=(p,q)$ is the concatenation of   
\[
p=( p_1,\ldots, p_k ) \quad\mbox{and}\quad q = (q_1 ,\ldots, q_\ell ),
\]
  satisfying the following properties:
\begin{enumerate}
\item
The tuple $p|_{X_0}$ formed from the restrictions 
\[
p_1|_{X_0}  ,\ldots , p_k|_{X_0}  \in H^0(X_0, L_0^\vee) 
\] 
is again smooth; equivalently, the analytic subspace 
\[
Z(p|_{X_0} ) =  Z(p) \times_X X_0 \subset X_0
\]
 is smooth of codimension $k$.
\item
The sections $q_1,\ldots, q_\ell \in H^0(X,L^\vee)$ are the degenerating sections determined by morphisms 
 $ y_1,\ldots, y_\ell : N \to L^\vee$ as above. 
In what follows, we denote by
\[
\sigma_{X_0/X}(q_i)    \in  H^0(  N_{X_0/X} ,   \pi_0^* L_0^\vee )
\]
the section associated to $q_i=y_i(u)$ by \eqref{very degen section}, and by
 \[
\tilde{q}_i \in H^0( \tilde{X} , \pi^* L^\vee)
\]
  the section associated to $q_i=y_i(u)$ by Lemma \ref{lem:degenerate section spec}.
 \end{enumerate}
Our assumptions imply that  $Z(p)$ intersects $X_0$ transversely,  while $X_0 \subset Z(q)$.  
We allow the possibility that $s=p$ or $s=q$.  Note that the tuples $p$ and $q$ are again smooth, by Remark \ref{rem:CM1}. 
We consider the specializations  of $Z(s)$, $Z(p)$, and $Z(q)$ to $N_{X_0/X}$.

 \begin{proposition}\label{prop:intersection cycle spec}
 For $s=(p,q)$ as above, the following properties hold.
 \begin{enumerate}
\item
 We have the equalities
$
\sigma_{X_0/X}( Z(p) )  = \pi_0^* Z( p|_{X_0} ) 
$
and
\[
  \sigma_{X_0/X}( Z( s ) ) = \sigma_{X_0/X}( Z( p ) ) \cdot  \sigma_{X_0/X}( Z( q ) ) ,
\]
of cycles on $N_{X_0/X}$.
 \item
The tuple
$
\tilde{q}   = ( \tilde{q}_1,\ldots, \tilde{q}_\ell )
$
is smooth, and the cycle 
\[
Z(\tilde{q}) \subset \tilde{X}
\]
defined by the  vanishing of its components satisfies the equality
\[
  \sigma_{X_0/X}( Z( q ) )    =Z(\tilde{q}) \cdot N_{X_0/X}  
  \]
  of cycles on $N_{X_0/X}$.
  \item
  The tuple  $\sigma_{X_0/X} ( q )=(\sigma_{X_0/X} ( q_1 ) , \ldots, \sigma_{X_0/X} ( q_\ell ))$   is smooth, and the analytic cycle on $N_{X_0/X}$ defined by the vanishing of its components is equal to $\sigma_{X_0/X}( Z( q ) )$.
\end{enumerate}
All intersections above are understood in the sense of  \cite{Draper}.
  \end{proposition} 
 
 \begin{proof}
  Let $\Delta \subset \C$ be the open unit disk, and abbreviate $n=\dim(X)$ and $m=\dim(X)-\dim(X_0)$.
The smoothness of $s=(p,q)$ implies that we may find a coordinate neighborhood in $X$ near a point of $X_0$ of the form
\[
U\iso \Delta^n=\{ (z_1,\ldots, z_n) : z_i \in \Delta \}
\]
 in such a way that 
\begin{itemize}
\item
the line bundle $L$ is trivial on $U$,
\item
$U_0=X_0\cap U$ is defined by the vanishing of $z_1, \ldots, z_m$,
\item
$p_1=z_{m+1},\ldots, p_k=z_{m+k}$,
\item
$q_1= z_1,\ldots, q_\ell=z_\ell$.
\end{itemize}
The deformation to the normal bundle of $U_0 \subset U$  is identified with 
\[
\tilde{U} = \{ (z_1,\ldots, z_n, w_1,\ldots, w_m,t) \in \Delta^n \times  \C^m \times \C 
 :  z_i = t w_i,  \, \forall 1\le i \le m   \} ,
\]
and $\tilde{q}_i = w_i$ for all $1\le i \le \ell$.  
The normal bundle itself is identified with
\[
N_{U_0/U} =  \{ (0,\ldots, 0, z_{m+1},\ldots, z_n,w_1,\ldots, w_m,0) \in \Delta^n \times  \C^m \times \C   \} ,
\]
and   $\sigma_{X_0/X}(q_i) = w_i$ for all $1\le i \le \ell$.  
The strict transforms of $Z(p)$ and $Z(q)$ are defined by (respectively) the vanishing of  $z_{m+1},\ldots,z_{m+k}$ and the vanishing of $w_1,\ldots,w_\ell$.
Their specializations to $N_{U_0/U}$ are defined by the same equations.
All parts of the proposition follow immediately from computations in these local coordinates.
 \end{proof}

 Now we turn to the  Green current 
 \[
  \mathfrak{g}^\circ(s)  \in E_X^{d-1,d-1}( \log Z(s) ) 
  \]
   of Proposition \ref{prop:general green}, and the similar currents $ \mathfrak{g}^\circ(p)$ and  $\mathfrak{g}^\circ(q)$.
 The following  lemmas are the key to understanding their logarithmic expansions  along $X_0$, and hence their specializations to $N_{X_0/X}$.

 \begin{lemma}\label{lem:nice expansions}
 There are forms  $a ,b ,c  \in E_{\tilde{X}}^{\ell-1,\ell-1}$ such that 
  \begin{equation}\label{degen g pre-expansion}
 \mathfrak{g}^\circ(q) =
   j_t^* \left( \frac{ a  }{ h(\tilde{q})^{\ell-1} }    +   b  \log  h( \tilde{q})    +  c \cdot  \log | \tau |    \right)
\end{equation}
for all $t\in \C\smallsetminus \{0\}$.   If we define currents 
\[
G_0(t) = \left( \frac{ a  }{ h(\tilde{q})^{\ell-1} }    +   b  \log  h( \tilde{q})   \right)  \wedge \delta_{\tilde{X}_t} 
\]
and $G_1(t) = c\wedge \delta_{\tilde{X}_t}$ on $\tilde{X}$,  then 
\[
 j_{t*} [  \mathfrak{g}^\circ(q) ] =  G_0(t) + G_1(t) \log|t|
 \]
 is a logarithmic expansion of $\mathfrak{g}^\circ(q)$ along $X_0$.
 \end{lemma}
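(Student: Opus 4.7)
My plan is to exhibit an explicit form $\Phi$ on $\tilde X$ of the shape
\[
\Phi = \frac{a}{h(\tilde q)^{\ell-1}} + b\log h(\tilde q) + c\log|\tau|, \qquad a,b,c\in E^{\ell-1,\ell-1}_{\tilde X},
\]
satisfying $j_t^*\Phi = \mathfrak{g}^\circ(q)$ for every $t\neq 0$.  The second assertion about the logarithmic expansion of $j_{t*}[\mathfrak g^\circ(q)]$ is then immediate, since $\log|\tau|$ restricts to the constant $\log|t|$ on each fiber $\tilde X_t$.

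To construct $\Phi$, I begin with the closed-form expression for $\mathfrak g^\circ(q)$ extracted in the proof of Proposition \ref{prop:general green},
\[
(-2\pi i)^{\ell-1}\mathfrak g^\circ(q) = \eta_0(q)\,E_1(2\pi h(q)) + \sum_{k=1}^{\ell-1}\frac{\eta_k(q)\,e^{-2\pi h(q)}(k-1)!}{(2\pi h(q))^k}\sum_{i=0}^{k-1}\frac{(2\pi h(q))^i}{i!},
\]
and write down its analog on $\tilde X$: replace each $\eta_k(q)$ by the form $\eta_k(\tilde q)$ obtained by applying \eqref{homogeneous nu} to $\tilde q$ with the pulled-back metric on $\pi^*L^\vee$; replace $h(q)$ by $h(\tilde q)$ in denominators and inside $\log$; and replace $h(q)$ by $|\tau|^2 h(\tilde q)$ in the exponential $e^{-2\pi h(q)}$, in the polynomial $\sum_i(2\pi h(q))^i/i!$, and inside $E_1$.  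Using the scaling identities $j_t^*\tilde q = q/t$, $j_t^* h(\tilde q) = h(q)/|t|^2$, $j_t^*\eta_k(\tilde q) = \eta_k(q)/|t|^{2k}$ and $j_t^*|\tau|^2 = |t|^2$, all factors of $|t|$ cancel (in the $k$-th term the prefactor $|t|^{-2k}$ from $\eta_k(\tilde q)$ is killed by $(|t|^2)^k$ from $1/h(\tilde q)^k$, while in the exponential and inside $E_1$ one has $|t|^2\cdot h(q)/|t|^2 = h(q)$), yielding $j_t^*\Phi = \mathfrak g^\circ(q)$.

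To read off the triple $(a,b,c)$, expand
\[
E_1\bigl(2\pi|\tau|^2 h(\tilde q)\bigr) = -\log(2\pi) - 2\log|\tau| - \log h(\tilde q) - \gamma + f\bigl(2\pi|\tau|^2 h(\tilde q)\bigr),
\]
where $f(x) = -\sum_{j\ge 1}(-x)^j/(j\cdot j!)$ is entire with $f(0)=0$.  The coefficient of $\log|\tau|$ in $\Phi$ is $c = -2(-2\pi i)^{-(\ell-1)}\eta_0(\tilde q)$, the coefficient of $\log h(\tilde q)$ is $b = -(-2\pi i)^{-(\ell-1)}\eta_0(\tilde q)$, and the remainder equals $a/h(\tilde q)^{\ell-1}$ for a uniquely determined $a$.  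Since $\eta_0(\tilde q)$ is a scalar multiple of $c_1(\pi^*L)^{\ell-1}$, both $b$ and $c$ are smooth on $\tilde X$.  For $a$: each term of the form $\eta_k(\tilde q)/h(\tilde q)^k$ with $1\le k\le\ell-1$ is multiplied by $h(\tilde q)^{\ell-1}$ to give $\eta_k(\tilde q)\,h(\tilde q)^{\ell-1-k}$, and each $\eta_k(\tilde q)$ extends smoothly across $Z(\tilde q)$ by the same local calculation that makes $\eta(s_j)$ smooth for any smooth section $s_j$ (the apparent $1/h(\tilde q_j)$ is cancelled because $h(\tilde q_j)$ vanishes to order two as $|\tilde q_j|^2$; the tuple $\tilde q$ is smooth by Proposition \ref{prop:intersection cycle spec}).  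This verifies $a,b,c\in E^{\ell-1,\ell-1}_{\tilde X}$.

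Finally, pushing the identity $\mathfrak g^\circ(q) = j_t^*\Phi$ forward by $j_t$ gives
\[
j_{t*}[\mathfrak g^\circ(q)](\phi) = \int_{\tilde X_t}\Phi\wedge\phi = G_0(t)(\phi) + G_1(t)(\phi)\log|t|,
\]
which is the claimed expansion.  The conditions of Definition \ref{def:log expansion} are checked as follows: the expansion is trivially locally finite; the function $t\mapsto G_1(t)(\phi) = \int_{\tilde X_t} c\wedge\phi$ is smooth in $t$ (hence H\"older at $0$) because $c$ is smooth; the function $t\mapsto G_0(t)(\phi)$ is continuous at $0$ by the continuity of fiber integration (Remark \ref{rem:fiber integrals}) applied to the locally integrable form $(a/h(\tilde q)^{\ell-1} + b\log h(\tilde q))\wedge\phi$, whose singular locus $Z(\tilde q)$ meets every $\tilde X_t$ properly in a smooth cycle (Proposition \ref{prop:intersection cycle spec}); and each $G_i(0)$ is supported on $\tilde X_0$, hence lies in the image of $j_{0*}$.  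The only genuinely delicate step is the smoothness of $a$ across $Z(\tilde q)$, which is the main obstacle and is handled by the cancellation described in the previous paragraph.
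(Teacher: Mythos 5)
Your proposal is correct and follows essentially the same route as the paper: you build the same global form on $\tilde{X}$ (the paper obtains it as $\mathfrak{g}^\circ(\tau\tilde{q})$, the Green form of the rescaled tuple), use the same homogeneity identities $\eta_k(ts)=|t|^{2k}\eta_k(s)$ to cancel the powers of $|t|$, and extract $c$ from the $-2\log|\tau|$ term in the expansion of $E_1$. The only cosmetic difference is in the final continuity check, where you invoke continuity of fiber integration while the paper reduces to explicit local coordinates and leaves the same estimates as a ``routine calculus exercise''; both arguments rest on the same fact that $Z(\tilde{q})$ meets every fiber $\tilde{X}_t$ properly.
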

 
\begin{proof}
The smoothness of $\tilde{q}$ allows us to apply the constructions of \S \ref{ss:GS basics} to obtain  Green forms  $\mathfrak{g}^\circ(\tilde{q})$ and $\mathfrak{g}^\circ( \tau \tilde{q})$  for the cycles $Z(\tilde{q}) \subset \tilde{X}$ and 
\[
Z(\tilde{q}) \smallsetminus N_{X_0/X}\subset \tilde{X}\smallsetminus N_{X_0/X},
\]
respectively.
Recalling that $\tau \tilde{q} = \pi^* q$, for $t\neq 0$ these are related by
\[
\mathfrak{g}^\circ(q) = j_t^* \pi^* \mathfrak{g}^\circ (q) 
= j_t^*  \mathfrak{g}^\circ(\tau \tilde{q})  .
\]

As in the proof Proposition \ref{prop:general green}, we may write
\begin{equation} \label{degen green deformation}
( -2\pi i )^{\ell-1}   \mathfrak{g}^\circ(   \tilde{q} )    = 
\eta_0(  \tilde{q})  \cdot  E_1( 2\pi h( \tilde{q}) )    +  e^{- 2\pi h(  \tilde{q} ) }   \sum_{j=1}^{\ell-1}    \frac{ \eta_j ( \tilde{q})  }{ h(  \tilde{q})^j }   
 \cdot P_j( h( \tilde{q}) ), 
\end{equation}
where  $P_j$ is a polynomial and $\eta_j(\tilde{q})$ is a smooth form on $\tilde{X}$ satisfying the homogeneity property \eqref{eta homo}.
 If we replace $\tilde{q}$ by $\tau\tilde{q}$ in \eqref{degen green deformation}, pull back by $j_t: X \to \tilde{X}$, and use
\[
j_t^*  \left( \frac{  \eta_j(\tau \tilde{q}) } { h(\tau \tilde{q})^j }  \right) = 
j_t^*  \left( \frac{  \eta_j( \tilde{q}) } { h(\tilde{q})^j  }  \right) ,
\]
 we find that 
 $
 \mathfrak{g}^\circ( q ) = j_t^*  \mathfrak{g}^\circ( \tau \tilde{q} ) = (-2\pi i )^{1-\ell} j_t^* \Psi
 $
  where
\[
\Psi=  \eta_0 ( \tilde{q} )  \cdot  E_1( 2\pi | \tau |^2 h(  \tilde{q}) ) 
 +  e^{- 2\pi |\tau|^2  h( \tilde{q} ) }    \sum_{j=1}^{\ell-1}   \frac{ \eta_j (  \tilde{q})  }{ h( \tilde{q})^j } 
 \cdot P_j ( h( \tau \tilde{q}) ).
\]
The equality \eqref{degen g pre-expansion} follows easily from this and  \eqref{Ei}.

Applying $j_{t*}$ to both sides of \eqref{degen g pre-expansion} yields
\[
j_{t*} [  \mathfrak{g}^\circ(q) ] =
 \left( \frac{ a  }{ h(\tilde{q})^{\ell-1} }    +   b  \log  h( \tilde{q})    +  c \cdot  \log | t |    \right) \wedge \delta_{\tilde{X}_t}.
\]
To show that this is a logarithmic expansion, one must  verify the continuity and H\"older continuity  at $t=0$
of  $G_0(t)(\varphi)$ and $G_1(t)(\varphi)$, respectively, for any smooth compactly supported form $\varphi$ on $\tilde{X}$.  Using a partition of unity argument, we may reduce to the case in which the support of $\varphi$ is contained in a coordinate neighborhood
\begin{align*}
\tilde{U}  & = \{ (z_1,\ldots, z_n, w_1,\ldots, w_m,t) \in \Delta^n \times  \C^m \times \C 
 :  z_i = t w_i,  \, \forall 1\le i \le m   \}  \\
 & \subset \{ (z_{m+1},\ldots, z_n, w_1,\ldots, w_m,t) \in  \C^{n-m} \times  \C^m \times \C    \} 
\end{align*}
chosen as in the proof of Proposition \ref{prop:intersection cycle spec}.
In particular,  $\tilde{q}_i = w_i$ for all $1\le i \le \ell$, and the function $h(\tilde{q})$  has the form
\[
H(z,w,t) = f_1(z,w,t) \cdot | w_1|^2 + \cdots + f_\ell(z,w,t) \cdot |w_\ell |^2
\]  
for smooth compactly supported  $f_1,\ldots, f_\ell : \C^{n-m} \times  \C^m \times \C  \to \R^{>0}$.

The continuity of $G_0(t)(\varphi)$ now amounts to  the continuity in $t$ of 
\[
\int_{\C^{n-m} \times  \C^m} \frac{g(z,w,t)}{ H(z,w,t)^{\ell-1}    } \cdot  \mu
\quad\mbox{and}\quad
\int_{\C^{n-m} \times  \C^m} g(z,w,t) \cdot     \log H(z,w,t)    \cdot  \mu 
\]
 for any smooth compactly supported function $g(z,w,t)$ on $\C^{n-m} \times  \C^m \times \C$,  where
\[
\mu=
dz_{m+1}\wedge  d\bar{z}_{m+1} \wedge \cdots \wedge dz_n\wedge  d\bar{z}_n \wedge
 dw_1\wedge  d\overline{w}_1 \wedge \cdots \wedge dw_m\wedge  d\overline{w}_m .
\]
The smoothness (hence H\"older continuity)  of $G_1(t)(\varphi)$  amounts to the smoothness in $t$ of 
 \[
\int_{\C^{n-m} \times  \C^m} g(z,w,t)  \cdot  \mu .
\]
These are routine calculus exercises, left to the reader.
\end{proof}

For the Green currents  $\mathfrak{g}^\circ(q_i)$ associated to the individual components of $q=(q_1,\ldots, q_\ell)$, 
one has a more precise version of Lemma \ref{lem:nice expansions}.

\begin{lemma}\label{lem:very nice expansions}
 For $1 \le i \le \ell$ there is a  smooth function $f_i$ on $\tilde{X}$ such that 
\[
 \mathfrak{g}^\circ(  q_i)    
=   j_t^* \left(       -\log( 2\pi e^\gamma h(  \tilde{q}_i))   +| \tau |^2    f_i  - 2 \log | \tau |  \right)
\]
 for all $t\in \C\smallsetminus \{0\}$.   If we define currents
 \[
G_0 =       \left(  -\log( 2\pi e^\gamma  h(  \tilde{q}_i)  )  +| \tau |^2    f_i  \right) \wedge \delta_{\tilde{X}_t} 
 \]
and $G_1 =  - 2  \delta_{\tilde{X}_t}$ on $N_{X_0/X}$, then 
 \[
 j_{t*} [    \mathfrak{g}^\circ(  q_i)    ]
=  G_0     + G_1   \cdot  \log | t |  
\]
is a logarithmic expansion of $ \mathfrak{g}^\circ(  q_i)$ along $X_0$.
 \end{lemma}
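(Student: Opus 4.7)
The plan is to reduce directly to the $d=1$ case of Proposition~\ref{prop:general green}, where the Green form is given explicitly by the exponential integral \eqref{simplest green}: $\mathfrak{g}^\circ(q_i) = E_1(2\pi h(q_i))$. Because $q_i$ is a degenerating section, Lemma~\ref{lem:degenerate section spec} gives $\tau \tilde q_i = \pi^* q_i$, hence the identity $\pi^* h(q_i) = |\tau|^2 h(\tilde q_i)$ on $\tilde X$. Applying $j_t$ to the right-hand side for $t\neq 0$ yields $j_t^* h(\tilde q_i) = h(q_i)/|t|^2$, so
\[
\mathfrak{g}^\circ(q_i) \;=\; j_t^* \bigl(E_1(2\pi|\tau|^2 h(\tilde q_i))\bigr).
\]

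Next I would use the classical power series
\[
E_1(x) = -\log x - \gamma - \sum_{k\ge 1} \frac{(-x)^k}{k\cdot k!} \;=\; -\log x - \gamma + x\cdot F(x),
\]
where $F(x) = -\sum_{j\ge 0} (-x)^j/((j+1)(j+1)!)$ is an entire function. Substituting $x = 2\pi|\tau|^2 h(\tilde q_i)$ and separating the logarithm via $\log(|\tau|^2 h(\tilde q_i)) = \log h(\tilde q_i) + 2\log|\tau|$, one obtains the decomposition
\[
E_1(2\pi|\tau|^2 h(\tilde q_i)) = -\log(2\pi e^\gamma h(\tilde q_i)) + |\tau|^2 f_i - 2\log|\tau|,
\]
with $f_i \define 2\pi h(\tilde q_i)\cdot F\bigl(2\pi |\tau|^2 h(\tilde q_i)\bigr)$. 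The point is that, although $\log h(\tilde q_i)$ is singular along $Z(\tilde q_i)$, the function $f_i$ is manifestly smooth on all of $\tilde X$: the entire function $F$ is composed with a smooth argument, and $h(\tilde q_i)$ itself is smooth (only $\log h(\tilde q_i)$ is singular). Applying $j_t^*$ to both sides and using $j_t^* \tau = t$ recovers the stated formula for $\mathfrak{g}^\circ(q_i)$.

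To finish, I would verify that $j_{t*}[\mathfrak{g}^\circ(q_i)] = G_0(t) + G_1(t)\log|t|$ is a logarithmic expansion in the sense of Definition~\ref{def:log expansion}, i.e.\ that $t\mapsto G_0(t)(\phi)$ is continuous at $t=0$ and $t\mapsto G_1(t)(\phi)$ is H\"older continuous at $t=0$ for every $\phi \in {}_cE^{2n-1}_{\tilde X}$. The current $G_1(t) = -2\,\delta_{\tilde X_t}$ is actually smooth in $t$ by Remark~\ref{rem:fiber integrals}. For $G_0(t)$, the contribution of $|\tau|^2 f_i$ is smooth in $t$ (again by Remark~\ref{rem:fiber integrals}), so only the term $-\log(2\pi e^\gamma h(\tilde q_i))\wedge \delta_{\tilde X_t}$ requires care. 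This is handled exactly as at the end of the proof of Lemma~\ref{lem:nice expansions}: using a partition of unity, one reduces to the coordinate chart from the proof of Proposition~\ref{prop:intersection cycle spec}, in which $\tilde q_i = w_i$ and $h(\tilde q_i)$ is a strictly positive smooth multiple of $|w_i|^2$, and the resulting integrals of $\log|w_i|^2$ against a smooth compactly supported density depend continuously on $t$. The main ``obstacle'' is purely bookkeeping: carefully tracking the cancellation $\log(|\tau|^2 h(\tilde q_i)) = \log h(\tilde q_i) + 2\log|\tau|$ so that the $\log|\tau|$ coefficient emerges cleanly as $-2$, independent of $h(\tilde q_i)$; everything else reduces to the scalar identity for $E_1$ and the argument already used in the preceding lemma.
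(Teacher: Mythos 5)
Your proposal is correct and takes essentially the same route as the paper: the paper's proof likewise reduces to the $d=1$ identity $\mathfrak{g}^\circ(\tilde q_i)=E_1(2\pi h(\tilde q_i))$ of \eqref{simplest green}, replaces $\tilde q_i$ by $\tau\tilde q_i$, pulls back by $j_t$, and extracts the coefficient $-2$ of $\log|\tau|$ from the series \eqref{Ei}, with the continuity checks done in the local coordinates of Proposition \ref{prop:intersection cycle spec} exactly as in Lemma \ref{lem:nice expansions}. (One trivial slip: your explicit $F$ has the wrong sign, since $E_1(x)=-\log x-\gamma+x\sum_{j\ge0}\frac{(-x)^j}{(j+1)\,(j+1)!}$; this does not affect the argument, as only smoothness of $f_i$ is needed.)
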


 \begin{proof}
The proof is the same as that of Lemma \ref{lem:nice expansions}, except that one  replaces \eqref{degen green deformation}  with the simpler  equality
$
\mathfrak{g}^\circ(  \tilde{q}_i)   = E_1( 2\pi h(  \tilde{q}_i) )
$
of  \eqref{simplest green}.
\end{proof}

\begin{proposition}\label{prop:the specializations}
  We have the equality of currents
 \[
  \sigma_{X_0/X} (   \mathfrak{g}^\circ(p ) )  = [ \pi_0^* \mathfrak{g}^\circ( p|_{X_0} )]  \in D^{k-1,k-1}_{N_{X_0/X}} 
\]
 where $\pi_0: N_{X_0/X} \to X_0$ is the bundle map, and there are smooth forms $a_0$ and $b_0$ on $N_{X_0/X}$ such that 
  \[
\sigma_{X_0/X}(  \mathfrak{g}^\circ (q  ) ) 
=    \frac{  a_0  }{ h( \sigma_{X_0/X}(q) )^{\ell-1} }    +   b_0  \log  h( \sigma_{X_0/X}(q)  )   \in D^{\ell-1,\ell-1}_{N_{X_0/X}} .
\]
For the individual components of $q=(q_1,\ldots, q_\ell)$ we have the exact formula
\[
\sigma_{X_0/X}(  \mathfrak{g}^\circ(q_i) ) =    -  \log    h( \sigma_{X_0/X}( q_i )  )    -        \log   ( 2\pi e^\gamma)  
  \in D^{0,0}_{N_{X_0/X}} .
\]
 \end{proposition}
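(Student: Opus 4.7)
The three claims can all be read off from the logarithmic expansions already produced in Lemmas \ref{lem:nice expansions} and \ref{lem:very nice expansions}, together with a direct ``smooth pullback'' argument for the piece coming from $p$. By Definition \ref{def:current specialization}, the specialization $\sigma_{X_0/X}(G)$ is characterized by $j_{0*}\sigma_{X_0/X}(G) = G_0(0)$, so in each case the strategy is to exhibit a logarithmic expansion, evaluate $G_0$ at $t = 0$, and identify the restriction to $N_{X_0/X} = \tilde X_0$.

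For the statement about $\mathfrak{g}^\circ(p)$, the plan is to observe that the smoothness of $p|_{X_0}$ means the local coordinates from the proof of Proposition \ref{prop:intersection cycle spec} realize $\pi^*p_i = z_{m+i}$ as honest (unscaled) coordinate functions on $\tilde X$, hence $\pi^*p = (\pi^*p_1,\ldots,\pi^*p_k)$ is a smooth tuple of sections of $\pi^*L^\vee$ whose zero locus is the strict transform $\widetilde{Z(p)}$ and meets every fiber $\tilde X_t$ transversely. One therefore has a Green form $\mathfrak{g}^\circ(\pi^*p)$ on $\tilde X$ given by Proposition \ref{prop:general green}, and $j_t^*\mathfrak{g}^\circ(\pi^*p) = \mathfrak{g}^\circ(p)$ for $t\neq 0$. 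This yields the trivial logarithmic expansion $G_0(t) = [\mathfrak{g}^\circ(\pi^*p)]\wedge \delta_{\tilde X_t}$, $G_i(t) = 0$ for $i > 0$, for which $G_0$ is visibly continuous in $t$. Evaluating at $t = 0$ and restricting $\pi^*p$ to $N_{X_0/X}$ gives $\pi_0^*(p|_{X_0})$, so $\sigma_{X_0/X}(\mathfrak{g}^\circ(p)) = [\mathfrak{g}^\circ(\pi_0^*(p|_{X_0}))] = [\pi_0^*\mathfrak{g}^\circ(p|_{X_0})]$, where the last equality uses the naturality of the construction in Proposition \ref{prop:general green} under smooth pullback.

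For the statement about $\mathfrak{g}^\circ(q)$, I would apply Lemma \ref{lem:nice expansions} directly. The lemma produces forms $a,b,c \in E^{\ell-1,\ell-1}_{\tilde X}$ and a logarithmic expansion $j_{t*}[\mathfrak{g}^\circ(q)] = G_0(t) + G_1(t)\log|t|$ with $G_0(t) = \bigl(a/h(\tilde q)^{\ell-1} + b\log h(\tilde q)\bigr)\wedge\delta_{\tilde X_t}$. Then $G_0(0)$ is the pushforward under $j_{0*}$ of the pullback of $a/h(\tilde q)^{\ell-1} + b\log h(\tilde q)$ to $N_{X_0/X} = \tilde X_0$, which by Lemma \ref{lem:degenerate section spec} equals $a_0/h(\sigma_{X_0/X}(q))^{\ell-1} + b_0\log h(\sigma_{X_0/X}(q))$ with $a_0 = j_0^*a$ and $b_0 = j_0^*b$. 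This is exactly the claimed formula.

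For the exact formula on individual components $\mathfrak{g}^\circ(q_i)$, the same mechanism applied to the refined expansion of Lemma \ref{lem:very nice expansions} works: there $G_0 = \bigl(-\log(2\pi e^\gamma h(\tilde q_i)) + |\tau|^2 f_i\bigr)\wedge\delta_{\tilde X_t}$ and the term $|\tau|^2 f_i$ vanishes identically on $\tilde X_0$ since $\tau|_{\tilde X_0} \equiv 0$. Restricting via $j_0$ and using Lemma \ref{lem:degenerate section spec} again gives $\sigma_{X_0/X}(\mathfrak{g}^\circ(q_i)) = -\log h(\sigma_{X_0/X}(q_i)) - \log(2\pi e^\gamma)$. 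The main subtlety (and the only real work) is the verification, already internalized in Lemmas \ref{lem:nice expansions} and \ref{lem:very nice expansions}, that the candidate $G_0, G_1$ genuinely meet the continuity and H\"older continuity conditions of Definition \ref{def:log expansion}; once those lemmas are in hand, the current proposition is essentially a matter of setting $t = 0$ and reading off the restrictions.
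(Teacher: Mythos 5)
Your proposal is correct and follows essentially the same route as the paper: a trivial expansion via the smooth tuple $\pi^*p$ for the first claim, and evaluation at $t=0$ of the expansions from Lemmas \ref{lem:nice expansions} and \ref{lem:very nice expansions} (together with Lemma \ref{lem:degenerate section spec}) for the other two. The only gloss is calling $G_0(t)$ ``visibly continuous'' in the $p$ case --- since $\mathfrak{g}^\circ(\pi^*p)$ is singular along the strict transform of $Z(p)$, which meets $\tilde X_0$, this continuity still requires the local-coordinate estimate via \eqref{green singularities}, as you do acknowledge in your closing paragraph.
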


\begin{proof}
For $\mathfrak{g}^\circ(p)$, note that the tuple $p$ remains smooth (as one can check in the local coordinates of Proposition \ref{prop:intersection cycle spec}) after pullback via any arrow in
\[
\xymatrix{
{N_{X_0/ X} } \ar[r]^{ j_0 } \ar[d]_{\pi_0} & { \tilde{X} } \ar[d]^{\pi}  \\
{X_0} \ar[r]_{i_0} & { X. } 
}
\]
Each of these pullbacks has its own Green form $\mathfrak{g}^\circ(\cdot)$ associated to it, and these satisfy obvious functorial properties, e.g. $\pi^* \mathfrak{g}^\circ(p) =  \mathfrak{g}^\circ (\pi^* p)$.
For any $t\neq 0$ we have the (particularly simple)  logarithmic expansion
\[
 j_{t*} [ \mathfrak{g}^\circ(p) ] =  \mathfrak{g}^\circ(\pi^*p) \wedge \delta_{ \tilde{X}_t} ,
\]
 of  $\mathfrak{g}^\circ(p)$ along $X_0 \subset X$.  
 Of course one must check that the family of currents on the right hand side is defined  at $t=0$, and  satisfies the continuity condition of Definition \ref{def:log expansion}; using the analysis of singularities of $\mathfrak{g}^\circ(\pi^*p)$ from \eqref{green singularities},  this is an easy calculation in  local coordinates  as in  the proof of Lemma \ref{lem:nice expansions}.
The  constant term at $t=0$ of this expansion is
\[
\mathfrak{g}^\circ(\pi^*p) \wedge \delta_{ N_{X_0/X} } 
= j_{0*} [  \mathfrak{g}^\circ (\pi^* p)|_{N_{X_0/X}} ]
=j_{0*} [  \pi_0^* \mathfrak{g}^\circ( p|_{X_0} ) ] ,
\]
proving the  first claim.

The claims about $\mathfrak{g}^\circ(q)$ and $\mathfrak{g}^\circ(q_i)$ follow by taking $t=0$ in the logarithmic expansions of Lemmas \ref{lem:nice expansions} and \ref{lem:very nice expansions}, and recalling from Lemma \ref{lem:degenerate section spec} that the restriction  of $\tilde{q}$ to the fiber $N_{X_0/X} = \tilde{X}_0$ is $\sigma_{X_0/X}(q)$.
\end{proof}

\begin{remark}
Using  Proposition \ref{prop:general green} and \eqref{simplest green},
each section  $\sigma_{X_0/X} (q_i)$  of the hermitian line bundle $\pi_0^*L_0^\vee$ on  $N_{X_0/X}$ determines a Green function 
\[
\mathfrak{g}^\circ( \sigma_{X_0/X} (q_i)  ) = E_1(2\pi  h(\sigma_{X_0/X} (q_i) )  ) 
\]
for the divisor $\sigma_{X_0/X} (q_i) =0$ on $N_{X_0/X}$. 
By  the third claim of Proposition \ref{prop:intersection cycle spec}, this divisor is none other than the specialization of $Z(q_i) \subset X$ to the normal bundle, which also admits the Green function $\sigma_{X_0/X}   ( \mathfrak{g}^\circ( q_i)  )$ obtained by specializing $\mathfrak{g}^\circ( q_i)$.
 Proposition \ref{prop:the specializations} shows that
 \[
 \mathfrak{g}^\circ( \sigma_{X_0/X} (q_i)  )  \neq  \sigma_{X_0/X}   ( \mathfrak{g}^\circ( q_i)  ) .
 \]
 This should not cause confusion, as the Green function on the left hand side plays no role in our arguments, and will never appear again.  
 \end{remark}

  \begin{proposition}\label{prop:special star}
  The  specializations of    $\mathfrak{g}^\circ(s)$, $\mathfrak{g}^\circ(p)$, and $\mathfrak{g}^\circ(q)$ to $N_{X_0/X}$ are related by 
   \begin{align*}
\sigma_{X_0/X} ( \mathfrak{g}^\circ(s)  ) 
 & =   \sigma_{X_0/X} (   \mathfrak{g}^\circ(p ) )  \star   \sigma_{X_0/X}( \mathfrak{g}^\circ( q )  )   \\
& \quad  - \partial \sigma_{X_0/X}(  A(p ; q ) )  - \overline{\partial}  \sigma_{X_0/X} ( B(p ; q ) ) ,
\end{align*}
where  $A(p ; q)$ and $B(p ; q)$ are the currents of Theorem \ref{thm:star}.  
 Moreover,
\begin{align*}
\sigma_{X_0/X} ( \mathfrak{g}^\circ(q) )   & =  \sigma_{X_0/X}(  \mathfrak{g}^\circ(q_1) ) \star \cdots \star \sigma_{X_0/X}(  \mathfrak{g}^\circ(q_\ell) )  \\
& \quad -  \partial \sigma_{X_0/X} ( \mathfrak{a}(q) )  -   \overline{\partial} \sigma_{X_0/X} ( \mathfrak{b}(q) )  
\end{align*}
where  $\mathfrak{a}(q)$ and  $\mathfrak{b}(q)$  are the currents of \eqref{star iterate}.
In particular, all currents on $X$ appearing in these formulas admit logarithmic expansions along $X_0$, and the star products in both formulas are defined. 
\end{proposition}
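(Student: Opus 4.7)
The plan is to apply $j_{t*}$ for $t \neq 0$ to the star product identity of Theorem \ref{thm:star}, show that every term admits a logarithmic expansion along $X_0$, and extract the constant-in-$\log|t|$ coefficient at $t = 0$; the iterated formula will then follow by induction on $\ell$ from \eqref{star iterate}. For the right-hand side of
\[
\mathfrak{g}^\circ(s) = \mathfrak{g}^\circ(p) \star \mathfrak{g}^\circ(q) - \partial[A(p;q)] - \overline{\partial}[B(p;q)],
\]
the currents $A(p;q)$ and $B(p;q)$ are locally integrable elements of $E_X^\bullet(\log Z(p) \cup Z(q))$ by Theorem \ref{thm:star}, so Theorem \ref{thm:hu} supplies logarithmic expansions, and Remark \ref{rem:d specialization} transports these to $\partial[A(p;q)]$ and $\overline{\partial}[B(p;q)]$ with specialization commuting with $\partial$ and $\overline{\partial}$. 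The same combination provides an expansion for $\mathfrak{g}^\circ(s)$, and by subtraction $\mathfrak{g}^\circ(p) \star \mathfrak{g}^\circ(q)$ inherits one. On the normal bundle side the star products are defined: $\sigma_{X_0/X}(Z(p)) = \pi_0^* Z(p|_{X_0})$ meets $\sigma_{X_0/X}(Z(q))$ properly by Proposition \ref{prop:intersection cycle spec}, and the specializations of the Green currents are represented by locally integrable forms by Proposition \ref{prop:the specializations}.

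The substance of the proof is the identity
\[
\sigma_{X_0/X}(\mathfrak{g}^\circ(p) \star \mathfrak{g}^\circ(q)) = \sigma_{X_0/X}(\mathfrak{g}^\circ(p)) \star \sigma_{X_0/X}(\mathfrak{g}^\circ(q)).
\]
I would use the decomposition $\mathfrak{g}^\circ(p) \star \mathfrak{g}^\circ(q) = \delta_{Z(p)} \wedge \mathfrak{g}^\circ(q) + \mathfrak{g}^\circ(p) \wedge \omega^\circ(q)$ and treat the two summands separately. For the smooth summand, the trivial expansion $j_{t*}[\mathfrak{g}^\circ(p)] = \mathfrak{g}^\circ(\pi^*p) \wedge \delta_{\tilde{X}_t}$ established in the proof of Proposition \ref{prop:the specializations} persists after wedging with $\pi^*\omega^\circ(q)$, and its $t=0$ term is $\pi_0^*(\mathfrak{g}^\circ(p|_{X_0}) \wedge \omega^\circ(q)|_{X_0})$. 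For the singular summand, the transversality of $Z(p)$ and $X_0$ ensures that the pullback $\mathfrak{g}^\circ(q)|_{Z(p)}$ equals the Green form $\mathfrak{g}^\circ(q|_{Z(p)})$, as in the proof of Lemma \ref{lem:stars make sense}. Then Lemma \ref{lem:nice expansions}, applied inside the complex manifold $Z(p)$ with submanifold $Z(p) \cap X_0 = Z(p|_{X_0})$ and degenerating tuple $q|_{Z(p)}$, produces a logarithmic expansion for $\mathfrak{g}^\circ(q|_{Z(p)})$ along $Z(p|_{X_0})$; pushing forward along $Z(p) \hookrightarrow X$ gives a log expansion of $\delta_{Z(p)} \wedge \mathfrak{g}^\circ(q)$ along $X_0$ whose constant term at $t = 0$ equals $\delta_{\sigma_{X_0/X}(Z(p))} \wedge \sigma_{X_0/X}(\mathfrak{g}^\circ(q))$, by Proposition \ref{prop:intersection cycle spec}. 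Summing the two contributions gives the asserted identity.

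The second formula follows by feeding \eqref{star iterate} through the same machine: the corrections $\mathfrak{a}(q)$ and $\mathfrak{b}(q)$ are handled exactly as $A(p;q)$ and $B(p;q)$, and the iterated star product specializes factor by factor by the above compatibility applied inductively with $p = (q_1, \ldots, q_{r-1})$ and $q = q_r$ at each step. The main obstacle is the compatibility of star products with specialization, which, as the introduction stresses, is not formal. What makes it work here is the clean shape of Lemma \ref{lem:nice expansions}: the coefficient of $\log|t|$ is a smooth form, and the singular $\log h(\tilde{q})$ and $h(\tilde{q})^{-(\ell-1)}$ pieces carry no $\tau$-dependence. This structure ensures that restricting to the transverse slice $Z(p)$ and passing to $t = 0$ commutes with wedging against $\delta_{Z(p)}$, a compatibility which would generally fail for an arbitrary expansion produced by Hu's Theorem \ref{thm:hu}.
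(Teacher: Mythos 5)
Your treatment of the first formula is essentially the paper's argument: the same decomposition $G\star\mathfrak{g}^\circ(q)=\delta_{Z(p)}\wedge\mathfrak{g}^\circ(q)+G\wedge\omega^\circ(q)$, the same use of Lemma \ref{lem:nice expansions} to control the singular summand, and the same bookkeeping for $A(p;q)$, $B(p;q)$ via Theorem \ref{thm:hu} and Remark \ref{rem:d specialization}. Your variant of working inside the transverse slice $Z(p)$ and pushing forward (rather than rewriting $\int_{Z(p)}\mathfrak{g}^\circ(q)\wedge j_t^*\varphi$ directly as an integral over the strict-transform fiber $\tilde{Z}_t$, as the paper does) is workable, though it quietly requires identifying the deformation to the normal bundle of $Z(p|_{X_0})\subset Z(p)$ with the strict transform of $Z(p)$ in $\tilde X$ and matching the resulting specialization with the restriction of $\sigma_{X_0/X}(\mathfrak{g}^\circ(q))$; these identifications follow from Proposition \ref{prop:intersection cycle spec} but should be stated.

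The genuine gap is in the second formula. You propose to iterate ``the above compatibility applied inductively with $p=(q_1,\ldots,q_{r-1})$ and $q=q_r$,'' but the compatibility you established rests explicitly on the transversality of $Z(p)$ with $X_0$ --- that is what lets you restrict $\mathfrak{g}^\circ(q)$ to $Z(p)$ and apply Lemma \ref{lem:nice expansions} on the slice. In the iterated formula the first factor at each step is a Green current for $Z(q_1,\ldots,q_{r-1})$, which \emph{contains} $X_0$ rather than meeting it transversely, so the hypothesis fails and there is no transverse slice to restrict to. Moreover that first factor is an iterated star product, not a form $\mathfrak{g}^\circ(p)$, so the ``trivial expansion'' used for the smooth summand is unavailable; one only knows (inductively) that it admits \emph{some} logarithmic expansion. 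What is needed is a separate statement --- the paper's Lemma \ref{lem:degen iterate} --- asserting that for an arbitrary Green current $G$ for $Z(q_1,\ldots,q_r)$ admitting a logarithmic expansion, $G\star\mathfrak{g}^\circ(q_{r+1})$ admits one and specializes to $\sigma_{X_0/X}(G)\star\sigma_{X_0/X}(\mathfrak{g}^\circ(q_{r+1}))$. Its proof reruns the computation of the first compatibility on $\tilde X$ itself, using the explicit expansion of Lemma \ref{lem:very nice expansions} for $\mathfrak{g}^\circ(q_{r+1})$ and the strict transform of $Z(q_1,\ldots,q_r)$ in place of the transversality argument. Without this your induction does not close.
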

 
 \begin{proof}
 The core of the proof is the following lemma.

 \begin{lemma}\label{lem:star spec alt}
Suppose $G\in D_X^{k-1,k-1}$ is any Green current for $Z(p)$.
If $G$ admits a logarithmic expansion along $X_0 \subset X$, then so does   $G \star \mathfrak{g}^\circ(q)$, and its specialization to the normal bundle satisfies
\[
 \sigma_{X_0/X} (   G \star  \mathfrak{g}^\circ( q )  ) 
 = \sigma_{X_0/X} (  G)  \star   \sigma_{X_0/X}( \mathfrak{g}^\circ( q )  ) .
 \]
 In particular, the star product on the right is  defined.
\end{lemma}

\begin{proof}
Abbreviate $Z=Z(p)$, and recall from Lemma \ref{lem:stars make sense} that the star product 
\[
 G \star    \mathfrak{g}^\circ ( q ) 
 = \delta_Z \wedge   \mathfrak{g}^\circ ( q)   + G \wedge \omega^\circ( q ) 
\]
is defined.   Applying $j_{t^*}$ to both sides   results in
\[
j_{t*} [ G \star    \mathfrak{g}^\circ ( q )  ]  (\varphi)
=  \int_Z  \mathfrak{g}^\circ ( q ) \wedge j_t^* \varphi
+ ( j_{t*}G) ( \pi^* \omega^\circ( q ) \wedge  \varphi) ,
\]
 for any smooth compactly supported form $\varphi$ on $\tilde{X}$, and any $t\neq 0$.

 Using $j_t : Z \iso \tilde{Z}_t$ and the equality 
 \[
 \mathfrak{g}^\circ(q) =
   j_t^* \left( \frac{ a  }{ h(\tilde{q})^{\ell-1} }    +   b  \log  h( \tilde{q})    +  c \cdot  \log | \tau |    \right)
\]
of Lemma \ref{lem:nice expansions}, the integral on the right becomes
\[
 \int_Z  \mathfrak{g}^\circ ( q ) \wedge j_t^* \varphi
 =
 \int_{\tilde{Z}_t }    \left( \frac{ a  }{ h(\tilde{q})^{\ell-1} }    +   b  \log  h( \tilde{q})    +  c \cdot  \log | \tau |    \right) \wedge  \varphi .
\]
 Fixing a logarithmic expansion $j_{t*} G    =    \sum_{i \ge 0}   G_i  (t)     ( \log |t| )^i$, we obtain
 \[
j_{t*} [  G \star    \mathfrak{g}^\circ ( q )  ]
=  \sum_{i\ge 0} C_i(t)  \cdot ( \log|t|)^i 
\]
in which 
\begin{align*}
C_0(t) & =     \delta_{\widetilde{Z}_t}     \wedge  \left( \frac{ a  }{ h(\tilde{q})^{\ell-1} }    +   b  \log  h( \tilde{q})    \right) 
   + G_0(t) \wedge  \pi^* \omega^\circ( q )   \\
C_1(t) & =  c \wedge  \delta_{\widetilde{Z}_t}    + G_1  (t)  \wedge  \pi^* \omega^\circ(q)  \\
C_i(t)  & =  G_i  (t)  \wedge  \pi^* \omega^\circ(q) \quad \mbox{for } i > 1.
\end{align*}
To see that this is a logarithmic expansion of $G \star    \mathfrak{g}^\circ ( q )$   one must check that the terms involving $\delta_{\tilde{Z}_t}$ are well-defined currents (including at $t=0$) that satisfy the continuity conditions of Definition \ref{def:log expansion};
this is easily  verified in the local coordinates of the proof of Proposition \ref{prop:intersection cycle spec}.

The current $C_0(0)$ is the pushforward via $j_0 : N_{X_0/X} \to \tilde{X}$ of 
\[
  \delta_{ \sigma_{X_0/X}(Z) }  \wedge   \sigma_{X_0/X}( \mathfrak{g}^\circ(q) )        +  \sigma_{X_0/X}(G) \wedge  \pi_0^* i_0^* \omega^\circ( q ) 
\]
which agrees with $\sigma_{X_0/X} (  G)  \star   \sigma_{X_0/X}( \mathfrak{g}^\circ( q )  )$ by Proposition \ref{prop:green specialization}.
\end{proof}

Recall the equality
\[
\mathfrak{g}^\circ(s) 
=   \mathfrak{g}^\circ(p)  \star  \mathfrak{g}^\circ(q) 
 - \partial [ A(p ; q) ] - \overline{\partial} [ B(p ; q) ]
 \]
 of Theorem \ref{thm:star}.  The currents $A(p,q)$ and $B(p,q)$ admit logarithmic expansions along $X_0$ by Theorem \ref{thm:hu} and the final claim of Theorem \ref{thm:star}. 
  The star product admits a logarithmic expansion by Lemma \ref{lem:star spec alt}.
The Green current on the left admits a logarithmic expansion by Theorem \ref{thm:hu} and \eqref{green is log}, and also because the right hand side does.
Specializing both sides to $N_{X_0/X}$ and using  Remark  \ref{rem:d specialization} and Lemma \ref{lem:star spec alt} proves the first claim of Proposition \ref{prop:special star}.

 For the second claim we use the following lemma.
 
 \begin{lemma}\label{lem:degen iterate}
Fix $1\le r <\ell$, and let  $G\in D_X^{r-1,r-1}$ be any Green current for $Z( q_1,\ldots, q_r )$.  
If $G$ admits a logarithmic expansion along $X_0 \subset X$, then so does $G\star \mathfrak{g}^\circ(q_{r+1})$, and 
\[
\sigma_{X_0/X}( G\star \mathfrak{g}^\circ( q_{r+1} )) = 
\sigma_{X_0/X}( G ) \star  \sigma_{X_0/X}(  \mathfrak{g}^\circ(q_{r+1}) ) .
\]
In particular, the star product on the right is defined.
\end{lemma}

\begin{proof} 
The proof is virtually identical to that of Lemma \ref{lem:star spec alt}, using Lemma \ref{lem:very nice expansions} in place of Lemma \ref{lem:nice expansions}.
\end{proof}

To complete the proof of the second claim of Proposition \ref{prop:special star}, we begin with the equality
 \[
 \mathfrak{g}^\circ(q)      = 
 \mathfrak{g}^\circ(q_1 )  \star   \cdots \star  \mathfrak{g}^\circ (q_\ell)  
  - \partial  [   \mathfrak{a}(q) ]  - \overline{\partial} [  \mathfrak{b}(q) ] 
\]
of \eqref{star iterate}. 
Applying Lemma \ref{lem:degen iterate} inductively allows us to specialize both sides to $N_{X_0/X}$, and also shows that
\begin{align*}
\sigma_{X_0/X} ( \mathfrak{g}^\circ(q_1) \star \cdots \star \mathfrak{g}^\circ(q_r) )  
 = \sigma_{X_0/X} ( \mathfrak{g}^\circ(q_1) ) \star \cdots  \star \sigma_{X_0/X} (  \mathfrak{g}^\circ(q_r) ) . 
\end{align*}
Recalling  Remark  \ref{rem:d specialization}, we obtain the desired formula.
 \end{proof}

%%%%%%%%%%%%%%%%%%%%%%%%%%%%%%%%

\section{Orthogonal Shimura varieties}
\label{s:ortho}

%%%%%%%%%%%%%%%%%%%%%%%%%%%%%%%%

We now apply the general theory of the previous subsections to the special case in which $X$ is either the hermitian symmetric domain $\DD$ associated to an orthogonal group over a totally real field, or the complex  Shimura variety $M(\C)$ determined by such a group.  This allows us to prove our main result: a description of the behavior of special arithmetic cycles on the canonical model $M$ under pullback via the inclusion $M_0 \to M$ of a smaller orthogonal Shimura variety.

%%%%%%%%%%%%%%%%%%%%%%%%%%%%

\subsection{The hermitian symmetric domain}
\label{ss:ortho hermitian}

%%%%%%%%%%%%%%%%%%%%%%%%%%%%

Let $(V,Q)$  be a quadratic space of dimension $n+2\ge 3$ over a totally real number field $F$.
Assume there is one embedding $\sigma : F \to \R$  for which the real quadratic space
\[
V_\sigma=V\otimes_{ F,\sigma } \R
\]
  has signature  $(n,2)$, while $V_\tau=V\otimes_{F,\tau}\R$ is positive definite for all embeddings $\tau\neq \sigma$.   Denote by 
 \begin{equation}\label{bilinear}
 [x,y] = Q(x+y) - Q(x) -Q(y)
 \end{equation}
the associated  $F$-bilinear form on $V$. 
Extend it $\R$-bilinearly to $V_\sigma$, and $\C$-bilinearly to $V_\sigma \otimes_\R\C$.

The data $(V,Q)$ determines a hermitian symmetric domain
\[
\DD =  \{ z\in V_\sigma \otimes_\R \C : [z,z] =0,\, [z,\bar{z}] <0 \} / \C^\times \subset \mathbb{P}( V_\sigma \otimes_\R \C  ) .
\]
Denote by 
\[
V_\DD = V_\sigma \otimes_\R \co_\DD
\]
 the constant vector bundle on $\DD$  whose fiber at every point is $V_\sigma$.   
It comes equipped with a  symmetric bilinear pairing 
\begin{equation}\label{V_D pairing}
 [ \cdot , \cdot ]   : V_\DD  \times V_\DD  \to \co_\DD,
\end{equation}
which on fibers is just the $\C$-bilinear pairing  induced by \eqref{bilinear}.

The  vector bundle $V_\DD$ is equipped with a   filtration by $\co_\DD$-module local direct summands
\begin{equation}\label{D filtered}
L_\DD \subset L_\DD^\perp \subset V_\DD,
\end{equation}
whose fibers at any point $z\in \DD$  are identified with the subspaces
\begin{equation*}
\C z \subset (\C z)^\perp \subset V_\sigma \otimes_\R\C.
\end{equation*}
In particular $L_\DD$  is  isotropic under the pairing \eqref{V_D pairing}, which induces an isomorphism
\begin{equation}\label{dR duality}
V_\DD  / L_\DD ^\perp   \iso L_\DD^\vee.
 \end{equation}

At each  $z\in \DD$ we endow the  isotropic line 
\[
L_{\DD,z} =\C z \subset V_\sigma \otimes_\R \C
\]  
 with the positive definite hermitian form $h$ determined by
  \begin{equation}\label{natural metric}
h(z,z) = - \frac{ [ z,\bar{z} ] }{2} .
\end{equation}
This makes $L_\DD$ into a hermitian line bundle.

Using \eqref{dR duality},   any $x \in V_\sigma$ determines first a global section of $V_\DD$, and then a global section 
\begin{equation}\label{domain section}
s(x)  \in H^0(\DD, L_\DD^\vee)
\end{equation}
with zero locus the smooth analytic divisor
\[
Z_\DD(x) =  \{ z \in\DD : [ z ,  x ]=0  \}.
\]

More generally,  any  tuple $x=(x_1,\ldots, x_d) \in V_\sigma^d$ determines a tuple 
$
s(x) =(s(x_1) ,\ldots, s(x_d) )
$
 of sections, and we denote by  
 \[
 Z_\DD(x)  \subset \DD
 \]
  the analytic subspace defined by the vanishing of all components.    
  In other words, $Z_\DD(x)$ is those lines  $\C z \subset \DD$ such that $[z, x_i]=0$ for all $1\le i \le d$.  
This is a complex submanifold, which  depends only on  $\mathrm{Span}_\R\{ x_1 , \ldots, x_d\}  \subset V_\sigma$. 
 It  is nonempty precisely when  this subspace is positive definite, in which case it has codimension  $\dim_\R \mathrm{Span}_\R\{ x_1 , \ldots, x_d\}$.
Recalling the notation and terminology of \S \ref{ss:GS basics},  the smoothness and regularity of the tuple $s(x)$ are both equivalent to the linear independence of the vectors $x_1,\ldots, x_d$.

Now fix a  positive definite $v \in \Sym_d(\R)$ and  an   $\alpha \in \GL_d(\R)$ with positive determinant such that  
\[
v =\alpha \cdot{}^t \alpha.
\]
If $x\in V_\sigma^d$ is a tuple with linearly independent components,  we may form a new $d$-tuple $x\alpha \in V_\sigma^d$, and hence a corresponding smooth tuple $s(x \alpha)$ of sections of $L_\DD^\vee$.  
Applying the constructions of \S \ref{ss:GS basics} to this tuple of sections determines  forms
\begin{align}\label{alpha}
\mathfrak{g}_\DD^\circ( x,v ) & =  \mathfrak{g}^\circ( s(x  \alpha)  )  \in E^{d-1,d-1}_{ \DD\smallsetminus Z(x) }  \\
  \omega_\DD^\circ( x, v )  & =\omega^\circ( s(x  \alpha) ) \in E^{d,d}_\DD  \nonumber
\end{align}
related by   the Green equation
 \[
 dd^c[  \mathfrak{g}_\DD^\circ(x,v) ]  + \delta_{Z_\DD(x)} = [ \omega_\DD^\circ (x,v)] .
 \]
These forms  are independent of $\alpha$ by  Proposition 2.6(d) of \cite{GS}.

%%%%%%%%%%%%%%%%%%%%%%%%%%%

\subsection{Canonical models}
\label{ss:canonical models}

%%%%%%%%%%%%%%%%%%%%%%%%%%%

The quadratic space $(V,Q)$ determines a short exact sequence
\[
1 \to \mathbb{G}_{m /F} \to \mathrm{GSpin}(V) \to \mathrm{SO}(V) \to 1
\]
of reductive groups over $F$.  From now on we denote by $G$ either
\[
\mathrm{Res}_{F/\Q} \, \mathrm{GSpin}(V) \quad \mbox{or} \quad \mathrm{Res}_{F/\Q}\,  \mathrm{SO}(V).
\]
For our purposes these two groups are interchangable.
The group $G(\R)$ acts on $\DD$ via the  projection 
\[
G(\R) \to \prod_{\tau : F\to \R} \mathrm{SO}(V_\tau) \to  \mathrm{SO}(V_\sigma),
\]
 and the pair $(G,\DD)$ is a  Shimura datum.
A choice of sufficiently small  compact open subgroup $K \subset G(\A_f)$  determines a smooth quasi-projective variety $M$ over 
the reflex field $F\iso \sigma(F) \subset \C$ with $\C$-points 
\[
M(\C) = G(\Q) \backslash \DD \times G(\A_f) / K .
\]
It is projective if and only if $V$ is anisotropic.
Any $g\in G(\A_f)$ determines an open and closed submanifold
\begin{equation}\label{shimura component}
( gKg^{-1} \cap G(\Q)  ) \backslash  \DD \subset  M(\C),
\end{equation}
where  the inclusion is $z\mapsto (z,g)$.

For a point $z\in \DD$, the action of any $\gamma \in G(\R)$ on $V_\sigma \otimes_\R\C$ identifies the fibers of \eqref{D filtered} at $z$ and $\gamma z$.
This allows us to descend the filtered vector bundle \eqref{D filtered} from $\DD$ to every quotient \eqref{shimura component}.    By the theory of canonical models of automorphic vector bundles, there is a canonical filtered vector bundle
\[
L_M \subset L_M^\perp \subset V_M 
\]
on $M$ whose restriction to  \eqref{shimura component} agrees with this descent.

In a similar way, the  pairing \eqref{V_D pairing} descends  to an $\co_M$-bilinear pairing
\begin{equation}\label{canonical bilinear}
[\cdot,\cdot] : V_M \times V_M \to \co_M,
\end{equation}
under which $L_M$ and $L_M^\perp$ are the orthogonal sub-bundles to one another, 
and this pairing induces   a canonical isomorphism
\begin{equation}\label{canonical dR duality}
V_M/L_M^\perp \iso L_M^\vee 
\end{equation}
that agrees with \eqref{dR duality} under the complex uniformization.

The vector bundle $V_M$ is nonconstant, but it is \emph{infinitesimally constant} in the sense that it carries a flat connection
\begin{equation}\label{V_M connection}
\nabla : V_M \to V_M \otimes_{\co_M} \Omega^1_M
\end{equation}
characterized by the property that its pullback to $\DD$ via the uniformization \eqref{shimura component} agrees with the constant connection $\mathrm{id} \otimes d$ on $V_\DD = V_\sigma \otimes_\R \co_\DD$.  This allows us to perform parallel transport through square-zero thickenings:

\begin{proposition}\label{prop:parallel transport}
Suppose $M_0 \subset M$ is a closed subscheme, smooth over $F$.   A flat section of $V_M|_{M_0}$ extends uniquely to a flat section  of $V_M$ over the first order infinitesimal neighborhood  of $M_0$ in $M$.
\end{proposition}

\begin{proof}
This can be extracted from the  arguments of  \S 2 of \cite{BO78}.
In fact, as we are working with smooth schemes in characteristic $0$, the results of \emph{loc.~cit.} can be used to show that a flat section defined over $M_0$  extends uniquely to a flat section over the entire formal completion along $M_0 \subset M$.
We  instead sketch a more direct argument  working  only over the first order infinitesimal neighborhood  $M_0^\square \subset  M$.  
Thus  $M_0^\square$ is the closed subscheme defined    by the square $I^2 \subset \co_M$ of the ideal sheaf $I\subset \co_M$ defining $M_0 \subset M$.

Denote by 
$
U  \subset M\times_F M
$
the first order  infinitesimal neighborhood of the diagonal $M \subset  M \times_F M$, and let 
$p_1,p_2 :  U \to M$ be the projection maps.   
 By Proposition 2.9 of \emph{loc.~cit.}, the connection $\nabla$ determines an isomorphism
\[
p_1^* V_M \iso p_2^* V_M
\]
of vector bundles on $U$, satisfying a cocycle relation encoding the flatness of the connection.
This cocycle condition implies that the above isomorphism is an isomorphism of vector bundles \emph{with connections}, where the left and right hand sides are endowed with the pullbacks of $\nabla$ through $p_1$ and $p_2$, respectively.

The smoothness of $M_0$ implies  that, Zariski  locally on $M_0^\square$,  one can find a retraction $\rho: M_0^\square \to M_0$.  Denoting by $i$ and $i^\square$ the inclusions of $M_0$ and $M_0^\square$  into $M$, the product morphism
\[
(  i^\square  ,  i \circ \rho)   : M_0^\square \to M \times_F M
\]
factors through $U$, and hence the pullbacks of $V_M$ by $i^\square$ and $i \circ \rho$ are isomorphic as vector bundles with connections.
The resulting isomorphism 
\[
V_M|_{M_0^\square} \iso \rho^*( V_M|_{M_0} ),
\]
 induces a homomorphism
\[
 H^0(  M_0 , V_M|_{M_0} )^{\nabla =0}  \map{\rho^*}
    H^0(  M_0^\square ,  \rho^*( V_M|_{M_0})  )^{\nabla =0} 
 \iso 
   H^0(  M_0^\square , V_M|_{M_0^\square} )^{\nabla =0} 
\]
of spaces of flat sections, and it is not difficult to check that the first arrow is an isomorphism.
Note that the composition does not depend on the choice of retraction $\rho$, because this is true of its inverse  ``restrict to $M_0$".

This proves  the  existence and uniqueness of flat extensions of flat sections over  open subsets small enough that the required retractions exist, and the uniqueness allows us to glue the sections together over an open cover.
 \end{proof}

%%%%%%%%%%%%%%%%%%%%%%%%%%%

\subsection{Arithmetic cycle classes}
\label{ss:canonical cycle construction}

%%%%%%%%%%%%%%%%%%%%%%%%%%%

Fix an integer $d$  with $1\le d\le n+1$.
The group $G(\A_f)$ acts on 
\[
\widehat{V} =V \otimes_\Q \A_f,
\]
  and we fix a  $K$-invariant 
$\Z$-valued Schwartz function $\varphi \in  S( \widehat{V}^d )$.

Any $g\in G(\A_f)$ and $T \in \Sym_d(F)$ determine an analytic cycle 
\begin{equation}\label{component cycle}
Z_\DD(T,\varphi)_g =  \sum_{  \substack{ x \in  V^d  \\  T( x ) = T } }  \varphi( g^{-1} x) \cdot Z_\DD( x ),
\end{equation}
on $\DD$.
Here we denote by 
\begin{equation}\label{moment matrix}
T(x) =  \left(  \frac{ [ x_i , x_j ] }{2}   \right)  \in \Sym_d(F)
\end{equation}
the moment matrix of a tuple $x\in V^d$.
The cycle \eqref{component cycle}  descends to the quotient \eqref{shimura component}, and  varying $g$ yields an analytic cycle $Z_M(T,\varphi)(\C)$ on $M(\C)$. 
Being expressible as a union of smaller Shimura varieties constructed in the same way as $M$, this cycle  is the complexification of an algebraic cycle $Z_M(T,\varphi)$   of codimension $\mathrm{rank}(T)$ on the canonical model $M$.

Fix a positive definite $v \in \Sym_d(\R)$ and assume  $\det(T)\neq 0$.
As in \S 4.3 of \cite{GS}, the sums
   \begin{align} \label{component green}
\mathfrak{g}_\DD^\circ(T,v ,\varphi)_g
&  =  
 \sum_{  \substack{ x \in  V^d  \\  T(x ) = T } }  \varphi (g^{-1} x)  \cdot 
  \mathfrak{g}_\DD^\circ ( x ,v )  \in E^{d-1,d-1}_{ \DD \smallsetminus Z_\DD(T,\varphi)(\C)_g }  \\
\omega_\DD^\circ(T,v,\varphi)_g
& = 
\sum_{  \substack{ x \in  V^d  \\  T(x ) = T }  }  
\varphi ( g^{-1} x) \cdot   \omega_\DD^\circ ( x ,v )   \in E^{d,d}_\DD \nonumber
\end{align}
also descend to the quotient \eqref{shimura component}.
Again by varying $g$, we obtain forms 
\[
\mathfrak{g}_M^\circ(T,v ,\varphi) \in E^{d-1,d-1}_{M(\C) \smallsetminus Z_M(T,\varphi)(\C) }  
\quad \mbox{and}\quad 
\omega_M^\circ(T,v,\varphi) \in E^{d,d}_{M(\C)}
\]
 related by the Green equation
\[
dd^c [ \mathfrak{g}_M^\circ(T,v,\varphi) ] + \delta_{Z_M(T,\varphi)} = [ \omega_M^\circ(T,v,\varphi) ] ,
\]
and an arithmetic cycle class
\begin{equation}\label{naive arith cycle}
\widehat{Z}_M(T,v,\varphi) =
( Z_M(T,\varphi) , \mathfrak{g}_M^\circ(T,v,\varphi) ) \in \widehat{\mathrm{CH}}^d (M) .
\end{equation}
We would like to extend the definition to  include singular $T$.

Recall that we have endowed the tautological line bundle $L_\DD$ on $\DD$ with the hermitian metric $h$ of \eqref{natural metric}, and have endowed $L_\DD^\vee$ with the dual metric.    
These induce metrics on the canonical models $L_M$ and $L_M^\vee$, and so determine arithmetic cycle classes 
\[
L_M, L_M^\vee \in   \widehat{\mathrm{CH}}^1 (M)
\]
using the arithmetic Chern class map from \S III.4.2  of \cite{SouleBook}.
Of course $L_M^\vee = -L_M$.
A distinguished role is played by 
\begin{equation}\label{shifted tautological}
\widehat{\omega}^{-1}= L_M^\vee   + ( 0, - \log(2\pi e^\gamma) ) \in  \widehat{\mathrm{CH}}^1 (M).
\end{equation}
In other words, if  we endow $L_M$ with the rescaled metric  $( 2\pi e^\gamma)^{-1} h$, then \eqref{shifted tautological} is the image of its dual under the arithmetic Chern class map.
Abbreviate
\[
\Omega = \chern(L_M^\vee) \in E^{1,1}_{M(\C)}
\]
for the Chern form of  the dual of $(L_M,h)$, and note that  $\Omega$ is also the Chern form of \eqref{shifted tautological}.

\begin{remark}
Our  $L_M$ agrees with the $\mathcal{E}$ in (5.160) of \cite{GS}, but our $\widehat{\omega}$ differs from theirs by an inverse and a rescaling of metrics.
\end{remark}

\begin{remark}
The  factor of $2\pi e^\gamma$  in \eqref{shifted tautological} is needed to make the arithmetic cycle classes defined below satisfy the pullback formula of Theorem \ref{BigThmA}.  
More precisely, in the proof of  Proposition \ref{prop:key degenerate specialization} this factor will match up with the similar factor   appearing in the logarithmic expansions of Lemma \ref{lem:very nice expansions} and the specializations to the normal bundle of Proposition \ref{prop:the specializations}.   
There are other  reasons why the  particular normalization in \eqref{shifted tautological} is a natural choice, as explained in the introduction to \cite{KRY04}. 
 \end{remark}

\begin{theorem}[Garcia-Sankaran]\label{thm:the classes}
Assume  that $V$ is anisotropic. 
There are arithmetic cycle classes
\[
\widehat{Z}_M(T,v,\varphi)    \in \widehat{\mathrm{CH}}^d(M)
\]
indexed by $T\in \Sym_d(F)$,  positive definite $v\in \Sym_d(\R)$, and $K$-fixed $\Z$-valued $\varphi \in S(\widehat{V})$ satisfying the following properties.
\begin{enumerate}
\item
For fixed $T$ and $v$, the formation of $\widehat{Z}_M(T,v,\varphi)$ is linear in $\varphi$.
\item 
If  $\det(T) \neq 0$  then $\widehat{Z}_M(T,v,\varphi)$ agrees with \eqref{naive arith cycle}.

\item
If  $0_d \in \Sym_d(F)$ denotes the  zero matrix, then 
\[
\widehat{Z}_M(0_d ,v,\varphi) =   \varphi(0)\cdot   
 \underbrace {\,  \widehat{\omega}^{-1} \cdots \widehat{\omega}^{-1} } _{d \mathrm{\ times}} .
\]
\item
Assume that  $T$ and $v$ have the form
\[
T =     \begin{pmatrix}  T_0  \\ & 0_{d-r}   \end{pmatrix}    \quad  \mbox{and} \quad
 v =   {}^t\theta \cdot  \begin{pmatrix}   v_0 \\ & w   \end{pmatrix}  \cdot  \theta
\]
with  $T_0 \in  \Sym_r(F)$  nonsingular,  $v_0\in \Sym_r(\R)$ and $w \in \Sym_{d-r}(\R)$ of positive determinant, and 
\[
\theta = \begin{pmatrix}  1_r & * \\ & 1_{d-r} \end{pmatrix}  \in \GL_d(\R).
\]
If $\varphi = \varphi^{(r)} \otimes  \varphi^{(d-r)} \in S(\widehat{V}^r)\otimes S(\widehat{V}^{d-r})$  is a product of $\Z$-valued $K$-fixed Schwartz functions, then
\[
\widehat{Z}_M (T,v,\varphi ) = \widehat{Z}_M(T_0 , v_0, \varphi^{(r)} )  \cdot   \widehat{Z}_M(0_{d-r} ,w, \varphi^{(d-r)} ) .
\]
\item
For any $a \in \mathrm{GL}_d(F)$ we have 
\begin{align*}
\widehat{Z}_M(T ,v , \varphi ) & = \widehat{Z}_M ( {}^aT    ,  {}^av  , {}^a\varphi )     
\end{align*}
where  
\[
{}^aT =  {}^ta   T  a , \quad {}^av = \sigma(a^{-1})  v \sigma( {}^t a^{-1}) , \quad {}^a\varphi(x) = \varphi(x a^{-1}) .
\]
\end{enumerate}
\end{theorem}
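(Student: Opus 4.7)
The plan is to construct $\widehat{Z}(T,v,\varphi)$ by case analysis on $r = \mathrm{rank}(T)$, taking (2) and (3) as outright definitions in the extremal cases $r = d$ and $r = 0$, and reducing every intermediate $T$ to these base cases via (4) together with (5).

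For the base cases: when $\det(T) \neq 0$ the class is already produced by \eqref{naive arith cycle}, its linearity in $\varphi$ being visible in the pointwise sums \eqref{component cycle} and \eqref{component green}, and its $\mathrm{GL}_d(F)$-equivariance being the pointwise $\mathrm{O}(V)(F)$-invariance of the forms $\mathfrak{g}^\circ(x,v)$ and $\omega^\circ(x,v)$ constructed in \S\ref{ss:ortho hermitian}. When $T = 0_d$, set
\[
\widehat{Z}(0_d, v, \varphi) = \varphi(0) \cdot \underbrace{\widehat{\omega}^{-1} \cdots \widehat{\omega}^{-1}}_{d \text{ times}},
\]
as dictated by (3); the iterated arithmetic intersection makes sense in $\widehat{\mathrm{CH}}^d(M)$ because $M$ is smooth and projective ($V$ being anisotropic), so Gillet--Soul\'e theory applies. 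Linearity is then obvious, and $\mathrm{GL}_d(F)$-equivariance holds trivially since $\varphi(0) = {}^a\varphi(0)$.

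For intermediate rank $0 < r < d$, the plan is: pick $a \in \mathrm{GL}_d(F)$ so that ${}^a T = \mathrm{diag}(T_0, 0_{d-r})$ with $T_0 \in \Sym_r(F)$ nonsingular (possible by choosing an $F$-basis adapted to the radical of $T$), perform the unique block-Gram--Schmidt factorization ${}^a v = {}^t\theta \, \mathrm{diag}(v_0, w) \, \theta$ with $\theta$ block-upper-triangular unipotent for the partition $(r, d-r)$, write ${}^a\varphi = \sum_j \varphi^{(r)}_j \otimes \varphi^{(d-r)}_j$ as a finite sum of tensor products, and define
\[
\widehat{Z}(T,v,\varphi) := \sum_j \widehat{Z}(T_0, v_0, \varphi^{(r)}_j) \cdot \widehat{Z}(0_{d-r}, w, \varphi^{(d-r)}_j),
\]
in which each factor lies in one of the two base cases.

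The main obstacle will be proving independence of the chosen $a$. The decomposition of $\varphi$ into tensors is absorbed by bilinearity of arithmetic intersection, and $(v_0, w, \theta)$ is uniquely determined by ${}^a v$, so these present no difficulty. If $a_1, a_2$ both diagonalize $T$, then $b = a_2 a_1^{-1}$ stabilizes $\mathrm{diag}(T_0, 0_{d-r})$ under $T \mapsto {}^t b T b$; a direct block computation shows $b$ must have the form $b = \bigl(\begin{smallmatrix} b_{11} & 0 \\ b_{21} & b_{22} \end{smallmatrix}\bigr)$ with $b_{11} \in \mathrm{O}(T_0)(F)$ and $b_{22} \in \mathrm{GL}_{d-r}(F)$. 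Equivariance of the nonsingular factor under $b_{11}$ is the $r$-dimensional instance of (5) already available in the base case; invariance of the zero factor under $b_{22}$ holds because only $\varphi^{(d-r)}_j(0)$ enters it; and the strictly lower-triangular part $b_{21}$ is absorbed into the adjustment of $\theta$ by uniqueness of the block-Gram--Schmidt decomposition. Once well-definedness is secured, properties (1), (2), (3) are immediate from the construction; (4) reduces via (5) to the tautological block-diagonal case; and (5) for arbitrary $a$ follows because composing the diagonalizing change of variables with $a$ yields an equally valid diagonalization, and the right side of the defining formula is unchanged by the well-definedness analysis.
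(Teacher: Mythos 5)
Your route is genuinely different from the paper's. The paper defines $\widehat{Z}(T,v,\varphi)$ uniformly for all $T$ by invoking the Garcia--Sankaran current $\mathfrak{g}^\circ(T,v,\varphi)$ for singular $T$ (Propositions 4.3 and 4.4 of \cite{GS}), which satisfies the generalized Green equation $dd^c\mathfrak{g}^\circ(T,v,\varphi)+\delta_{Z(T,\varphi)}\wedge\Omega^{d-\mathrm{rank}(T)}=\omega^\circ(T,v,\varphi)$, and then sets $\widehat{Z}(T,v,\varphi)=(Z(T,\varphi),\mathfrak{g})\cdot(\widehat{\omega}^{-1})^{d-r}+(0,\mathfrak{g}^\circ(T,v,\varphi)-\mathfrak{g}\wedge\Omega^{d-r})$; properties (3)--(5) are theorems, deduced from analytic identities in \cite{GS}. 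You instead take (2) and (3) as definitions and bootstrap through (4) and (5), trading that analytic input for a well-definedness argument. Since Remark \ref{rem:good a} shows the listed properties characterize the classes, your classes (once well defined) must coincide with the paper's, so this is a legitimate strategy for the existence statement; what it does not produce is an explicit Green current representing the class for singular $T$.

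Two points need repair. First, the linchpin of the whole bootstrap is property (5) for nonsingular $T$, and your stated reason --- ``pointwise $\mathrm{O}(V)(F)$-invariance of the forms'' --- is not the relevant fact: $\mathrm{O}(V)$ acts on $V$, whereas (5) concerns the right $\GL_d$-action on $V^d$. What is actually needed is $Z(xa)=Z(x)$ together with $\mathfrak{g}^\circ(xa,{}^av)=\mathfrak{g}^\circ(x,v)$; the latter follows from the $\alpha$-independence recorded after \eqref{alpha}, because $v=\alpha\,{}^t\alpha$ gives ${}^av=\alpha'\,{}^t\alpha'$ with $\alpha'=\sigma(a^{-1})\alpha$ and $(xa)\alpha'=x\alpha$. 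Second, in the well-definedness step the transition matrix $b=a_2a_1^{-1}$ is block lower triangular with ${}^tb_{11}T_0b_{11}=T_0'$ (so $b_{11}\in\mathrm{O}(T_0)(F)$ only if the two choices happen to produce the same $T_0$; in general you need the nonsingular case of (5) for arbitrary $b_{11}\in\GL_r(F)$), and the block $b_{21}$ acts nontrivially on the Schwartz function, not only on $\theta$: ${}^b\psi(x',x'')$ depends on $b_{21}$ through its first argument, so ${}^b$ does not respect the tensor decomposition. This is harmless, but only because your formula depends on the factors $\varphi^{(d-r)}_j$ solely through $\varphi^{(d-r)}_j(0)$, so that the definition collapses to $\widehat{Z}\big(T_0,v_0,({}^a\varphi)(\cdot,0)\big)\cdot(\widehat{\omega}^{-1})^{d-r}$ and the identity $({}^{ba}\varphi)(x',0)=({}^a\varphi)(x'b_{11}^{-1},0)$ kills the $b_{21}$-dependence. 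You should say this explicitly; phrasing the definition this way also sidesteps the fact that the individual tensor factors $\varphi^{(r)}_j$ need not be $K$-fixed, so the summands in your defining formula are not literally among the classes already constructed.
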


\begin{proof}
This is a minor modification of the construction of \S 5.4 of  \cite{GS}.

We have  defined the forms \eqref{component green} only when $\det(T)\neq 0$.
If we drop this assumption,  the construction of $\omega_M^\circ(T,v,\varphi)$ still makes sense  word-for-word.
The construction of   $\mathfrak{g}_M^\circ(T,v,\varphi)$ does not, because Proposition \ref{prop:general green} only applies to regular tuples, and the tuple  $s(x\alpha)$ appearing in the definition \eqref{alpha} is not regular if $T(x)$ is a singular matrix.  Nevertheless,  Propositions 4.3 and 4.4 of \cite{GS} provide the construction of a current  $ \mathfrak{g}_M^\circ(T,v,\varphi)$ on $M(\C)$ of type $(d-1,d-1)$ satisfying the generalized Green equation
\[
dd^c  \mathfrak{g}_M^\circ(T,v,\varphi)  + \delta_{Z_M(T,\varphi)} \wedge  \Omega^{d-\mathrm{rank}(T) }  
=  \omega_M^\circ(T,v,\varphi)  . 
\]
We remark that when $\mathrm{rank}(T)<d$  the current  $\mathfrak{g}_M^\circ(T,v,\varphi)$  is not represented by a locally integrable form on $M(\C)$.

Now let $\mathfrak{g}$ be any choice of Green current for the  cycle $Z_M(T,\varphi)$ of codimension $r=\mathrm{rank}(T)$.
The  arithmetic cycle class
\[
\widehat{Z}_M(T,v,\varphi) = ( Z_M(T,\varphi),\mathfrak{g}) \cdot ( \widehat{\omega}^{-1} )^{d-r} + ( 0 , \mathfrak{g}_M^\circ(T,v,\varphi) - \mathfrak{g}\wedge \Omega^{d-r} ) 
\]
is easily seen to be independent of $\mathfrak{g}$.  This is the same definition as  (5.158) of  \cite{GS}, except that we have used the class \eqref{shifted tautological}   in place of $L_M^\vee$, and have used the current $\mathfrak{g}_M^\circ(T,v, \varphi)$ instead of the modified version of Definition 4.7 of \cite{GS}.

Properties (1) and (2) are immediate from the definitions.  Property (3) follows from 
$Z_M(0_d,\varphi) = \varphi(0)  M$ and $\mathfrak{g}_M^\circ(0_d,v,\varphi) =0$, as in  (4.43) of \cite{GS}.
Property (4) is a consequence of the relations
\begin{align*}
Z_M (T,\varphi )    & = \varphi^{(d-r)}(0) \cdot  Z_M(T_0 ,  \varphi^{(r)} )   \\
\mathfrak{g}_M^\circ(T,v,\varphi)  
& =\varphi^{(d-r)}(0) \cdot   \mathfrak{g}_M^\circ(T_0 , v_0,  \varphi^{(r)} ) \wedge \Omega^{ d-r}  + \partial A + \overline{\partial}B,
\end{align*}
for  currents $A$ and $B$ on $M(\C)$, as in Examples 2.14 and 4.8 of \cite{GS}.  Property (5) follows from
\begin{align*}
Z_M(T , \varphi ) &= Z_M ( {}^aT   , {}^a\varphi ) \\
\mathfrak{g}_M^\circ (T ,v , \varphi ) &=\mathfrak{g}_M^\circ( {}^aT    ,  {}^av  , {}^a\varphi ) ,
\end{align*}
as in  Remark 4.9 of \cite{GS}.
\end{proof}

\begin{remark}\label{rem:good a}
The arithmetic cycle classes of Theorem \ref{thm:the classes} are uniquely determined by the properties listed there.  The key point is that for any $T$ and $v$ one may find an $a\in \mathrm{GL}_d(F)$ such that the matrices  ${}^aT$ and ${}^a v$ appearing in (5) have the form described in (4).    The classes determined by such matrices are obviously determined by properties (1)-(4).
\end{remark}

We now modify  the arithmetic cycle classes of Theorem \ref{thm:the classes}.
Given data $(T,v,\varphi)$ as in that theorem, choose $a\in \GL_d(F)$ in such a way that 
\[
{}^aT =     \begin{pmatrix}  T_0  \\ & 0_{d-\mathrm{rank}(T)}   \end{pmatrix}    \quad  \mbox{and} \quad
 {}^av =   {}^t\theta \cdot  \begin{pmatrix}   v_0 \\ & w   \end{pmatrix}  \cdot  \theta
\]
have the form described in  part (4), and define 
\begin{align}
\widehat{C}_M (T,v,\varphi)    
& = \widehat{Z}_M  ({}^aT,{}^av,{}^a\varphi)  \nonumber  \\
& \quad +  \big( 0  ,  - \log( \det(w)) \cdot \delta_{Z_M(T,\varphi)} \wedge \Omega^{ d-\mathrm{rank}(T)-1} \big) . \label{best classes}
\end{align}
Note  that if $T$ is not totally positive semi-definite,  then $Z_M(T,\varphi)=0$ and the correction term disappears.
If $\det(T)\neq 0$ we understand $\det(w)=1$, so that the correction term again vanishes, leaving
\[
\widehat{C}_M (T,v,\varphi)   = \widehat{Z}_M  ({}^aT,{}^av,{}^a\varphi)=\widehat{Z}_M  ( T,  v,  \varphi).
\]

\begin{proposition}\label{prop:the mod classes}
The arithmetic cycle class $ \widehat{C}_M (T,v,\varphi) $ does not  depend on the choice of $a \in\GL_d(F)$ used in its construction.  It satisfies all the properties listed in Theorem \ref{thm:the classes}, except that now
\[
\widehat{C}_M (0_d ,v,\varphi) =   \varphi(0)\cdot   \big[ 
 \underbrace {\,  \widehat{\omega}^{-1} \cdots \widehat{\omega}^{-1} } _{d} 
 +   \big( 0 , -\log(\det(v)) \cdot  \Omega^{d-1} \big) \big] .
\]
In particular, if $\varphi=\varphi_1\otimes \cdots \cdots \otimes \varphi_d$ is a pure tensor then
\[
\widehat{C}_M (0_d ,v ,\varphi) =\widehat{C}_M(0 ,v_1,\varphi_1)  \cdots  \widehat{C}_M(0 ,v_d,\varphi_d) ,
\]
where $v_1,\ldots, v_d$ are the eigenvalues of $v$, and 
\[
\widehat{C}_M(0 ,v_i ,\varphi_i) =   
\varphi_i(0) \cdot \big[ 
 \widehat{\omega}^{-1}  +   ( 0 , -\log( v_i )  )]  \in  \widehat{\mathrm{CH}}^1 (M).
\]
\end{proposition}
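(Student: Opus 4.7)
The plan is to derive each claim from the corresponding property of $\widehat{Z}$ in Theorem~\ref{thm:the classes}, supplemented by the standard fact that $\widehat{\mathrm{div}}(c) = (0, -\log|\sigma(c)|^2) = 0$ in $\widehat{\mathrm{CH}}^1(M)$ for every $c \in F^\times$.  All manipulations will take place in the arithmetic Chow ring.

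I expect the main obstacle to be independence of the choice of $a$.  If $a,b = ac \in \mathrm{GL}_d(F)$ are two valid choices, the requirement that both ${}^aT$ and ${}^bT$ have the prescribed block form forces $c$ to be block lower triangular with diagonal blocks $c_1 \in \mathrm{GL}_r(F)$ and $c_3 \in \mathrm{GL}_{d-r}(F)$.  Property~(5) of Theorem~\ref{thm:the classes} handles the $\widehat{Z}$ summand, so the two candidates for $\widehat{C}(T,v,\varphi)$ differ only in the correction term; a Schur-complement computation on ${}^bv = \sigma(c^{-1})\,{}^av\,\sigma({}^tc^{-1})$ gives $\det(w_b) = \sigma(\det c_3)^{-2}\det(w_a)$, so the discrepancy is the class $(0, -2\log|\sigma(\det c_3)| \cdot \delta_{Z(T,\varphi)} \wedge \Omega^{d-r-1})$.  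To kill this I would multiply the trivial class $\widehat{\mathrm{div}}(\sigma(\det c_3)) \in \widehat{\mathrm{CH}}^1(M)$ by $(Z(T,\varphi),\mathfrak{g}) \cdot (\widehat{\omega}^{-1})^{d-r-1}$; the arithmetic star product with the constant $G = -2\log|\sigma(\det c_3)|$ yields the class $(0, -2\log|\sigma(\det c_3)| \cdot \omega_{Z(T,\varphi)} \wedge \Omega^{d-r-1})$, and this agrees with the discrepancy in $\widehat{\mathrm{CH}}^d(M)$ because $\omega_Z - \delta_Z = dd^c\mathfrak{g}$ and $\Omega$ is closed, so the difference is $dd^c$-exact.

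With well-definedness in hand, properties~(1), (2), and (5) of Theorem~\ref{thm:the classes} follow immediately from their analogues for $\widehat{Z}$, together with the observations that $\det(w) = 1$ when $r = d$ (by convention) and that $Z(T,\varphi) = Z({}^bT,{}^b\varphi)$ for $b \in \mathrm{GL}_d(F)$.  Property~(3) follows by taking $a = 1$, so that $r = 0$, $w = v$, and $\delta_{Z(0_d,\varphi)} = \varphi(0) \cdot \delta_M$, directly yielding the modified formula.  For property~(4) I would expand $\widehat{C}(T_0,v_0,\varphi^{(r)}) \cdot \widehat{C}(0_{d-r},w,\varphi^{(d-r)})$ using property~(4) for $\widehat{Z}$, the identity $Z(T,\varphi) = \varphi^{(d-r)}(0) \cdot Z(T_0,\varphi^{(r)})$ (which uses that $V$ is anisotropic, so $T(x'') = 0$ forces $x'' = 0$), and the Chow-group identity $(Y,g) \cdot (0, c \cdot \Omega^{d-r-1}) = (0, c \cdot \delta_Y \wedge \Omega^{d-r-1})$, to verify that the correction terms on both sides match.

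The pure tensor formula then follows by expanding $\prod_i \widehat{C}(0,v_i,\varphi_i) = \prod_i \varphi_i(0) \cdot [\widehat{\omega}^{-1} + (0, -\log v_i)]$ and observing that any product of two or more archimedean factors $(0, -\log v_i) \cdot (0, -\log v_j)$ vanishes: the star product $\delta_0 \cdot G_2 + G_1 \cdot dd^c G_2$ reduces to $0$ when $G_2$ is constant.  Only the $\widehat{\omega}^{-d}$ term and the linear contributions $\sum_i(0, -\log v_i) \cdot (\widehat{\omega}^{-1})^{d-1} = (0, -\log\det(v) \cdot \Omega^{d-1})$ survive, which (after multiplying by $\varphi(0) = \prod_i \varphi_i(0)$) recovers the formula from property~(3) applied to $\widehat{C}(0_d,v,\varphi)$.
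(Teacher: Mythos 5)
Your proposal is correct and follows essentially the same route as the paper: the discrepancy between two admissible choices of $a$ is $\big(0,-\log \sigma(\xi^2)\cdot \delta_{Z(T,\varphi)}\wedge \Omega^{d-r-1}\big)$ for some $\xi\in F^\times$, and it is killed by the rational triviality of arithmetic divisors of constants (the paper pushes $\widehat{\mathrm{div}}(\xi)$ forward from $Z(T,\varphi)$ rather than intersecting $\widehat{\mathrm{div}}(\xi)$ on $M$ with the cycle class as you do, but the two realizations differ by a $dd^c$-exact current, exactly as you observe). Your verifications of the remaining properties and of the pure-tensor formula, which the paper leaves to the reader, are correct.
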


\begin{proof}
For the independence of the choice of $a$, a linear algebra exercise shows that choosing a different $a$ has the effect of multiplying  both $\det(v_0)$ and  $\det(w)$ by nonzero squares in $\sigma(F)$.
Thus it suffices to show that the arithmetic cycle class
\begin{equation}\label{trivial error}
 ( 0  ,  - \log \sigma(\xi^2) \cdot \delta_{Z_M(T,\varphi)} \wedge \Omega^{ d-\mathrm{rank}(T)-1} ) \in \widehat{\mathrm{CH}}^d(M)
\end{equation}
is trivial for any $\xi \in F^\times$.  If we view  $\xi$ as a (constant) rational function on $Z_M(T,\varphi)$, it determines an arithmetic cycle 
\[
\big( i_* \mathrm{div}(\xi) , i_* [ -\log \sigma(\xi^2)] \big) = \big( 0 , -\log \sigma(\xi ^2)  \wedge \delta_{Z(T,\varphi)} \big) 
\in \widehat{Z}^{ \mathrm{rank}(T)+1 }(M),
\]
 where $i : Z_M(T,\varphi) \to M$ is the inclusion.
As in the discussion leading to Definition 1 in \S III.1.1 of \cite{SouleBook}, this arithmetic cycle  is   trivial in the arithmetic Chow group.  On the other hand, its arithmetic intersection with $d-\mathrm{rank}(T)-1$ copies of $\widehat{\omega}^{-1}$ is \eqref{trivial error}, which is therefore also trivial.
 
The remaining claims follow from  Theorem \ref{thm:the classes} and the definitions.
\end{proof}

\begin{remark}\label{rem:classes rep}
If there is no $x\in V^d$ such that $T(x)=T$, then
\[
\widehat{C}_M(T,v,\varphi ) = \widehat{Z}_M(T,v,\varphi ) =  0 .
\]
If $T$ is nonsingular, this is clear from the definitions.
The general case can be reduced   to the nonsingular case using Remark \ref{rem:good a}.
\end{remark}

\begin{remark}
Our classes \eqref{best classes} agree with those of (5.158) of \cite{GS} when $\det(T) \neq 0$.
For singular matrices they do not quite agree.  
As remarked in the proof of Theorem \ref{thm:the classes}, the classes $\widehat{Z}_M(T,v,\varphi)$ differ from the Garcia-Sankaran classes in two ways: the extra factor of $-\log(2\pi e^\gamma)$ in \eqref{shifted tautological}, and the use of the  current $\mathfrak{g}^\circ_M(T,v,\varphi)$ instead of the modified current of Definition 4.7 of \cite{GS}.  
 Using the vanishing of \eqref{trivial error}, one can see that adding the correction term in \eqref{best classes} eliminates the second of these two differences.  Thus the only difference between our $\widehat{C}_M(T,v,\varphi )$  and the classes of Garcia-Sankaran is the shifted metric in \eqref{shifted tautological}.
 \end{remark}

%%%%%%%%%%%%%%%%%%%%%%%%%%%%%%%%%%%

\subsection{The pullback formula}
%\label{ss:pullback statement}

%%%%%%%%%%%%%%%%%%%%%%%%%%%%%%%%%%%

We now state our main result.  The proof will occupy the rest of the paper.

Suppose we are given an orthogonal decomposition
\[
V = V_0 \oplus W
\]
with $V_0$ and $W$ of dimensions $n_0+2\ge 3$ and $m$, respectively.
Assume moreover that   $W_\tau = W\otimes_{F,\tau}\R$ is positive definite for every $\tau :F \to \R$.  
The assumptions on $V$ imposed in \S \ref{ss:ortho hermitian}  imply that $V_{0,\tau}=V_0\otimes_{F,\tau}\R$ has signature
\[
\mathrm{sig}( V_{0,\tau} ) = \begin{cases}
(n_0,2) & \mbox{if }\tau=\sigma \\
(n_0+2,0) &\mbox{if }\tau \neq \sigma.
\end{cases}
\]

The quadratic space $V_0$ therefore has its own Shimura datum $(G_0,\DD_0)$, and the the inclusion $V_0\subset V$  
induces a injection  of Shimura data 
\[
i_0 : (G_0,\DD_0) \to (G,\DD)
\]
 realizing  $\DD_0 \subset \DD$ as a codimension $m$ submanifold.    Fix a compact open subgroup $K_0 \subset G_0(\A_f) \cap K$,  and let $M_0$ be the associated Shimura variety over $F=\sigma(F)$ with complex points 
\[
M_0(\C) = G_0(\Q) \backslash \DD_0 \times G_0(\A_f) / K_0.
\]
The induced map $i_0 : M_0 \to M$ is finite and unramified.  
The Shimura variety $M_0$ has its own hermitian line bundle $L_{M_0}$, related to the one on $M$ by 
a canonical isomorphism 
\[
L_{M_0} \iso i_0^*L_M.
\]

 \begin{hypothesis}
 We assume throughout that the compact open subgroups $K_0\subset G_0(\A_f)$ and $K\subset G(\A_f)$ have been chosen so that \[i_0 : M_0 \to M\] is a closed immersion.  This is always possible, by Proposition 1.15 of \cite{Deligne}.
 \end{hypothesis}

\begin{theorem}\label{thm:pullback}
Assume that $V$ is anisotropic.
Fix an integer $1\le d \le n_0+1$ and  a $K$-fixed   Schwartz function
\[
\varphi =  \varphi_{0} \otimes \psi \in S( \widehat{V}_0^d )^{K_0} \otimes S(\widehat{W}^d) \subset S( \widehat{V}^d)
\]
with both factors  $\varphi_0$ and $\psi$ valued in $\Z$.
Recall  that \S \ref{ss:canonical cycle construction} associates to any $T\in \Sym_d(F)$ and any positive definite $v\in \Sym_d(\R)$   arithmetic cycle classes 
\begin{equation}\label{two classes}
\widehat{C}_M(T,v,\varphi) \in \widehat{\mathrm{CH}}^d(M) \qquad \mbox{and} \qquad \widehat{C}_{M_0}(T,v,\varphi_0) \in \widehat{\mathrm{CH}}^d(M_0). 
\end{equation}
The specialization to the normal bundle  
\[
\sigma_{M_0/M} : \widehat{\mathrm{CH}}^d(M) \to \widehat{\mathrm{CH}}^d( N_{M_0/M}) 
\]
of Theorem \ref{thm:arithmetic specialization} satisfies
\[
\sigma_{M_0/M} \big(  \widehat{C}_M(T,v ,\varphi) \big)   =
   \sum_{ \substack{ T_0 \in \Sym_d( F ) \\ y\in W^d  \\  T_0+T(y) = T   }  } 
  \psi (y)    \cdot \pi_0^*  \widehat{C}_{M_0}(T_0,v,\varphi_{0}) ,
\]
where $\pi_0 : N_{M_0/M} \to M_0$ is the bundle map.
\end{theorem}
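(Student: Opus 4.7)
My plan is to compute $\sigma_{M_0/M}(\widehat{C}_M(T,v,\varphi))$ explicitly by unwinding the definition of $\widehat{C}_M(T,v,\varphi)$ on each component of $M(\C)$ and applying the specialization results of \S\ref{s:green} to each summand. On a component, $\widehat{C}_M(T,v,\varphi)$ is built (via \S\ref{ss:cycle construction}) from arithmetic cycles $[Z(x), \mathfrak{g}^\circ(x,v)]$ indexed by $x \in V^d$ with $T(x) = T$, weighted by $\varphi$. The orthogonal decomposition $V = V_0 \oplus W$ gives $x = x_0+y$ with $T(x) = T(x_0) + T(y)$, and for a pure-tensor Schwartz function $\varphi = \varphi_0 \otimes \psi$ the weight factors as $\varphi_0(x_0)\psi(y)$. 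Thus the theorem reduces to verifying that for each pair $(x_0,y)$ the specialization $\sigma_{M_0/M}[Z(x_0+y), \mathfrak{g}^\circ(x_0+y,v)]$ matches the per-$x_0$ contribution to $\pi_0^*\widehat{C}_{M_0}(T(x_0),v,\varphi_0)$ on the right.

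For each $x = x_0+y$, I would select $\alpha \in \GL_d(\R)$ with $\alpha{}^t\alpha = v$ whose last $\ell$ columns span the kernel of the linear map $x_0 : \R^d \to V_{0,\sigma}$ sending $c \mapsto \sum_j c_j x_{0,j}$ (where $k = \mathrm{rank}(x_0)$ and $\ell = d-k$). Such $\alpha$ exist by a direct Schur-complement construction applied to the block decomposition of $v$, and since the Garcia-Sankaran form $\mathfrak{g}^\circ(x,v) = \mathfrak{g}^\circ(s(x\alpha))$ is independent of the choice of $\alpha$, this is a free choice. In this $\alpha$, the section tuple $s(x\alpha)$ splits as the concatenation of a piece $p$ of length $k$ whose $V_0$-projections are linearly independent (so $p|_{\DD_0}$ is a smooth tuple of sections on $\DD_0$) and a purely degenerating piece $q$ of length $\ell$ with components $s(y'_j)$, $y'_j \in W_\sigma$. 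Each $s(y'_j)$ arises as $\widetilde{y}_j(u)$ from the vector bundle $N = W_\sigma^\vee \otimes L^\vee$ whose distinguished section has zero locus $\DD_0$, placing us squarely in the setting of \S\ref{ss:explicit expansions}.

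Applying Propositions~\ref{prop:intersection cycle spec}, \ref{prop:special star}, and~\ref{prop:the specializations}, the specialization $\sigma_{M_0/M}[Z(x), \mathfrak{g}^\circ(x,v)]$ is represented, modulo $\partial$- and $\overline{\partial}$-exact currents, by the arithmetic intersection of $\pi_0^*[Z(p|_{M_0}), \mathfrak{g}^\circ(p|_{M_0})]$ with $\ell$ copies of $\pi_0^*\widehat{\omega}^{-1}_{M_0}$. The proper part follows from Proposition~\ref{prop:the specializations}(first statement), and each degenerating factor from the third statement, which identifies $\sigma_{M_0/M}(\mathfrak{g}^\circ(s(y'_j)))$ with $-\log(2\pi e^\gamma h(\sigma_{M_0/M}(s(y'_j))))$---exactly a Green current representative of $\pi_0^*\widehat{\omega}^{-1}_{M_0}$ using the section $\sigma_{M_0/M}(s(y'_j))$ of $\pi_0^*L_{M_0}^\vee$, since $\widehat{\omega}^{-1}$ was defined as $L_M^\vee$ shifted by $(0,-\log(2\pi e^\gamma))$. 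Summing over $x$ with Schwartz weights and invoking the $\GL_d(F)$-equivariance of Theorem~\ref{thm:the classes}(5) together with Proposition~\ref{prop:the mod classes} (which packages the product of a nonsingular $\widehat{Z}_{M_0}$-class with powers of $\widehat{\omega}^{-1}_{M_0}$, plus the $-\log\det(w)$ correction, as $\widehat{C}_{M_0}(T_0,v,\varphi_0)$), the left-hand side reorganizes into the right-hand side. The singular-$T$ case follows from the nonsingular one by combining with the $\widehat{\omega}^{-1}_M$-factors in the definition of $\widehat{C}_M(T,v,\varphi)$ and the identity $\sigma_{M_0/M}\widehat{\omega}^{-1}_M = \pi_0^*\widehat{\omega}^{-1}_{M_0}$.

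The main technical obstacle is the bookkeeping required to reconcile the per-$x$ choice of $\alpha$ with the global structure of the right-hand side: the combined metric contributions of the $\ell$ degenerating factors in the star product, together with the dependence of $p|_{\DD_0}$ on the first $k$ columns of $\alpha$, must assemble exactly into the $v$-dependent class $\widehat{C}_{M_0}(T_0,v,\varphi_0)$, including its $-\log\det(w)\cdot \Omega^{d-\mathrm{rank}(T_0)-1}$ Green correction from Proposition~\ref{prop:the mod classes}. The precise form of the logarithmic expansions in Lemmas~\ref{lem:nice expansions} and~\ref{lem:very nice expansions}, which tracks exactly the leading and $\log|\tau|$ coefficients of $\mathfrak{g}^\circ(q)$ and $\mathfrak{g}^\circ(q_j)$, is designed to enable this matching, but verifying that all correction terms align is the most delicate step.
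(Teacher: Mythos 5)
Your overall strategy is the paper's: decompose each $x\in V^d$ with $T(x)=T$ as $x_0+y$ along $V=V_0\oplus W$, split the section tuple $s(x\alpha)$ into a part meeting $\DD_0$ properly and a purely degenerating part, apply Propositions \ref{prop:intersection cycle spec}, \ref{prop:the specializations} and \ref{prop:special star} to each summand, and reassemble using the $\GL_d(F)$-equivariance and Proposition \ref{prop:the mod classes}, with singular $T$ handled by reduction to the nonsingular block. The only organizational difference is that you choose a real $\alpha$ adapted to $\ker(T_0)$ directly, whereas the paper first applies a rational $a\in\GL_d(F)$ putting $(T_0,v)$ in block form (which, by anisotropy of $V_0$, forces the last $d-r$ components of every relevant $x_0$ to vanish) and then takes $\alpha$ block diagonal; the two are equivalent once you invoke the equivariance of Theorem \ref{thm:the classes}(5), as you do.

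The one place your sketch goes wrong as stated is the claim that each degenerating factor $\sigma_{\DD_0/\DD}(\mathfrak{g}^\circ(s(y'_j)))$ is ``exactly a Green current representative of $\pi_0^*\widehat{\omega}_0^{-1}$.'' The vectors $y'_j$ are \emph{real} linear combinations of elements of $W$ (they are entries of $y\alpha$ with $\alpha\in\GL_d(\R)$), so the corresponding sections of $\pi_0^*L_{M_0}^\vee$ and their divisors are not defined over the reflex field, and the individual factors are not arithmetic divisors on $N_{M_0/M}$ at all. Only the full $\ell$-fold product has a rational underlying cycle, and its class is not $\pi_0^*\widehat{\omega}_0^{-\ell}$ but differs from it by $\pi_0^*\big(0,-\log\det(w)\cdot\Omega_0^{\ell-1}\big)$ --- which is precisely how the correction term you flag in your last paragraph arises. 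This is the content of Proposition \ref{prop:key degenerate specialization} and Lemma \ref{lem:twist taut}: one first constructs rational sections $Q_i$ of $\pi_0^*L_{M_0}^\vee$ on the canonical model (Lemma \ref{lem:canonical normal}), shows the untwisted product represents $\pi_0^*\widehat{\omega}_0^{-\ell}$, and then computes the effect on the Green current alone of the real twist by $\beta$ with $w=\beta\cdot{}^t\beta$. Without this step your third and fourth paragraphs are in tension: if each factor were literally $\pi_0^*\widehat{\omega}_0^{-1}$ there would be no $\log\det(w)$ correction left to account for.
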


Theorem \ref{thm:pullback} will be proved below.  
First, we record a corollary explaining the precise connection between the classes of \eqref{two classes}.

 \begin{corollary}\label{cor:pullback}
Keeping the notation and assumptions of Theorem \ref{thm:pullback}, the pullback 
\[
i_0^* : \widehat{\mathrm{CH}}^d(M) \to \widehat{\mathrm{CH}}^d( M_0) 
\]
satisfies
\[
i_0^* \widehat{C}_M(T,v,\varphi) =  
 \sum_{ \substack{ T_0 \in \Sym_d( F ) \\ y\in W^d  \\  T_0+T(y) = T   }  }   \psi(y)   \cdot    \widehat{C}_{M_0}(T_0,v,\varphi_{0}).
\]
 \end{corollary}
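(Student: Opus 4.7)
The plan is to deduce the corollary as a formal consequence of Theorem \ref{thm:pullback} together with the two main structural results about the specialization map from \S\ref{s:arithmetic intersection theory}, namely Theorem \ref{thm:arithmetic specialization} and Proposition \ref{prop:injective normal}.

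First I would rewrite the left-hand side of the identity supplied by Theorem \ref{thm:pullback}. That theorem gives
\[
\sigma_{M_0/M}\big(\widehat{C}_M(T,v,\varphi)\big)
= \sum_{\substack{T_0\in\Sym_d(F)\\ y\in W^d\\ T_0+T(y)=T}}\sum_j \psi_j(y)\cdot \pi_0^*\widehat{C}_{M_0}(T_0,v,\varphi_{0,j}),
\]
and by Theorem \ref{thm:arithmetic specialization} the map $\sigma_{M_0/M}$ on $\widehat{\mathrm{CH}}^d(M)$ factors as the composition
\[
\widehat{\mathrm{CH}}^d(M)\xrightarrow{\,i_0^*\,}\widehat{\mathrm{CH}}^d(M_0)\xrightarrow{\,\pi_0^*\,}\widehat{\mathrm{CH}}^d(N_{M_0/M}).
\]
Thus the left-hand side of the displayed equation is exactly $\pi_0^*\big(i_0^*\widehat{C}_M(T,v,\varphi)\big)$, while the right-hand side is $\pi_0^*$ applied to the class
\[
\sum_{\substack{T_0\in\Sym_d(F)\\ y\in W^d\\ T_0+T(y)=T}}\sum_j \psi_j(y)\cdot \widehat{C}_{M_0}(T_0,v,\varphi_{0,j})\in\widehat{\mathrm{CH}}^d(M_0),
\]
since $\pi_0^*$ is a group homomorphism.

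The final step is to cancel $\pi_0^*$ from both sides. This is legitimate because Proposition \ref{prop:injective normal} asserts that $\pi_0^*:\widehat{\mathrm{CH}}^d(M_0)\to\widehat{\mathrm{CH}}^d(N_{M_0/M})$ is injective. Applying that injectivity yields the pullback formula
\[
i_0^*\widehat{C}_M(T,v,\varphi)
= \sum_{\substack{T_0\in\Sym_d(F)\\ y\in W^d\\ T_0+T(y)=T}}\sum_j \psi_j(y)\cdot \widehat{C}_{M_0}(T_0,v,\varphi_{0,j})
\]
in $\widehat{\mathrm{CH}}^d(M_0)$, as desired. Since all real work has been invested in Theorem \ref{thm:pullback}, there is no serious obstacle here; the only thing one must be slightly careful about is that the indexing sum on the right is locally finite (so that it represents an honest element of $\widehat{\mathrm{CH}}^d(M_0)$), but this finiteness is already implicit in Theorem \ref{thm:pullback}, since the same sum appears after applying $\pi_0^*$ and $\pi_0^*$ is injective. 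In short, the corollary is essentially a formal rewriting of the specialization formula using $\sigma_{M_0/M}=\pi_0^*\circ i_0^*$ and the injectivity of $\pi_0^*$.
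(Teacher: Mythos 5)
Your proposal is correct and is exactly the paper's argument: the proof of Corollary \ref{cor:pullback} in the text likewise combines Theorem \ref{thm:pullback} with the factorization $\sigma_{M_0/M}=\pi_0^*\circ i_0^*$ from Theorem \ref{thm:arithmetic specialization} and the injectivity of $\pi_0^*$ from Proposition \ref{prop:injective normal}. Your side remark about finiteness of the sum is harmless but unnecessary, since $V$ is anisotropic and $W$ is totally positive definite, so only finitely many pairs $(T_0,y)$ contribute.
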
 
 
 \begin{proof}
 This is immediate from Theorems \ref{thm:arithmetic specialization} and \ref{thm:pullback}, along with the injectivity of $\pi_0^*$ proved in Proposition \ref{prop:injective normal}.
 \end{proof}

 %%%%%%%%%%%%%%%%%%%%%%%%%%%
 
\subsection{Specialization of degenerate cycles}
 
 %%%%%%%%%%%%%%%%%%%%%%%%%%%%

We now state and prove the key ingredient in the proof of Theorem \ref{thm:pullback}. This is Proposition \ref{prop:key degenerate specialization} below, which  allows us to compute the specializations to the normal bundle $N_{M_0/M}$ of those arithmetic cycles on $M$ that intersect $M_0$ improperly.

The action of $G_0(\R)$ on the pair $\DD_0 \subset \DD$ induces an action on   $N_{\DD_0/\DD}$, and the normal bundle to  $M_0 \to M$ has complex points
\[
N_{M_0/M} (\C) = G_0(\Q) \backslash N_{\DD_0/\DD} \times G_0(\A_f) / K_0 .
\]
As in \eqref{shimura component}, every $g\in G_0(\A_f)$ determines a commutative diagram
\begin{equation}\label{normal uniformization}
\xymatrix{
{  \Gamma_g  \backslash N_{\DD_0/\DD}  }  \ar[d] \ar[rrr]^{ z\mapsto (z,g) } &  & &  { N_{M_0/M} (\C) }  \ar[d]^{\pi_0}  \\
{ \Gamma_g    \backslash \DD_0   } \ar[rrr]^{ z\mapsto (z,g) }    &  & & M_0(\C) 
} 
\end{equation}
in which $\Gamma_g = gK_0g^{-1} \cap G_0(\Q)$, and the horizontal arrows are open and closed immersions.

Define complex manifolds
\[
X_0  = \bigsqcup_g \Gamma_g \backslash \DD_0 
\quad \mbox{and} \quad 
X  = \bigsqcup_g \Gamma_g \backslash \DD  ,
\]
where both unions are taken over a set of representatives  for the double quotient $G_0(\Q) \backslash  G_0(\A_f) / K_0$.   This gives a diagram of complex manifolds
\[
\xymatrix{
{  X_0  } \ar@{=} [d] \ar[r]  &  {  X } \ar[d]   \\
{ M_0(\C) }   \ar[r] & { M(\C), }  
}
\]
in which the horizontal arrows are closed immersions, and the right vertical arrow  is a  holomorphic covering of the union of all  connected components of $M(\C)$ having nonempty intersection with $M_0(\C)$.
There are canonical identifications 
\begin{equation}\label{partial normal}
N_{ M_0 / M } (\C) \iso 
N_{X_0/X} \iso \bigsqcup_g \Gamma_g \backslash N_{\DD_0/\DD}
\end{equation}
of holomorphic vector bundles on $X_0 = M_0(\C)$.  Note that $X$, unlike $X_0$,  is not (in any obvious way) the complex points of an algebraic variety.

 Fix a tuple  $y=(y_1,\ldots, y_d) \in W^d$ with linearly independent components, and a  positive definite $w\in \Sym_d(\R)$.
 As explained in \S \ref{ss:ortho hermitian},  this data determines a pair 
  \begin{equation}\label{r degen}
  ( Z_\DD(y) , \mathfrak{g}_\DD^\circ (y,w) )
   \end{equation}
   consisting of an analytic cycle  $ Z_\DD(y) \subset \DD$
  and a Green current for it, represented by a smooth form on $\DD \smallsetminus Z_\DD(y)$.
  Because the group $G_0(\Q)$ acts trivially on the subspace $W \subset V$, hence fixes $y$ componentwise, this pair   is invariant under the action of each $\Gamma_g$.  Thus it descends to each quotient $\Gamma_g \backslash \DD$, and by varying $g$ we  obtain a pair 
\begin{equation}\label{cone pair}
( Z_X(y) , \mathfrak{g}^\circ_X(y,w) )
\end{equation}
consisting of an analytic cycle $Z_X(y) \subset X$ and a Green current for it, represented by a smooth form on $X \smallsetminus Z_X(y)$.  Alternatively, rather than descending from $\DD$, one could obtain this pair by simply repeating the construction of \eqref{r degen} with $\DD$ replaced by $X$ everywhere.

  Using the constructions of \S \ref{ss:hu}, one can specialize \eqref{cone pair} to a  pair  
\begin{equation}\label{r degen spec}
(Z(\C) ,\mathfrak{g})  \define  \big(  \sigma_{X_0/X}( Z_X(y) )   ,  \sigma_{X_0/X}  ( \mathfrak{g}_X^\circ (y,w)   )  \big)
\end{equation}
on the normal bundle \eqref{partial normal}.  Equivalently, one could specialize \eqref{r degen} to obtain a $G(\Q)$-invariant pair on the normal bundle $N_{\DD_0/\DD}$, pass to the quotient by each $\Gamma_g$ in \eqref{partial normal}, and then vary $g$ to obtain a pair on $N_{X_0/X}$.

\begin{remark}
In specializing $\mathfrak{g}_X^\circ (y,w)$ to $N_{X_0/X}$,  we are using Theorem \ref{thm:hu} and \eqref{green is log} to guarantee the existence of a logarithmic expansion of $\mathfrak{g}_X^\circ (y,w)$ along $X_0 \subset X$.  
Alternatively, we will soon see that $\mathfrak{g}_X^\circ(y,w)$ is a special example of the Green form obtained from a tuple of degenerating  sections in the sense of \S \ref{ss:explicit expansions}, and so it has  logarithmic expansion of the more concrete type described in Lemma \ref{lem:nice expansions}. 
\end{remark}

The pair \eqref{r degen spec} is a subtle  thing to understand, as   the intersection of $X_0$ with $Z_X(y)$ is  improper (in fact  $X_0 \subset Z_X(y)$).
It is not even obvious that the analytically defined cycle $Z(\C)$ on \eqref{partial normal}  is algebraic, let alone that that it is defined over the reflex field.
Nevertheless, the following proposition gives us good control over it.

 \begin{proposition}\label{prop:key degenerate specialization}
 The analytic cycle $ Z(\C)   \subset N_{X_0/X}$  in \eqref{r degen spec} is the complexification of an  algebraic cycle
 $
 Z \subset N_{M_0/M},
 $
 and the equality 
\[
( Z,\mathfrak{g})   = \pi_0^*  \big(
 \underbrace {\,  \widehat{\omega}_0^{-1} \cdots \widehat{\omega}_0^{-1} } _{d} \big) 
 +  \pi_0^* \big( 0 , -\log(\det(w)) \cdot  \Omega_0^{d-1} \big) \big)
\]
holds in the codimension $d$ arithmetic Chow group of $N_{M_0/M}$.
Here $\pi_0 : N_{M_0/M} \to M_0$ is the bundle map, $\widehat{\omega}_0^{-1}$ is the analogue of \eqref{shifted tautological} on $M_0$, and $\Omega_0 \in E^{1,1}_{M_0(\C)}$ is its Chern form.
 \end{proposition}

The proof of Proposition \ref{prop:key degenerate specialization}, which occupies the remainder of this subsection, uses  the  degenerating sections of  \S \ref{ss:explicit expansions} in an essential way.
The closed immersion $\DD_0 \subset \DD$ admits a presentation of the type considered in \S \ref{ss:explicit expansions}.  
 More precisely, if we denote by $W_\DD = W_\sigma \otimes_\R \co_\DD$ the constant vector bundle on $\DD$ with fibers $W_\sigma \otimes_\R \C$,   so that $W_\DD \subset V_\DD$,  the composition
\begin{equation}\label{DNu}
 W_\DD \to  V_\DD / L_\DD^\perp \map{\eqref{dR duality}} L_\DD^\vee
\end{equation}
defines a global section of
\begin{equation}\label{DNbundle}
N_\DD=  \underline{\Hom}(  W_\DD ,  L_\DD^\vee  )
\end{equation}
with  vanishing locus  $\DD_0$.  
In particular,  
\[
 N_{\DD_0/\DD}\stackrel{\eqref{normal identification}} {\iso}
  \underline{\Hom}(  W_{\DD} ,  L_{\DD}^\vee  )|_{\DD_0}
 \iso
   \underline{\Hom}(  W_{\DD_0} ,  L_{\DD_0}^\vee  ).
\]
These isomorphisms are equivariant with respect the natural actions of $G_0(\Q)$, and so, using \eqref{partial normal}, define an isomorphism
\begin{equation}\label{X explicit normal}
N_{X_0/X} \iso   \underline{\Hom}(  W_{X_0} ,  L_{X_0}^\vee  )
\end{equation}
of holomorphic vector bundles on $X_0 = M_0(\C)$.  
Here $W_{X_0}$ and $L_{X_0}^\vee$ have the obvious meanings: they are constructed from the vector bundles $W_{\DD_0}$ and $L_{\DD_0}^\vee$ using \eqref{partial normal}.
 We now explain how  to algebraize \eqref{X explicit normal}.

\begin{lemma}\label{lem:canonical normal}
Let $W_{M_0} = W \otimes_F \co_{M_0}$ be the constant vector bundle.
There is an isomorphism
\[
N_{M_0/M} \iso \underline{\Hom}(W_{M_0} , L_{M_0}^\vee)
\]
of vector bundles on  $M_0$ that agrees, using the first  identification in \eqref{partial normal}, with  \eqref{X explicit normal} on the complex fiber.
\end{lemma}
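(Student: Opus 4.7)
The plan is to replicate the definition of the section $u$ on $\DD$ directly at the level of the canonical model. The orthogonal decomposition $V = V_0 \oplus W$ is defined over $F$, so the constant vector bundle $W_M = W \otimes_F \co_M$ sits inside $V_M$ as an $F$-rational subbundle. Using the $F$-rational filtration $L_M \subset L_M^\perp \subset V_M$ and the canonical isomorphism \eqref{canonical dR duality}, the composition
\[
W_M \hookrightarrow V_M \twoheadrightarrow V_M/L_M^\perp \iso L_M^\vee
\]
defines a morphism of vector bundles on $M$, equivalently a global section
\[
u_M \in H^0(M,\,\underline{\Hom}(W_M, L_M^\vee)).
\]
Because the filtration, the pairing, and the identification \eqref{canonical dR duality} are all defined so that they pull back to their analytic analogues on $\DD$, the pullback of $u_M$ under the complex uniformization is exactly the section $u$ constructed in the text.

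Next I would check that $u_M$ vanishes scheme-theoretically along $M_0$. At any point of $\DD_0$, the isotropic line $L_z \subset V_{0,\sigma}\otimes\C$ is orthogonal to $W_\sigma \otimes \C$, so $W_\sigma\otimes\C \subset L_z^\perp$, and the fiber of $u$ at $z$ is therefore zero. Hence $u_M|_{M_0}$ vanishes on all complex points of $M_0$, and therefore vanishes as an algebraic section of $\underline{\Hom}(W_{M_0}, L_{M_0}^\vee)$ by reducedness.

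The heart of the proof is then to upgrade this vanishing to the statement that $u_M$ is a \emph{regular} section, whose normal bundle realizes the desired isomorphism. Evaluation at $u_M$ gives a morphism of $\co_M$-modules
\[
\underline{\Hom}(W_M, L_M^\vee)^\vee \longrightarrow \mathcal{I},
\]
where $\mathcal{I}$ is the ideal sheaf of $M_0 \subset M$, and the goal is to show this map is an isomorphism. Since $M$ and $M_0$ are both smooth over $F$, this can be checked after complex analytification, where it reduces to the corresponding statement about the section $u$ on $\DD$: because $\DD_0$ is a closed complex submanifold of $\DD$ whose codimension agrees with $\dim_F W = \mathrm{rank}\, N$, the section $u$ is automatically regular in the sense of \S\ref{ss:explicit expansions}, and the resulting map $N^\vee \to \mathcal{I}_{\DD_0/\DD}$ is an isomorphism. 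Restricting to $M_0$ and dualizing then yields
\[
N_{M_0/M} \iso \underline{\Hom}(W_M, L_M^\vee)\big|_{M_0} = \underline{\Hom}(W_{M_0}, L_{M_0}^\vee).
\]
The required compatibility with \eqref{explicit normal} on each component \eqref{normal component} is automatic from the construction, since everything was set up to be compatible with the complex uniformization.

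The main obstacle, such as it is, is not so much the regularity and isomorphism statement itself as the bookkeeping needed to ensure that both $W_M \subset V_M$ and the identification \eqref{canonical dR duality} genuinely descend from their analytic counterparts on $\DD$ to the canonical model over $F$. This is part of the standard formalism of automorphic vector bundles on orthogonal Shimura varieties, but it is the one point at which the argument truly uses that the decomposition $V = V_0 \oplus W$ is defined over $F$ rather than merely over $\R$ at the archimedean place $\sigma$.
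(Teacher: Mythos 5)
Your proposal is correct and follows the same route as the paper: build the algebraic section $u_M$ of $\underline{\Hom}(W_M,L_M^\vee)$ from the $F$-rational inclusion $W_M\subset V_M$ and the identification \eqref{canonical dR duality}, verify that its zero locus is $M_0$ by passing to the complex fiber where this is the already-established statement about $u$ on $\DD$, and then apply the construction of \eqref{normal identification}. (You have even silently corrected the paper's typo $V_M/L_M$ to $V_M/L_M^\perp$; the extra bookkeeping you include about descent of the filtration and of \eqref{canonical dR duality} to the canonical model is exactly the content the paper compresses into the phrase ``by the theory of automorphic vector bundles.'')
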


\begin{proof}
The subspace $W \subset V$ is not stable under $G(\Q)$, so does not determine  a sub-bundle of $V_M$.
However, the decomposition $V =  V_0 \oplus W$ is stable under $G_0(\Q)$, which implies that the pullback of $V_M$  via the inclusion $M_0 \to M$  acquires  a canonical splitting
\[
V_M|_{M_0} =  V_{M_0}  \oplus W_{M_0} .
\]
This splitting is orthogonal with respect to the bilinear form \eqref{canonical bilinear}, and the restriction to $M_0$ of the flat connection \eqref{V_M connection} is identified with the sum of the analogous connection on $V_{M_0}$ and the constant connection on $W_{M_0}$ (for which the constant sections   $W \subset H^0( M_0 , W_{M_0})$  are flat).

In particular, any vector $w\in W$ determines a  flat section 
\[
 f_w \in H^0( M_0 , V_M|_{M_0}).
\]
 This section is orthogonal to the line $L_M|_{M_0} \iso L_{M_0} \subset V_{M_0}$, and so lies in the kernel of 
\[
V_M|_{M_0} \map{ \eqref{canonical dR duality}}  L_M^\vee|_{M_0} . 
\]
By  parallel transport (Proposition \ref{prop:parallel transport})  the section  $f_w$  extends to a flat section 
\[
f_w^\square \in H^0( M^\square_0 , V_M|_{M^\square_0})
\]
 over the first order infinitesimal nieghborhood $M_0^\square \subset M$ of $M_0$, whose ideal sheaf $I^2 \subset \co_M$ is the square of the ideal sheaf  $I \subset \co_M$ defining $M_0$.
 The image of $f_w^\square$ under 
 \[
V_M|_{M_0^\square} \map{ \eqref{canonical dR duality}}  L_M^\vee|_{M_0^\square} 
\]
vanishes identically along $M_0 \subset M_0^\square$, so may be viewed as a section of the coherent $\co_M$-module 
\[
 \frac{  I L_M^\vee }{    I^2 L_M^\vee  }   \iso L_M^\vee \otimes  I/I^2.
 \]

The  construction sending $w \in W$ to this last section defines a morphism of $\co_M$-modules
\[
W \otimes_F \co_M \to  L_M^\vee \otimes  I/I^2 .
\]
Restricting  to $M_0$ yields morphism
\[
W_{M_0} \to L_{M_0}^\vee \otimes N_{M_0/M}^\vee,
\]
which we rewrite as 
\[
N_{M_0/M}  \to \underline{\Hom}(W_{M_0} , L_{M_0}^\vee).
\]
By direct comparison of the constructions, one can see that this agrees with  \eqref{X explicit normal} in the complex fiber,  and hence is an  isomorphism.
\end{proof}

Each component $y_i \in W$ of the tuple $y \in W^d$ determines a global section of the constant vector bundle $W_{M_0}$ on $M_0$.  Using Lemma \ref{lem:canonical normal},
this section determines a morphism
\[
y_i : N_{M_0/M} \to L^\vee_{M_0} ,
\] 
which we pull back via the bundle map $\pi_0 : N_{M_0/M} \to M_0$ to a morphism
\[
\pi_0^* y_i : \pi_0^* N_{M_0/M} \to  \pi_0^* L^\vee_{M_0} .
\]
Now apply this morphism to the tautological section
\[
v_0 \in H^0( N_{M_0/M} , \pi_0^* N_{M_0/M} ),
\]
as in \eqref{v_0 taut}, to obtain a global section
\[
Q_i  =( \pi_0^* y_i)  ( v_0)  \in H^0( N_{M_0/M} ,  \pi_0^*L_{M_0}^\vee) .
\]

The following lemma proves the first claim of Proposition \ref{prop:key degenerate specialization}.

\begin{lemma}\label{lem:Zpresentation}
The cycle $Z(\C)  \subset N_{X_0/X}$ from \eqref{r degen spec} is the complexification of the codimension $d$ cycle  
   \[
Z = \mathrm{div}(Q_1) \cdots \mathrm{div}(Q_d)   \subset  N_{M_0/M}
   \]
   obtained by iterated proper intersection.
\end{lemma}

\begin{proof}
It suffices to prove the stated equality after pullback via each of the uniformization maps $N_{\DD_0 / \DD} \to N_{X_0/X}$ of \eqref{partial normal}, so we work over $N_{\DD_0 / \DD}$.

Each $y_i \in W$  determines a global section of the constant bundle $W_\DD$ on $\DD$, and hence, by the definition \eqref{DNbundle}, a morphism
\[
y_i : N_{\DD} \to L_\DD^\vee.
\]
  As in \eqref{degen section},   this morphism determines a degenerating section 
\[
q_i  = y_i(u)  \in H^0(\DD,L_\DD^\vee),
\]
where $u$ is the section of \eqref{DNbundle} determined by \eqref{DNu}.
On the other hand, directly comparing the constructions shows that
\begin{equation}\label{q is s}
q_i  = s(y_i),
\end{equation} 
where the right hand side is the section of $L_\DD^\vee$ defined by  \eqref{domain section}.
 Setting $q=(q_1,\ldots, q_d)$ gives the equality 
\[
Z_\DD(y)=Z_\DD(q)
\]
of  analytic cycles on $\DD$, and we have now shown that 
\begin{equation}\label{abusive Z}
Z (\C) = \sigma_{\DD_0/\DD} ( Z_\DD(q) ),
\end{equation}
where the left hand side now denotes (by abuse of notation) 
the pullback of $Z(\C)$ via  $N_{\DD_0/\DD} \to N_{X_0/X}$.

The construction  \eqref{very degen section} associates to the degenerating section $q_i$ a  section   
\[
\sigma_{\DD_0/\DD}(q_i)  \in H^0( N_{\DD_0/\DD} , \pi_0^* L_{\DD_0}^\vee ) ,
\]
and by directly comparing  the constructions we have
\begin{equation}\label{abusive Q}
Q_i = \sigma_{\DD_0/\DD}(q_i),
\end{equation}
where the left hand side  denotes (by similar abuse of notation) 
the pullback of the complexification of $Q_i$ via  $N_{\DD_0/\DD} \to N_{X_0/X}\iso N_{M_0/M}(\C)$.

By the third claim  of Proposition  \ref{prop:intersection cycle spec}, the tuple 
\[
 \sigma_{\DD_0/\DD} (q )=
 \big( \sigma_{\DD_0/\DD} (q_1) , \ldots,  \sigma_{\DD_0/\DD} (q_d) \big)
 =(Q_1,\ldots, Q_d) 
\]
is smooth, and \eqref{abusive Z} is defined by the vanishing of its components.  
Thus  $Z(\C)$ is defined by the equations $Q_1=\cdots=Q_d=0$, so is equal to 
 the intersection of the divisors of $Q_1,\ldots, Q_d$ by 
Remark \ref{rem:smooth intersection}.
\end{proof}

 As the cycle $Z \subset N_{M_0/M}$ of Lemma \ref{lem:Zpresentation} is presented to us as the proper intersection of the divisors of sections $Q_i \in H^0( N_{M_0/M} ,  \pi^*L_{M_0}^\vee)$, it is easy to construct a Green current for it.  
 Each divisor $\mathrm{div}(Q_i)$ has  a Green current $-\log (2\pi e^\gamma h(Q_i))$, and  the iterated star product 
\[
G= [ -\log (2\pi e^\gamma h(Q_1)) ] \star \cdots \star [ -\log (2\pi e^\gamma h(Q_d)) ] \in D^{d-1,d-1}_{N_{M_0/M}(\C) }
\]
 is a Green current for $Z$.
 
  This construction can be generalized.  For $\beta \in \GL_d(\R)$,  consider the tuple
  \[
  (Q_1',\ldots, Q_d') = (Q_1,\ldots, Q_d) \cdot \beta  \in H^0( N_{M_0/M}(\C) , \pi_0^*L_{M_0(\C)}^\vee )^d
  \]
of sections defined over  the complex fiber (of course they will not be defined over the reflex field $F=\sigma(F)$ unless $\beta$ is).   
Because 
\[
\mathrm{div}(Q_1')\cdots\mathrm{div}(Q_d')=Z(\C),
\]
  the iterated star product 
\[
G(\beta) = [ -\log (2\pi e^\gamma h(Q'_1)) ] \star \cdots \star [ -\log (2\pi e^\gamma h(Q'_d)) ] \in D^{d-1,d-1}_{N_{M_0/M}(\C) }
\]
is a also Green current for $Z$.

 \begin{lemma}\label{lem:twist taut}
 For any $\beta \in \GL_d(\R)$, the pullback of 
 \[
  \underbrace{ \widehat{\omega}_0^{-1} \cdots  \widehat{\omega}_0^{-1}}_{d \mathrm{\ times} } 
  + (0 , -\log|  \det( \beta) |^2 \cdot \Omega_0^{d-1} )  \in \widehat{ \mathrm{CH} }^d( M_0) 
 \]
 via the bundle map $\pi_0 : N_{M_0/M}  \to M_0$ is represented by the arithmetic cycle
 \[
 (Z,G(\beta)) \in \widehat{Z}^d( N_{M_0/M}).
\]
\end{lemma}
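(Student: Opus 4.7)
The plan is to establish the identity in $\widehat{\mathrm{CH}}^d(N_{M_0/M})$ by decomposing an arbitrary $\beta \in \GL_d(\R)$ as $\beta = \delta \alpha$ with $\delta$ diagonal of determinant $\det\beta$ and $\alpha \in \mathrm{SL}_d(\R)$, then handling each factor separately. Observe first that each $Q_i' = \sum_j Q_j \beta_{ji}$ is a global section of $\hat L := \pi_0^* L_{M_0}^\vee$ for which $(\mathrm{div}(Q_i'), -\log(2\pi e^\gamma h(Q_i')))$ represents $\pi_0^*\widehat{\omega}_0^{-1}$ in $\widehat{Z}^1(N_{M_0/M})$. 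Since $\beta$ is invertible, the common vanishing locus of the $Q_i'$ equals $Z$ for every $\beta$, and when the partial intersections of the divisors $\mathrm{div}(Q_k')$ are proper, the Gillet-Soul\'e iterated intersection formula gives $(Z, G(I)) = \pi_0^*\widehat{\omega}_0^{-d}$ in $\widehat{\mathrm{CH}}^d$. It therefore suffices to prove $G(\beta) \equiv G(I) - \log|\beta|^2 \cdot \pi_0^* \Omega_0^{d-1}$ modulo $\mathrm{im}(\partial) + \mathrm{im}(\overline{\partial})$.

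For the diagonal case $\beta = \mathrm{diag}(c_1, \ldots, c_d)$, the sections satisfy $Q_i' = c_i Q_i$ and each factor in the star product is $G_i - \log c_i^2$ where $G_i := -\log(2\pi e^\gamma h(Q_i))$. Using the identities $(G - a) \star H = G \star H - a\,\omega_H$ and $G \star (H - a) = G \star H - a\,\delta_{Z(G)}$ (valid for a scalar $a$, since $dd^c a = 0$), a short induction on the left-associated star product yields
\begin{equation*}
G(\beta) = G(I) - \sum_{i=1}^d (\log c_i^2) \cdot \delta_{Z_1 \cdots Z_{i-1}} \wedge \Omega^{d-i},
\end{equation*}
where $Z_j = \mathrm{div}(Q_j)$, $\Omega = \pi_0^* \Omega_0$, and $\delta_{Z_1 \cdots Z_0} := 1$. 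Iterated Poincar\'e-Lelong along the nested strata $Z_1 \cap \cdots \cap Z_{i-1}$ produces $\delta_{Z_1 \cdots Z_{i-1}} \equiv \Omega^{i-1}$ modulo $\mathrm{im}(\partial) + \mathrm{im}(\overline{\partial})$; wedging with the smooth closed form $\Omega^{d-i}$ preserves this congruence, so each summand reduces to $-(\log c_i^2)\Omega^{d-1}$. Summing and using $\sum_i \log c_i^2 = \log|\det\beta|^2 = \log|\beta|^2$ completes the diagonal case.

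For general $\beta = \delta \alpha$, the sections satisfy $Q_i'(\beta) = \sum_j (c_j Q_j) \alpha_{ji}$, so $G(\beta)$ coincides with the iterated star product for the $\mathrm{SL}_d$-twist $\alpha$ applied to the rescaled tuple $(c_j Q_j)$. Now $\mathrm{SL}_d(\R)$ is generated by elementary matrices $I + cE_{ij}$ ($i \neq j$), each of which only replaces one section by a linear combination involving another; since two sections of $\hat L$ differing by multiplication by a nonzero rational function represent the same class $\pi_0^*\widehat{\omega}_0^{-1}$ in $\widehat{\mathrm{CH}}^1$, substituting one for the other inside the iterated arithmetic intersection preserves the class in $\widehat{\mathrm{CH}}^d$. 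Hence $G(\beta) \equiv G(\delta)$ modulo rational equivalence, and the diagonal case applied to $\delta$ (with $|\det\delta| = |\det\beta|$) gives the desired identity.

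The main technical hurdle is the Poincar\'e-Lelong reduction $\delta_{Z_1 \cdots Z_k} \equiv \Omega^k$ modulo $\mathrm{im}(\partial) + \mathrm{im}(\overline{\partial})$, established inductively by a pushforward argument along each intersection stratum combined with the compatibility of $\wedge$ with $(\partial, \overline{\partial})$-exactness. A secondary subtlety is properness of the partial intersections of the twisted sections $Q_i'$, which holds for generic $\beta$ and in any case is inessential, since the arithmetic Chow class of an iterated intersection is independent of the choice of representatives within rational equivalence, allowing one to perturb $\beta$ into a generic position before carrying out the explicit star product calculation.
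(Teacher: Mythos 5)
Your proof is correct and follows essentially the same route as the paper: one first identifies $(Z,G(I))$ as the iterated arithmetic intersection of arithmetic divisors of sections of $\pi_0^*L_{M_0}^\vee$, each representing $\pi_0^*\widehat{\omega}_0^{-1}$, and then reduces a general $\beta$ to generators of $\GL_d(\R)$, tracking the star product term by term. The only differences are cosmetic --- the paper uses permutation matrices together with a scaling of the \emph{first} slot only, so the correction term comes out as $-\log\lambda^2\cdot\pi_0^*\Omega_0^{d-1}$ on the nose, whereas your full-diagonal computation additionally needs the (standard) Poincar\'e--Lelong congruence $\delta_{Z_1\cdots Z_k}\equiv \pi_0^*\Omega_0^{k}$ modulo $\mathrm{Im}(\partial)+\mathrm{Im}(\overline{\partial})$ --- and your closing suggestion to ``perturb $\beta$ into generic position'' is both unnecessary (the partial intersections are proper for every invertible $\beta$, since the twisted sections correspond to linearly independent vectors of $W$) and would in any case alter $\det\beta$ and hence the class being computed.
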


  \begin{proof}
By construction, $(Z,G)$ is the arithmetic intersection of the 
  \[
  ( \mathrm{div}(Q_i) , -\log (h(Q_i))) + (0, -\log(2\pi e^\gamma)) \in \widehat{ \mathrm{CH} }^1( N_{M_0/M} )
  \]
  as $i$ varies over $1\le i \le d$.  Each $Q_i$ is a section of $\pi_0^*L_{M_0}^\vee$, and so, recalling \eqref{shifted tautological}, each of these arithmetic divisors represents 
  \[
  \pi_0^*L_{M_0}^\vee + (0, -\log(2\pi e^\gamma))  = \pi_0^* \widehat{\omega}_0^{-1}.
  \]
Thus 
\[
(Z,G) = \underbrace{ \pi_0^*\widehat{\omega}_0^{-1} \cdots  \pi_0^*\widehat{\omega}_0^{-1}}_{d \mathrm{\ times} } 
 \in \widehat{ \mathrm{CH} }^d( N_{M_0/M} ) ,
\]
and the claim is true when $\beta$ is the identity matrix.

It now suffices to show that for any  $\alpha,\beta \in \GL_d(\R)$, we have
\[
(Z,G( \alpha \beta) ) = (Z,G(\alpha)) + (0 , -\log| \det( \beta) |^2 \cdot \Omega_0^{d-1} )
\]
in the arithmetic Chow group of $N_{M_0/M}$.
If $\beta$ is a permutation matrix, this follows from the usual associativity and commutativity of the star product (modulo currents of the form $\partial a+\overline{\partial}b)$.  The cases
\[
\beta = \begin{pmatrix}  1 & 1 \\  0& 1 \\ & & I_{d-2}   &  \end{pmatrix} 
 \quad \mbox{and} \quad 
 \beta = \begin{pmatrix}  \lambda \\ & I_{d-1}  \end{pmatrix}
\]
with $\lambda \in \R^\times$ follow immediately from the definition of the star product.  
The general case follows by writing $\beta$ as a product of such matrices.
  \end{proof}

\begin{proof}[Proof of Proposition \ref{prop:key degenerate specialization}]
The first claim of Proposition \ref{prop:key degenerate specialization} follows from Lemma \ref{lem:Zpresentation}.
For the second claim, factor $w=\beta\cdot {}^t\beta$ with $\beta\in \GL_d(\R)$ of positive determinant.
By  Lemma \ref{lem:twist taut}, it suffices to  prove the equality 
 \[
( Z,\mathfrak{g})   = (Z , G(\beta) ) 
\]
in the arithmetic Chow group of $N_{M_0/M}$.   
Thus we seek currents $a$ and $b$ on $N_{X_0/X}=N_{M_0/M} (\C) $ satisfying
\[
\mathfrak{g} +  \partial a + \overline{\partial} b = G(\beta) .
\]

To this end, we work with the pullbacks of $\mathfrak{g}$ and $G(\beta)$ via  
\[
N_{\DD_0/\DD}   \to \Gamma_g   \backslash N_{\DD_0/\DD}  \map{ z\mapsto (z,g) }  N_{X_0/X} 
\]
for a fixed  $g\in G_0(\A_f)$, as in  \eqref{normal uniformization}.
Recall  from \eqref{abusive Q} the equality 
\[
Q_i = \sigma_{\DD_0/\DD}(q_i)  \in H^0 ( N_{\DD_0/\DD} , \pi_0^* L^\vee_{\DD_0} ) .
\]
The final claim of Proposition \ref{prop:the specializations} implies that the pullback of $G(\beta)$ to $N_{\DD_0/\DD}$  is equal to
\begin{align*}\lefteqn{
[ -\log (2\pi h( \sigma_{\DD_0/\DD}(q_1')) ] \star \cdots \star [ -\log (2\pi e^\gamma h( \sigma_{\DD_0/\DD}(q_d')) ) ] } 
\hspace{4cm}    \\
& = \sigma_{\DD_0/\DD}( \mathfrak{g}^\circ (q_1')) \star \cdots \star \sigma_{\DD_0/\DD}( \mathfrak{g}^\circ (q_d')) ,
\end{align*}
where the $q'_i \in H^0(\DD, L_\DD^\vee)$ are the components of the tuple $q'= q\beta$.

It now follows from the second claim of Proposition \ref{prop:special star} that 
\begin{align*}
G(\beta) 
& = \sigma_{\DD_0/\DD}( \mathfrak{g}^\circ (q_1')) \star \cdots \star \sigma_{\DD_0/\DD}( \mathfrak{g}^\circ (q_d'))  \\
&= \sigma_{\DD_0/\DD}(\mathfrak{g}^\circ(q')) + \partial a +\overline{\partial} b
\end{align*}
for  currents 
\[
a = \sigma_{\DD_0/\DD} ( \mathfrak{a}(q') ) \quad \mbox{and}\quad b= \sigma_{\DD_0/\DD} ( \mathfrak{b}(q') )
\] 
on $N_{\DD_0/\DD}$. 
Here, by abuse of notation, the left hand side is the pullback of $G(\beta)$ to $N_{\DD_0/\DD}$.
As in \eqref{q is s}, we have the equality 
\[
q'=q\beta = s(y\beta) 
\]
of tuples of sections of $L_\DD^\vee$, which implies
\[
\mathfrak{g}^\circ(q') =  \mathfrak{g}^\circ(s(y\beta) )  \stackrel{ \eqref{alpha} }{=}  \mathfrak{g}_\DD^\circ( y,w).
\]
Putting everything together, and recalling \eqref{r degen spec}, we find
\begin{align*}
G(\beta) 
 & = \sigma_{\DD_0/\DD}(\mathfrak{g}^\circ(q')) + \partial a +\overline{\partial} b \\
& = \sigma_{\DD_0/\DD}(\mathfrak{g}_\DD^\circ(y,w)) + \partial a +\overline{\partial} b \\
& = \mathfrak{g} + \partial a + \overline{\partial} b
\end{align*}
as currents on $N_{\DD_0/\DD}$.

The only thing left to verify is that the currents $a$ and $b$ are $G_0(\Q)$-invariant, so descend to currents on $\Gamma_g \backslash N_{\DD_0/\DD} \subset N_{X_0/X}$.  This follows directly from their construction \eqref{star iterate}, as the components of the tuple $q'$ from which $a$ and $b$ are built are $G_0(\Q)$-invariant sections of the line bundle $L_\DD^\vee$ on $\DD$ (alternatively, one could have carried out the entirety of the proof  with  $\DD_0 \subset \DD$ replaced by $X_0 \subset X$ everywhere).
\end{proof}

 %%%%%%%%%%%%%%%%%%%%%%%%%%%
 
\subsection{Proof of Theorem \ref{thm:pullback}}
 
 %%%%%%%%%%%%%%%%%%%%%%%%%%%%

 Keep the notation and assumptions of Theorem \ref{thm:pullback}.  
 
Assume for the moment that $\det(T)\neq 0$.
 Using the orthogonal decomposition  \[V = V_0 \oplus W,\]
each $x\in V^d$  decomposes as $x=x_0+y$,   with $x_0\in V_0^d$ and $y\in W^d$  satisfying 
\[
T(x_0) + T(y) = T(x).
\]
For a fixed  $g\in G_0(\A_f)$ we may decompose
\eqref{component cycle} and \eqref{component green} as 
\begin{equation}\label{prep decomp cycle}
Z_\DD(T,\varphi)_g 
 =    \sum_{ \substack{ T_0 \in \Sym_d(F)  \\ y\in W ^d  \\ T_0+T(y) = T  } }  \psi(y) 
 \sum_{ \substack{ x_0\in V_0^d  \\  T(x_0) = T_0   }  }   \varphi_{0}(g^{-1} x_0)    Z_\DD(x_0+y)  
 \end{equation}
and
\begin{equation}\label{prep decomp green}
\mathfrak{g}_\DD^\circ(T,v,\varphi)_g = 
  \sum_{ \substack{  T_0 \in \Sym_d(F) \\  y\in W ^d  \\ T_0+T(y) = T  } }  \psi(y) 
 \sum_{ \substack{ x_0\in V_0^d  \\  T(x_0) = T_0   }  }     \varphi_{0}(g^{-1}x_0)      \mathfrak{g}_\DD^\circ(x_0  +y , v) . 
\end{equation}
To compute their specializations to $N_{\DD_0/\DD}$, it suffices to do so for the inner summations for  fixed $T_0$, $v$,  and $y$.

This is done by reduction to the following special case.
Suppose that for some $1\le r \le d$ we have
\[
T_0   = \begin{pmatrix}  S_0 & 0 \\ 0 & 0 \end{pmatrix}\in \Sym_{d}(F) 
\]
with  $S_0 \in \Sym_r(F)$ nonsingular, and 
\[
v   = \begin{pmatrix}  v_0 & 0 \\ 0 & w \end{pmatrix}\in \Sym_{d}(\R)
\]
with  $v_0 \in \Sym_r(\R)$ and $w \in \Sym_{d-r} (\R)$.   Let  $y\in W^d$ be any tuple such that  
\[
\mathrm{rank}(T_0+T(y))=d,
\]
and write $y=(y',y'')$ as the concatenation of $y' \in W^r$ and $y''\in W^{d-r}$.

\begin{lemma}\label{lem:key calculation}
Assume $V_0$ is anisotropic, and that 
\[
\varphi_0 = \varphi_0^{(r)} \otimes \varphi_0^{ (d-r) } \in S(\widehat{V}_0^r) \otimes S(\widehat{V}_0^{d-r})
\]
with both factors in the tensor product $\Z$-valued and $K_0$-fixed.
For any fixed $g\in G_0(\Q)$, we have the equalities 
\begin{align*}\lefteqn{ 
  \sum_{ \substack{ x_0\in V_0^d  \\  T(x_0) = T_0   }  }     \varphi_{0}(g^{-1} x_0)   \cdot  \sigma_{\DD_0/\DD} ( Z_\DD (x_0  +y ) )  }  \\
&  =
  \varphi_{0}^{(d-r)} ( 0 )  \cdot 
   \pi_0^* Z_{\DD_0} (S_0,\varphi_0 , \varphi_0^{(r)} )_g  \cdot \sigma_{\DD_0/\DD}   (  Z_\DD (y'')  )
\end{align*}
and
\begin{align*} \lefteqn{
  \sum_{ \substack{ x_0\in V_0^d  \\  T(x_0) = T_0   }  }     \varphi_{0}(g^{-1} x_0)   \cdot    \sigma_{\DD_0/D} ( \mathfrak{g}_\DD^\circ( x_0+y , v ) ) } \\
& =   
  \varphi_{0}^{(d-r)} (0)  \cdot    \pi_0^* \mathfrak{g}_{\DD_0}^\circ (S_0 ,v_0 ,\varphi_0^{(r)})_g    \star    \sigma_{\DD_0/\DD} (   \mathfrak{g}_\DD^\circ ( y'' ,w  )  )  -  \partial A_g - \overline{\partial} B_g  
\end{align*}
of cycles and currents on $N_{\DD_0/\DD}$,   for some currents  $A_g$ and $B_g$  invariant under the action of the subgroup  $\Gamma_g \subset G_0(\Q)$ from  \eqref{normal uniformization}.
 On the right hand side 
 \[
 \pi_0 : N_{\DD_0/\DD} \to \DD_0
 \]
  is the bundle map, and 
 $Z_{\DD_0} (S_0,\varphi_0^{(r)})(\C)_g$ and $ \mathfrak{g}_{\DD_0}^\circ (S_0 ,v_0 ,\varphi_0^{(r)})_g$ are the cycle and current on $\DD_0$
 defined in the same way as \eqref{component cycle} and \eqref{component green}, but with the Shimura datum $(G,\DD)$ replaced by $(G_0,\DD_0)$.
\end{lemma}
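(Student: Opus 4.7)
The strategy is to reduce the sum over $x_0$ to a sum over $V_0^r$ using anisotropy, put the resulting tuples in the framework of \S \ref{ss:explicit expansions}, and then invoke Propositions \ref{prop:intersection cycle spec} and \ref{prop:special star} term by term.

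First I would carry out the anisotropy reduction. Write $x_0 = (x_0',x_0'')$ with $x_0' \in V_0^r$ and $x_0'' \in V_0^{d-r}$. The block form of $T_0$ forces $T(x_0'')=0$; in particular $[x_{0,i}'',x_{0,i}'']=0$ for each $i$, and the hypothesis that $V_0$ is anisotropic forces $x_0''=0$. The off-diagonal block constraints on $T(x_0)=T_0$ are then automatic, leaving only the condition $T(x_0')=S_0$. Since $\varphi_0 = \varphi_0^{(r)}\otimes \varphi_0^{(d-r)}$, this reduction extracts the factor $\varphi_0^{(d-r)}(0)$ and leaves a sum over $x_0' \in V_0^r$ with $T(x_0')=S_0$, weighted by $\varphi_0^{(r)}(g^{-1}x_0')$.

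Next, for each such $x_0'$, I would decompose the $d$-tuple $x_0+y = (x_0'+y',\, y'') \in V_\sigma^d$ using the block-diagonal matrix $\alpha = \mathrm{diag}(\alpha_0,\alpha_w)$ with $\alpha_0\alpha_0^T=v_0$ and $\alpha_w\alpha_w^T=w$. Setting
\[
p = (x_0'+y')\alpha_0, \qquad q = y''\alpha_w,
\]
the tuple $s(x_0+y)\alpha$ is the concatenation $(s(p),s(q))$ of sections of $L^\vee$. The hypotheses of \S \ref{ss:explicit expansions} are then satisfied: each component of $q$ lies in $W_\sigma$ and hence gives a degenerating section vanishing on $\DD_0$; the restriction $s(p)|_{\DD_0}$ coincides with $s(x_0'\alpha_0)$ since sections coming from $W$-vectors vanish on $\DD_0$, and this restriction is smooth since $T(x_0')=S_0$ is nonsingular makes $x_0'$ linearly independent; and the full tuple $s(p,q)$ is itself smooth because $T(x_0+y)=T_0+T(y)$ has rank $d$ by hypothesis.

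With these hypotheses in hand, Proposition \ref{prop:intersection cycle spec} yields
\[
\sigma_{\DD_0/\DD}(Z(x_0+y)) = \pi_0^*(Z(x_0')|_{\DD_0}) \cdot \sigma_{\DD_0/\DD}(Z(y'')),
\]
and the combination of Proposition \ref{prop:special star} with Proposition \ref{prop:the specializations} yields
\[
\sigma_{\DD_0/\DD}(\mathfrak{g}^\circ(x_0+y,v)) = \pi_0^*\mathfrak{g}^\circ(x_0',v_0)|_{\DD_0} \star \sigma_{\DD_0/\DD}(\mathfrak{g}^\circ(y'',w)) - \partial\,\sigma(A(p;q)) - \overline{\partial}\,\sigma(B(p;q)),
\]
where I am using that $\mathfrak{g}^\circ(x_0+y,v) = \mathfrak{g}^\circ(s(p,q))$ and $\mathfrak{g}^\circ(x_0',v_0) = \mathfrak{g}^\circ(s(x_0'\alpha_0))$ by the definitions in \S \ref{ss:ortho hermitian}. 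Multiplying both formulas by $\varphi_0^{(d-r)}(0)\varphi_0^{(r)}(g^{-1}x_0')$ and summing the resulting finite sums over $x_0'$ assembles $\pi_0^*Z(S_0,\varphi_0^{(r)})(\C)_g$ and $\pi_0^*\mathfrak{g}^\circ(S_0,v_0,\varphi_0^{(r)})_g$ on the right, while the accumulated $\partial$- and $\overline{\partial}$-exact terms combine into the desired currents $A_g,B_g$ on the quotient (descent being automatic from the $K_0$-invariance of $\varphi$).

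The main obstacle will be the bookkeeping needed to line up the Garcia--Sankaran Green forms $\mathfrak{g}^\circ(-,v)$, defined via the scaling $s(x\alpha)$, with the tuple-decomposition framework of Propositions \ref{prop:intersection cycle spec} and \ref{prop:special star}, which is sensitive to how the $d$-tuple splits into a proper and a degenerate block. The fact that $v$ is block diagonal with respect to the $(r,d-r)$-split is what makes this alignment possible with a block-diagonal $\alpha$, and the verification that both $(p,q)$ is smooth and $p|_{\DD_0}$ is smooth is the essential compatibility that allows the star product on the right-hand side of the Green current formula to make sense.
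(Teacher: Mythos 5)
Your proposal is correct and follows essentially the same route as the paper: the anisotropy argument forcing $x_0''=0$, the splitting of $s(x_0+y)\alpha$ with block-diagonal $\alpha$ into a proper block $p$ restricting to $s(x_0'\alpha_0)$ on $\DD_0$ and a degenerate block $q$ built from $W$-vectors, and the term-by-term application of Propositions \ref{prop:intersection cycle spec}, \ref{prop:the specializations}, and \ref{prop:special star} before summing over $x_0'$ with $T(x_0')=S_0$. The only differences are cosmetic (your $\alpha_w$ is the paper's $\beta$, and you make the $K_0$-equivariance of the descent explicit where the paper leaves it implicit).
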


\begin{proof}
Given $x \in V^d$, abbreviate $x' \in V^r$ and $x''\in V^{d-r}$ for the tuples formed from the first $r$ and final $d-r$ components of $x$.

For any $x_0 \in V_0^d$ satisfying $T(x_0)=T_0$ we have $T(x_0') = S_0$ and $T(x_0'') =0$.  Hence  $x_0'' =0$ by our assumption that $V_0$ is anisotropic, and  $x_0+y\in V^d$ is the concatenation of $x_0'+y'\in V^r$ and $y''  \in W^{d-r} \subset V^{d-r}$.  
As in the discussion surrounding \eqref{domain section}, these tuples determine tuples of sections
\[
p = s(  x_0' +y'  )  \in H^0(\DD , L_\DD^\vee)^r
\quad \mbox{and} \quad 
q= s(  y''  )  \in H^0(\DD , L_\DD^\vee)^{d-r},
\]
whose concatenation is $(p,q)=s( x_0+y)$.
These satisfy the assumptions imposed in \S \ref{ss:explicit expansions}.  More precisely:
\begin{enumerate}
\item
The restriction 
$
p|_{\DD_0} = s(x_0') \in  H^0(\DD_0 , L_{\DD_0}^\vee)^r
$
 is  the tuple of sections  formed from $x_0'\in V_0^r$.  As $T(x_0') =S_0$ is nonsingular, this restriction is again smooth, and 
 \[
 Z_{\DD_0}( p|_{\DD_0} ) = Z_{\DD_0}(x_0').
 \]
 
 \item
 As explained in the discussion surrounding \eqref{q is s},   the components of  $q$ are degenerating sections in the sense of \S \ref{ss:explicit expansions}.
\end{enumerate}

Thus  Proposition \ref{prop:intersection cycle spec} applies, and shows that
\begin{align*}
\sigma_{\DD_0/\DD}(   Z_\DD(x_0+y) )   
& = \pi_0^*Z_{\DD_0}(x_0') \cdot  \sigma_{\DD_0/\DD}(    Z_\DD(y'')  ) .
\end{align*}
Now sum both sides of this last equality over all $x_0\in V_0^d$ with $T(x_0)=T_0$. 
As  $x_0''=0$ for every such $x_0$, that sum can be replaced by the sum over all $x_0' \in V_0^r$ satisfying $T(x_0')=S_0$.  
The result is
\begin{align*}\lefteqn{ 
  \sum_{ \substack{ x_0\in V_0^d  \\  T(x_0) = T_0   }  }     \varphi_{0}(g^{-1} x_0)   \cdot  \sigma_{\DD_0/\DD} ( Z_\DD (x_0  +y ) )  }  \\
 & =   \sum_{ \substack{ x'_0\in V_0^r  \\  T(x_0) = S_0   }  }     \varphi^{(r)}_{0}(g^{-1} x_0') \varphi^{(d-r)}_0(0)   \cdot  \pi_0^*Z_{\DD_0}(x_0') \cdot  \sigma_{\DD_0/\DD}(    Z_\DD(y'')  ),
 \end{align*}
proving the first claim of the proposition.

For the specialization of Green forms, choose the $\alpha \in \GL_d(\R)$ of \eqref{alpha} in the block diagonal form
\[
\alpha = \begin{pmatrix} \alpha_0 \\ &  \beta \end{pmatrix} 
\]
with $\alpha_0\in \Sym_r(\R)$ and $\beta\in \Sym_{d-r}(\R)$ of positive determinant.
 By definition, $\mathfrak{g}_\DD^\circ(x_0+y, v)$ is the Green current associated to the tuple of sections
 \[
 s( x_0\alpha + y\alpha) \in H^0(\DD , L_\DD^\vee)^d .
 \]
Writing this as the concatenation of $p\alpha_0$ and $q\beta$, Propositions \ref{prop:the specializations} and  \ref{prop:special star} imply
\begin{align*}
\sigma_{\DD_0/\DD} ( \mathfrak{g}_\DD^\circ( x_0  +y , v  )  ) 
 & =   \pi_0^*  \mathfrak{g}^\circ_{\DD_0}( x_0' ,v_0 )  \star   \sigma_{\DD_0/\DD}( \mathfrak{g}_\DD^\circ( y'' , w  )  )   \\
& \quad  - \partial   \sigma_{\DD_0/\DD}(  A( p \alpha_0 ; q\beta )  )      - \overline{\partial} \sigma_{\DD_0/\DD} ( B( p\alpha_0  ; q \beta  ) ) .
\end{align*}
As above, summing both sides  over all $x_0 \in V_{0}^d$ with $T(x_0) = T_0$ proves the second claim of the proposition.
\end{proof}

The proof of Theorem \ref{thm:pullback} will now proceed in two steps; first assuming $\det(T)\neq 0$, and then without this assumption.

\begin{proof}[Proof of Theorem \ref{thm:pullback}:  the nonsingular case]
Assume $\det(T)\neq 0$, so that 
\begin{align*}
\widehat{C}_M (T,v,\varphi) =
\widehat{Z}_M (T,v,\varphi) =   \big( Z_M(T,v,\varphi) , \mathfrak{g}_M^\circ(T,v,\varphi) \big)  
\end{align*}

For a given  $T_0 \in \Sym_d(F)$ and  $y\in W^d$ satisfying 
$
T_0+T(y) =T,
$
 abbreviate  
$
r=\mathrm{rank}(T_0),
$
 choose   $a \in \mathrm{GL}_d(F)$ in such a way that the matrices 
\[
{}^aT_0 = {}^ta   T_0   a  ,\qquad  {}^av= \sigma(a^{-1})  v \sigma( {}^t a^{-1} )
\]
 have the form
\begin{equation}\label{good a}
{}^aT_0   = \left( \begin{matrix}  S_0  \\ & 0_{d-r}  \end{matrix} \right) ,\qquad 
{}^av = {}^t \theta \cdot  \left( \begin{matrix}   v_0 \\ & w   \end{matrix} \right)  \cdot \theta
\end{equation}
of part (4) of Theorem \ref{thm:the classes}, with $\det(S_0)\neq 0$.
 Decompose 
\[
{}^a\varphi_0 = \sum_i  \Phi_i^{(r)}\otimes \Phi_i^ {(d-r)}  \in S(\widehat{V}_0^r) \otimes S(\widehat{V}_0^{d-r} ) 
\]
as a sum of pure tensors, with all Schwartz functions appearing here $\Z$-valued and $K_0$-fixed.

Fix a $g\in G_0(\A_f)$, and let $\Gamma_g \subset G_0(\Q)$ be the subgroup from  \eqref{normal uniformization}.
It follows from $T(x_0  a) = {}^a T(x_0)   $ that 
 \[
 \sum_{ \substack{ x_0\in V_0^d  \\  T(x_0) = T_0   }  }   \varphi_{0}(g^{-1} x_0)    Z_\DD(x_0+y)  
 =
  \sum_{ \substack{ x_0\in V_0^d  \\  T(x_0) = {}^aT_0   }  }   {}^a\varphi_{0}(g^{-1} x_0)    Z_\DD(x_0+ya)  
 \]
as $\Gamma_g$-invariant cycles on $\DD$.   Specializing both sides to $N_{\DD_0/\DD}$ and using  Lemma \ref{lem:key calculation} yields the equality
\begin{align}\lefteqn{
 \sum_{ \substack{ x_0\in V_0^d  \\  T(x_0) = T_0   }  }  
   \varphi_{0}(g^{-1} x_0)    \sigma_{\DD_0/\DD}  (   Z_\DD (x_0+y)   )   }  \nonumber \\
& = \sum_i \Phi_i^{(d-r)}( 0 )   \pi_0^* Z_{\DD_0}(     S_0  ,\Phi_i^{(r)}     )_g \cdot  \sigma_{\DD_0/\DD} ( Z_\DD ( \bar{y} ) )   \label{cycle pullback}
\end{align}
of $\Gamma_g$-invariant cycles on $N_{\DD_0/\DD}$, 
where 
\[
\bar{y} =( \bar{y}_1,\ldots, \bar{y}_{d-r} ) \in W^{d-r}
\]
 consists of the final $d-r$ components of $ya$.
The same reasoning shows that 
\begin{align}\lefteqn{
\sum_{ \substack{ x_0\in V_0^d  \\  T(x_0) = T_0   }  }  
   \varphi_{0}(g^{-1} x_0) 
\sigma_{\DD_0/D} ( \mathfrak{g}_\DD^\circ (x_0+y , v ) )  }   \nonumber  \\
& =  \sum_i \Phi_i^{(d-r)}( 0 )     \pi_0^* \mathfrak{g}_{\DD_0}^\circ(S_0 ,v_0, \Phi_i^{(r)} )  
\star  \sigma_{\DD_0/\DD}( \mathfrak{g}_\DD^\circ( \bar{y}  ,w )  )    -  \partial A - \overline{\partial} B , 
\label{green pullback}
\end{align}
where  the $\Gamma_g$-invariant currents $A$ and $B$ on $N_{\DD_0/\DD}$ depend on $T_0$, $y$, and the choice of $a$.

Now sum both sides of the equality  \eqref{cycle pullback} over all $T_0 \in \Sym_d(F)$ and  $y\in W^d$ for which $T_0 + T(y) =T$ to obtain
\begin{align*} \lefteqn{ 
 \sigma_{\DD_0/\DD}  (   Z_\DD(T,\varphi) _g  )  }  \\
 &  \stackrel{ \eqref{prep decomp cycle} }{ = } 
 \sum_{ \substack{ T_0 \in \Sym_d(F)  \\ y\in W ^d  \\ T_0+T(y) = T  } }  \psi(y) 
  \sum_{ \substack{ x_0\in V_0^d  \\  T(x_0) = T_0   }  }  
   \varphi_{0}(g^{-1} x_0)    \sigma_{\DD_0/\DD}  (   Z_\DD (x_0+y)   )        \\
&   \stackrel{  \eqref{cycle pullback}   }{ = }      
 \sum_{ \substack{ T_0 \in \Sym_d(F)  \\ y\in W ^d  \\ T_0+T(y) = T  } }  \psi(y) 
 \sum_i \Phi_i^{(d-r)}( 0 )    \pi_0^* Z_{\DD_0}(     S_0  ,\Phi_i^{(r)}     ) _g \cdot  \sigma_{\DD_0/\DD} ( Z_\DD ( \bar{y} ) )   .
\end{align*}
Note that in the inner sum the data $S_0$,  $\bar{y}$,   $r=\mathrm{rank}(T_0)$, and the Schwartz functions  $\Phi_i$  all    depend on $T_0$, $y$, and a choice of $a \in \mathrm{SL}_d(F)$ as in \eqref{good a}.
These are equalities of $\Gamma_g$-invariant analytic cycles on $N_{\DD_0/\DD}$.
By descending to $\Gamma_g \backslash N_{\DD_0/\DD} \subset N_{M_0/M}(\C)$ and then varying $g \in G_0(\A_f)$, we deduce the analogous equality 
\begin{align*} \lefteqn{ 
 \sigma_{M_0/M}  (   Z_M(T,\varphi)    )    }  \\
 &  =     
 \sum_{ \substack{ T_0 \in \Sym_d(F)  \\ y\in W ^d  \\ T_0+T(y) = T  } }  \psi(y) 
 \sum_i \Phi_i^{(d-r)}( 0 )    \pi_0^* Z_{M_0}(     S_0  ,\Phi_i^{(r)}     )   \cdot  \sigma_{X_0/X} ( Z_X ( \bar{y} ) )   .
\end{align*}
of cycles on $N_{M_0/M}$.  Here $\sigma_{X_0/X} ( Z_X ( \bar{y} ) )$ is the specialization to the normal bundle 
\begin{equation}\label{true to cone}
N_{X_0/X} \iso N_{M_0/M}(\C)
\end{equation}
 of the analytic cycle $Z_X( \bar{y} ) \subset X$ associated to $\bar{y} \in W^{d-r}$ as in  \eqref{cone pair}.
It  is algebraic and defined over the reflex field by Proposition \ref{prop:key degenerate specialization}.

The same reasoning, using \eqref{prep decomp green} and \eqref{green pullback} in place of \eqref{prep decomp cycle} and \eqref{cycle pullback}, gives the equality of currents 
\begin{align*}\lefteqn{
 \sigma_{ M_0 / M} (  \mathfrak{g}_M^\circ(T,v,\varphi)   )   }  \\
   & = 
  \sum_{ \substack{  T_0 \in \Sym_d(F) \\  y\in W ^d  \\ T_0+T(y) = T  } }  \psi(y) 
 \sum_i \Phi_i^{(d-r)}( 0 )     \pi_0^* \mathfrak{g}_{M_0}^\circ(S_0 ,v_0, \Phi_i^{(r)} )  
\star  \sigma_{X_0/X}( \mathfrak{g}_X^\circ( \bar{y}  ,w )  ) 
\end{align*}
on \eqref{true to cone}, modulo currents of the form $\partial A$ and $\overline{\partial} B$.
Here $\sigma_{X_0/X}(  \mathfrak{g}_X^\circ ( \bar{y} , w ) ) $ is the specialization to \eqref{true to cone} of the Green current  
$\mathfrak{g}_X^\circ ( \bar{y} , w )$ associated to $\bar{y} \in W^{d-r}$ and $w \in \Sym^{d-r} (\R)$ as in  \eqref{cone pair}.
As in the previous paragraph,   in the inner sum the data $S_0$,  $\bar{y}$,   $r=\mathrm{rank}(T_0)$, $v_0$, $w$, and the Schwartz functions  $\Phi_i$  all    depend on $T_0$, $y$, and a choice of $a \in \mathrm{SL}_d(F)$ as in \eqref{good a}.

Passing to the arithmetic Chow group of $N_{M_0/M}$, the above equalities show that
\begin{align}  \lefteqn{ 
\sigma_{M_0/M} ( \widehat{Z}_M(T,v,\varphi) )   }   \nonumber  \\
& = 
  \sum_{ \substack{  T_0 \in \Sym_d(F) \\ y\in W^d  \\ T_0 + T(y) = T  }  }  
 \psi(y)   \sum_i 
 \pi_0^* \widehat{Z}_{M_0}(S_0,v_0,\Phi_i^{(r)}) \cdot (Z_i ,\mathfrak{g}_i ),  \label{nondegen completion}
\end{align}
where each arithmetic cycle 
\[
( Z_i ,\mathfrak{g}_i ) = \Phi_i^{(d-r)}( 0 )  \,   \big( \sigma_{X_0/X} ( Z_X ( \bar{y} ) )   ,    \sigma_{X_0/X}( \mathfrak{g}_X^\circ( \bar{y}  ,w )  )    \big) 
\]
in the sum depends on $T_0$, $y$, and a choice of $a \in \mathrm{SL}_d(F)$ as in \eqref{good a}.

Loosely speaking, the above decomposition  \eqref{nondegen completion} separates  the parts of the specialization to the normal bundle that arise from proper intersection betwen $Z_M(T,\varphi)$ and $M_0$ from those parts that arise from improper intersection, with the improper parts corresponding to the  various $(Z_i ,\mathfrak{g}_i)$.

We now come to the central point of the proof:  Proposition \ref{prop:key degenerate specialization} tells us that   each $(Z_i,\mathfrak{g}_i)$ is equal to the pullback via $\pi_0 : N_{M_0/M} \to M_0$ of the arithmetic cycle class
\begin{align*}\lefteqn{
 \widehat{C}_{M_0} ( 0_{d-r} ,w , \Phi_i^{(d-r)} )  } \\ 
 & =  \Phi_i^{(d-r)}(0)  \cdot  \Big[ 
 \underbrace {\,  \widehat{\omega}_0^{-1} \cdots \widehat{\omega}_0^{-1} } _{d-r \mathrm{\ times}} 
 +   \big( 0 , -\log(\det(w)) \cdot  \Omega_0^{d-r-1} \big) \Big]
\end{align*}
of Proposition \ref{prop:the mod classes}, where $r=\mathrm{rank}(T_0)$.   
Hence the inner sum in \eqref{nondegen completion} simplifies to 
   \begin{align*}\lefteqn{
 \sum_i  \pi_0^* \widehat{Z}_{M_0}(S_0 , v_0 , \Phi_i^{(r)} )  \cdot    (Z_i,\mathfrak{g}_i  )  } \\
&    = \sum_i \pi_0^* \widehat{C}_{M_0}(S_0 , v_0 , \Phi_i^{(r)} )  \cdot \pi_0^*  \widehat{C}_{M_0}(0_{d-r} ,w, \Phi_i^ {(d-r)}  )  \\
& =  \pi_0^* \widehat{C}_{M_0}( {}^aT_0 , {}^av , {}^a\varphi_0 ) \\
& =   \pi_0^*\widehat{C}_{M_0}( T_0 , v , \varphi_0 ) .
  \end{align*}
Plugging this back into \eqref{nondegen completion} completes the proof of Theorem \ref{thm:pullback} when $\det(T)\neq 0$.
\end{proof}

\begin{proof}[Proof of Theorem \ref{thm:pullback}: the general case]
Now let $T\in \Sym_d(F)$ be arbitrary, and set $r=\mathrm{rank}(T)$.
Using Remark \ref{rem:good a} and Proposition \ref{prop:the mod classes}, one immediately  reduces to the case in which  
\[
T =     \begin{pmatrix}  S  \\ & 0_{d-r}   \end{pmatrix}    \quad  \mbox{and} \quad
 v =   {}^t\theta \cdot  \begin{pmatrix}   v_0 \\ & w   \end{pmatrix}  \cdot  \theta
\]
 as in part (4) of Theorem \ref{thm:the classes}, with $S\in \Sym_r(F)$  nonsingular.
 We may also assume that the factors in $\varphi = \varphi_0 \otimes \psi$ admit further factorizations
 \begin{align*}
 \varphi_0  & =\varphi^{(r)}_0\otimes \varphi^{(d-r)} _0 \in S(\widehat{V}_0^r )  \otimes S(\widehat{V}_0^{d-r} ) \\
  \psi  & =\psi^{(r)}\otimes \psi^{(d-r)}  \in S(\widehat{W}^r )  \otimes S(\widehat{W}^{d-r} ) ,
 \end{align*}
 so that Proposition \ref{prop:the mod classes} implies
\begin{equation}\label{rank splitting}
\widehat{C}_M (T,v,\varphi) = \widehat{C}_M(S , v_0,\varphi^{(r)} )   \cdot   \widehat{C}_M(0_{d-r} ,w,\varphi^{(d-r)}  ) 
\end{equation}
with $\varphi^{(r)} = \varphi_0^{(r)} \otimes \psi^{(r)}$,  and similarly with $r$ replaced by $d-r$.

It is clear from  from the definitions that   pullback via $i_0 : M_0 \to M$ satisfies
\[
i_0^* \widehat{\omega}^{-1} = \widehat{\omega}_0^{-1},
\]
and so Proposition \ref{prop:the mod classes}  and Theorem \ref{thm:arithmetic specialization} imply
\[
\sigma_{M_0/M} \big(  \widehat{C}_M( 0 _{d-r} ,w,\varphi^{(d-r} ) \big) 
=\psi^{(d-r)} (0) \cdot  \pi_0^* \widehat{C}_{M_0}( 0 _{d-r} ,r,\varphi_0^{(d-r)} ).
\]
We have already proved  Theorem \ref{thm:pullback}  for the nonsingular matrix $S$,  so
\[
\sigma_{M_0/M} \big(    \widehat{C}_M(S , v_0,\varphi^{(r)} )    \big) 
= 
 \sum_{  \substack{ S_0 \in \Sym_r(F)  \\  y \in W^r \\ S_0+T(y) = S  }  }   \psi^{(r)}(y) 
 \cdot 
 \pi_0^*  \widehat{C}_{M_0}(S_0 , v_0, \varphi_0^{(r)} )  .
\]
Specialization to the normal bundle commutes with arithmetic intersection (this is immediate from Theorem \ref{thm:arithmetic specialization} and the fact that pullbacks commute with arithmetic intersection), and so the specialization  of \eqref{rank splitting}  is equal to the pullback via $\pi_0 : N_{M_0/M} \to M_0$ of 
\begin{equation}\label{final comparison 1}
 \sum_{  \substack{ S_0 \in \Sym_r(F)  \\  y \in W^r \\ S_0+T(y) = S  }  }   \psi^{(r)}(y)  \cdot \psi^{(d-r)} (0) 
 \cdot 
  \widehat{C}_{M_0}(S_0 , v_0, \varphi_0^{(r)} )   \cdot     \widehat{C}_{M_0}(0_{d-r} ,w , \varphi_0^{(d-r)} )  .
\end{equation}

To complete the proof, we must show that \eqref{final comparison 1} is equal to
\begin{equation}\label{final comparison 2}
 \sum_{  \substack{ T_0 \in \Sym_d(F)  \\   y \in W^d   \\   T_0+T(y) = T  }    }
\psi(y)   \cdot  \widehat{C}_{M_0}(T_0,v,\varphi_0 ) .
\end{equation}
If  the $(T_0,y)$ term  in \eqref{final comparison 2} is nonzero then,  by Remark \ref{rem:classes rep}, there is an    $x\in V_0^d$  such that $T(x)=T_0$. 
The tuple  $x+y\in  V^d$ then satisfies $T(x+y) = T$, and so its  $i^\mathrm{th}$ component  is isotropic for  $r<  i \le d$.
 As we have assumed that $V$ is anisotropic, we deduce that $y$ has the form
 \[
 y=(y_1,\ldots, y_r,0,\ldots,0).
 \]
 It then follows from $T_0+T(y)=T$ that
 \[
 T_0 =     \begin{pmatrix}  S_0  \\ & 0_{d-r}   \end{pmatrix} 
 \]
 for some $S_0 \in \Sym_r(F)$, and we know from Proposition \ref{prop:the mod classes} that 
 \[
  \widehat{C}_{M_0}(T_0,v,\varphi_0 ) =  \widehat{C}_{M_0}(S_0 , v_0, \varphi_0^{(r)} )   \cdot     \widehat{C}_{M_0}(0_{d-r} ,w , \varphi_0^{(d-r)} ) ) .
 \]
   Thus  in \eqref{final comparison 2} we may replace the sum over $T_0 \in \Sym_d(F)$   with a sum over  $S_0\in \Sym_r(F)$,    replace the sum over  $y\in W^d$ with a sum over $y\in W^r$, and the result is \eqref{final comparison 1}.
 \end{proof}

\bibliographystyle{amsalpha}

%\bibliography{pullbacks.bib}

\providecommand{\bysame}{\leavevmode\hbox to3em{\hrulefill}\thinspace}
\providecommand{\MR}{\relax\ifhmode\unskip\space\fi MR }
% \MRhref is called by the amsart/book/proc definition of \MR.
\providecommand{\MRhref}[2]{%
  \href{http://www.ams.org/mathscinet-getitem?mr=#1}{#2}
}
\providecommand{\href}[2]{#2}

\end{document}